\DeclareFontFamily{U}{txsyc}{}
\DeclareFontShape{U}{txsyc}{m}{n}{
   <-> txsyc%
}{}
\DeclareFontShape{U}{txsyc}{bx}{n}{
   <-> txbsyc%
}{}
\DeclareFontShape{U}{txsyc}{l}{n}{<->ssub * txsyc/m/n}{}
\DeclareFontShape{U}{txsyc}{b}{n}{<->ssub * txsyc/bx/n}{}
\DeclareSymbolFont{symbolsC}{U}{txsyc}{m}{n}
\DeclareMathSymbol{\df}{\mathrel}{symbolsC}{"42}
\DeclareMathSymbol{\fd}{\mathrel}{symbolsC}{"43}
\DeclareMathSymbol{\lJoin}{\mathrel}{symbolsC}{"58}
\DeclareMathSymbol{\rJoin}{\mathrel}{symbolsC}{"59}
\DeclareMathOperator{\Ric}{Ric}
\newcommand{\f}[2]{\frac{#1}{#2}}
\newcommand{\cL}{\mathcal{L}}
\newcommand{\cU}{\mathcal{U}}
\newcommand{\EE}{\mathbb{E}}
\newcommand{\NN}{\mathbb{N}}
\newcommand{\PP}{\mathbb{P}}
\newcommand{\RR}{\mathbb{R}}
\newcommand{\ZZ}{\mathbb{Z}}
\newcommand{\fs}{\mathfrak{s}}
\newcommand{\di}{\displaystyle}
\newcommand{\iy}{\infty}
\newcommand{\lt}{\left}
\newcommand{\me}{\medskip}
\newcommand{\ri}{\rightarrow}
\newcommand{\rt}{\right}
\newcommand{\sm}{\smallskip}
\newcommand{\wi}{\widetilde}
\newcommand{\Vol}{\mathrm{Vol}}
\DeclareMathOperator*{\esssup}{ess\,sup}
\newcommand{\fo}{\forall\ }
\newcommand{\lVe}{\lt\Vert}
\newcommand{\rVe}{\rt\Vert}
\newcommand{\sign}{\mathrm{sign}}
\newcommand{\st}{\,:\,}
\newcommand{\un}{\mathds{1}}
\newcommand{\Var}{\mathrm{Var}}
\newcommand{\bq}{\begin{eqnarray*}}
\newcommand{\bqn}[1]{\begin{eqnarray}\label{#1}}
\newcommand{\eq}{\end{eqnarray*}}
\newcommand{\eqn}{\end{eqnarray}}
\newcommand{\wwtbp}{\par\hfill $\blacksquare$\par\me\noindent}
\newcommand{\thistitlepagestyle}{}
\newcommand{\ttsim}{\raise.17ex\hbox{$\scriptstyle\mathtt{\sim}$}}
\newcommand{\kh}{\kern .08em}
\newtheorem{pro}{Proposition} 
\newtheorem{cor}[pro]{Corollary}
\newtheorem{theo}[pro]{Theorem}
\renewcommand{\thepro}{\arabic{pro}}
\newenvironment{deff}
{\par\me\refstepcounter{pro}\noindent{\bf Definition \thepro\ }}
{\par\hfill $\square$\par\sm\noindent}
\newenvironment{rem}
{\par\me\refstepcounter{pro}\noindent{\bf Remark \thepro\ }}
{\par\hfill $\square$\par\sm\noindent}
\newcommand{\proof}{\par\me\noindent\textbf{Proof}\par\sm\noindent}
\newcommand{\prooff}[1]{\par\me\noindent\textbf{#1}\par\sm\noindent}
\newcommand{\comment}[1]{}
\title{On the separation cut-off phenomenon\\ for Brownian motions on high dimensional rotationally symmetric compact manifolds}
 \author{Marc Arnaudon${}^{(1)}$, Kol\'eh\`e Coulibaly-Pasquier${}^{(2)}$ and Laurent Miclo${}^{(3)}$
 }
 \date{\vbox{\copy0
 \copy1
 \copy2
}
 }
\begin{document}

\setbox0=\vbox{
\large
\begin{center}
${}^{(1)}$ Institut de Math\'ematiques de Bordeaux, UMR 5251\\
Universit\'e de Bordeaux, CNRS and INP Bordeaux 
\end{center}
} 
\setbox1=\vbox{
\large
\begin{center}
${}^{(2)}$ Institut \'Elie Cartan de Lorraine, UMR 7502\\
Universit\'e de Lorraine and CNRS
\end{center}
} 
\setbox2=\vbox{
\large
\begin{center}
${}^{(3)}$ Toulouse School of Economics, UMR 5314\\
Institut de Math\'ematiques de Toulouse, UMR 5219\\
CNRS and Universit\'e de Toulouse
\end{center}
} 
\setbox4=\vbox{
\hbox{marc.arnaudon@math.u-bordeaux.fr\\[1mm]}
\hbox{Institut de Math\'ematiques de Bordeaux\\}
\hbox{F.\ 33405, Talence, France}
}
\setbox5=\vbox{
\hbox{kolehe.coulibaly@univ-lorraine.fr\\[1mm]}
\hbox{Institut \'Elie Cartan de Lorraine\\}
\hbox{Universit\'e de Lorraine}
}
\setbox6=\vbox{
\hbox{miclo@math.cnrs.fr\\[1mm]}
\hbox{Toulouse School of Economics\\}
\hbox{1, Esplanade de l'université\\}
\hbox{31080 Toulouse cedex 06, France\\
}
}

\maketitle
\thistitlepagestyle
\abstract{Given a family of rotationally symmetric compact manifolds indexed by the dimension and a weight function, the goal of this paper is to investigate the cut-off phenomenon for the Brownian motions on this family. We provide a class of  compact manifolds with non-negative Ricci curvatures for which the cut-off in separation with windows occurs, in high dimension,  with different explicit mixing times. We also produce counter-examples, still with non-negative Ricci curvatures, where there are no cut-off in separation. In fact we show  a phase transition for the cut-off phenomenon concerning the Brownian motions on a rotationally symmetric compact manifolds.
 Our proof is based on a previous construction of a sharp strong stationary times by the authors, and some quantitative estimates on the two first moments of the covering time of the dual process. The concentration of measure phenomenon for the above family of manifolds appears to be relevant for the study of the corresponding cut-off.}

\vfill\null
{\small
\textbf{Keywords: } Rotationally symmetric Brownian motions, strong stationary times, separation discrepancy, hitting times.
\par
\vskip.3cm
\textbf{MSC2010:} primary: 58J65, secondary:  37A25 58J35 60J60 35K08.
\par\vskip.3cm
 \textbf{Fundings:} the grants ANR-17-EURE-0010 and AFOSR-22IOE016  are acknowledged by LM.
}\par

\newpage

\section{Introduction}
\subsection{Overview}
The main purpose of the present paper is to investigate the cut-off phenomenon in separation for Brownian motion on high  dimensional compact manifolds, especially for model space that are the rotationally symmetric compact manifolds. 
In the context of card shuffling, the cut-off phenomenon was discovered by Diaconis and Shahshahani  \cite{DS} and Aldous and Diaconis \cite{AD}. Cut-off phenomenon is an abrupt transition from out of equilibrium to equilibrium, which occurs for certain Markov processes, when the size of the state space become large. Afterward, the cut-off phenomenon has been proven for a large variety of finite Markov chains, see e.g.\ Diaconis \cite{MR1374011}, Diaconis and Fill \cite{MR1071805}, Levin, Peres and Wilmer \cite{MR2466937} and Ding, Lubetzky and Peres \cite{zbMATH05659491}.  Nevertheless the literature on the cut-off phenomenon for Markov processes  on a continuous state space is rather sparse. For example  Saloff-Coste \cite{MR1306030} has proven the cut-off phenomenon  in total variation distance for the Brownian motions on the spheres $ \mathbb {S}^n$ for high dimensions $n$, with a mixing time of order $\ln(n)/(2n)$, see also M\'eliot \cite{MR3201989} for extensions to  classical symmetric spaces of compact type. Their approach are based on complete knowledge of the spectral decomposition. It is shown in Hermon, Lacoin and  Peres \cite{zbMATH06618510} that total variation and separation cut-off are not equivalent and neither one implies the other. In a precedent paper \cite{sphere} we have shown that the cut-off in separation also occurs for the Brownian motion on the sphere of high dimensions $n$ with a mixing time of order $\ln(n)/n$. In the present paper we generalize such a result for a large class of manifold, and  as example we strengthen this result on spheres with a cut-off in separation with windows. Note that controlling the separation discrepancy is essentially (but not exactly) a $L^{\infty}$ control while the control of the total variation is a $L^1$ control. Heuristically,  the difference in the mixing times in total variation and separation comes from the fact that $L^1 $ estimates only require  the dual process to see a big part of the volume, and by concentration of measure phenomenon it is sufficient to see the ``equator'', while to get $ L^{\infty}$ estimates we have wait for the dual process to cover all the sphere, namely to reach the opposite pole, and this takes twice as long.  
 
 Our goal here is to check that there is a cut-off phenomenon in separation with windows for a large class of family of rotationally symmetric manifolds with non-negative Ricci curvature Theorems \ref{cut-off-ge} and  \ref{cut-off-ge_alpha}, including the case of spheres Corollary \ref{sphere}. We also give examples of  rotationally symmetric manifolds  with non-negative Ricci curvature where there is no cut-off in separation Theorems \ref{nocut} and \ref{cut-off-ge_alpha2}.  In fact we show  a phase transition for the cut-off phenomenon concerning the Brownian motions see Theorem \ref{theo1}. Our results are connected with those of Salez, concerning  sequences of irreducible Markov chains with symmetric support  and non-negative coarse Ricci curvature that exhibit cut-off in total variation when an additional product condition hypothesis is satisfied, see \cite{Salez} for the precise statement. 
 
  Our proof is based on two ingredients, the resort to the strong stationary times for $X_n$ presented in \cite{arnaudon:hal-03037469}
and the detailed quantitative estimates on the cover time of dual process (see \cite{zbMATH07470497}) that appear to be an one-dimensional diffusion processes in the case of rotationnaly symmetric manifolds. The concentration of volume phenomenon plays a crucial role to detect the scale    on which the cut-off phenomenon occurs.  This alternative point of view differs from the traditional approach based on spectral analysis and could  be extended to other situations where spectral information is less available.

\subsection{Geometric framework}
For $n\geq 2$, let $M_f^{n}$ be the product manifold
$ [0,L] \times \mathbb{S}^{n-1}/ \sim$, where $(r_1, \theta_1) \sim (r_2, \theta_2) $ if $(r_1, \theta_1) = (r_2, \theta_2) $ or $r_1 = r_2 = 0 $ or $r_1 = r_2 = L $,  endowed with the warping product metric  $$ ds^2 = dr \otimes dr + f^2(r) d\theta \otimes d\theta  ,$$ where $\mathbb{S}^{n-1} $ is the usual sphere of dimension $n-1$ and radius $1$, $  d\theta \otimes d\theta $ is the standard metric on the sphere and $ f $ is a regular real function that satisfies the following assumption:
\bqn{H_f} 
\lt\{\begin{array}{rcl}
f : [0,L] &\to& \mathbb{R}_{+},\\
 f(s) \sim_{0} s&,& f(L-s)\sim_{0} s \\
f^{(2k)}(0) &=& f^{(2k)}(L) = 0 ,  k \in \mathbb{Z}_{+}\\
\end{array}
\right.
\eqn
We will call such function a {\bf{weight}} function, we will assume all along the paper that $f$ is a weight function.
Later, further conditions will be required to ensure the regularity of the metric at $\wi 0\sim (0,.) $ and $\wi L \sim (L,.)$.
 The volume of the geodesic ball $B(\wi 0,r) $ in $M_f^{n}$ centered at $\wi 0$  of radius $r\in [0, L]$ is given by 
$$  \Vol_n(B(\wi 0,r)) = c_n \int_0^r f^{n-1}(s) ds ,$$
where $c_n =\frac{2\pi^{n/2}}{\Gamma(\frac{n}2)}   $ is the volume of $ \mathbb{S}^{n-1}$. The area of the geodesic sphere $\partial B(\wi 0,r) $ is $ c_n f^{n-1} (r) $ and the mean curvature of any point in $\partial B(\wi 0,r) $ is given by $(n-1)\frac{f'(r)}{f(r)}$. We have $\Ric(v) = \lt( (n-2)\frac{1-f'(r)^2}{f^2(r)} - \frac{f''(r)}{f(r)} \rt) v $  if $ v \in T \mathbb{S}^{n-1} $ and $\Ric(\partial_r) = \lt( -(n-1) \frac{f''(r)}{f(r)} \rt) \partial_r$, where $ \Ric$ denote the Ricci tensor. For a good introduction to warped products, see Chapter 3 in Petersen \cite{Petersen}.
\par\sm
Here is our main object of interest.
\begin{deff}\label{BM}
For any $n\in\NN\setminus\{1\}$,  $X_n\df(X_n(t))_{t\geq 0}$ 
 stands for the Brownian motion on  $M_f^{n}$ started at $\wi 0 $ and time-accelerated by a factor $2$,  i.e.\ the $ \Delta$-diffusion in $M_f^{n}$.
So the generator of $X_n$ is the Laplacian $ \Delta$ and not the Laplacian divided by 2 as it is sometimes more usual in Probability Theory. 
 \end{deff}
It was seen in \cite{zbMATH07470497} that $X_n$ can be intertwined with the dual process $D\df(D(t))_{t\geq 0}$ taking values in the closed balls of $M_f^{n}$ centered at $\wi 0$, starting at $\{\wi 0\}$ and absorbed in finite time $\tau_n$ in the whole set $M_f^{n}$. In  \cite{arnaudon:hal-03037469}, several couplings of  $X_n$ and $D$ were constructed, so that for any time $t\geq 0$, the conditional law of $X_n(t)$ knowing the trajectory $D({[0,t]})\df(D(s))_{s\in[0,t]}$ is the normalized uniform law over $D(t)$, which will be denoted $\Lambda(D(t),\cdot)$ in the sequel. Furthermore, $D$ is progressively measurable with respect to $X_n$, in the sense that for any $t\geq 0$, $D({[0,t]})$ depends on $X_n$ only through $X_n({[0,t]})$.
Due to these couplings and to general arguments from Diaconis and Fill \cite{MR1071805}, $\tau_n$ is a strong stationary time for $X_n$, meaning that $\tau_n$ and $X_n(\tau_n)$ are independent and $X_n(\tau_n)$ is uniformly distributed over  $M_f^{n}$.
As a consequence we have 
\bq
\fo t\geq 0,\qquad \fs(\cL(X_n(t)),\cU_{n})&\leq & \PP[\tau_n>t]\eq
where  the l.h.s.\ is the separation discrepancy between the law of $X_n(t)$ and the uniform distribution $\cU_{n}$ over $M_f^{n}$. Notice that $\cU_{n} (B(\wi 0,r)) = \frac{\int_0^r f^{n-1}(s) ds} {\int_0^L f^{n-1}(s) ds}$ for any $r\in[0,L]$.\par
Recall that the separation discrepancy between two probability measures $\mu$ and $\nu$ defined on the same measurable space
is given by
\bq
\fs(\mu,\nu)&=&\esssup_{\nu} 1-\f{d\mu}{d\nu}\eq
where $d\mu/d\nu$ is the Radon-Nikodym density of $\mu$ with respect to $\nu$. Note that $\Vert \mu- \nu \Vert_{\mathrm{tv}} \le \fs(\mu,\nu)$, where $\Vert \cdot \Vert_{\mathrm{tv}} $ stands for the total variation.
\par
\begin{rem}
Note that for any $t\in [0,\tau_n)$, the ``opposite pole'' $\wi L$ does not belong to the support of $\Lambda(D(t),\cdot)$.
It follows from an extension of Remark 2.39 of Diaconis and Fill \cite{MR1071805} that $\tau_n$ is even a sharp  strong stationary time for $X_n$, meaning
that
\bq
\fo t\geq 0,\qquad \fs(\cL(X_n(t)),\cU_{n})&= & \PP[\tau_n>t]\eq
\par
Thus the understanding of the convergence in separation of $X_n$ toward $\cU_{n}$ amounts to understanding the distribution of $\tau_n$.

\end{rem}

\subsection{Cut-off phenomenon}\label{cutoff}

For fixed $n$, the Brownian motion $X_n$ in $M_f^n$ converges in law to $ \mathcal{U}_n $, namely
$$X_n(t)  \overset{\mathcal{L}}{\to}_{t \to +\infty}  \mathcal{U}_n.$$ 
Quantifying this convergence to equilibrium is relevant when the dimension $n$ becomes large. This speed of convergence or mixing time, depends one the way the difference between the time marginal and the uniform distribution is measured.  A cut-off phenomenon in separation at time $a_n$ is a kind of phase transition, namely the separation discrepancy between $X_n $ and the equilibrium abruptly drops from the largest value $1$ to the smallest one $0$ on a small interval around $ a_n$.
More precisely, we say that
the family of diffusion processes $(X_n)_{n\in\NN\setminus\{1\}}$ has a cut-off in separation with mixing times $(a_n)_{n\in\NN\setminus\{1\}} $ and windows $(b_n)_{n\in\NN\setminus\{1\}} $ when

\bq
\forall n\ge 1, \quad 0<b_n\le a_n,
\eq
\bq
\fo r>0,\qquad\lim_{n\ri\iy} \fs(\cL(X_n(a_n+rb_n)),\cU_{n}) = \lim_{n\ri\iy}\PP\lt[\tau_n>a_n+rb_n\rt]&=&0\\
\fo r\in(0,1),\qquad \lim_{n\ri\iy} \fs(\cL(X_n(a_n-rb_n)),\cU_{n})=1-\lim_{n\ri\iy}\PP\lt[\tau_n \leq a_n-rb_n\rt]&=&1.\eq

When $\forall n\ge 1$, $b_n=a_n$, we simply say that the family of diffusion processes $(X_n)_{n\in\NN\setminus\{1\}}$ has a cut-off in separation with mixing times $(a_n)_{n\in\NN\setminus\{1\}} $.

\subsection{Intertwining relations}

Writing $B(\wi 0,R(t))\df D(t)$ for $t\in[0,\tau_n]$, it has been seen in \cite{zbMATH07470497} that $R\df(R(t))_{t\in[0,\tau_n]}$ is solution to the stochastic differential equation
\bqn{R}
\fo t\in(0,\tau_n),\qquad dR(t)&=&\sqrt{2} dB(t)+b_n(R(t)) dt\eqn
and 
\bqn{taun}
\tau_n&=& \inf\{t\geq 0\st R(t)=L\}\eqn
where $(B(t))_{t\geq 0}$ is a standard Brownian motion in $\RR$ and the mapping $b_n$ is given by
\bqn{bn}
\fo r\in(0,L),\qquad
b_n(r)&\df&2\f{f^{n-1}(r)}{\int_0^r f^{n-1}(u)\,du}-(n-1)\f{ f'(r)}{f(r)}
\eqn
It is not difficult to check  that as $r$ goes to $0_+$
\bq
b_n(r)&\sim &\f{n+1}{r}\eq
and this is sufficient to insure that 0 is an entrance boundary for $R$, so that starting from 0, it will never return to 0 at positive times.
\\

\par\sm
In the following  corollary we explicit two intertwining relations, which were constructed in \cite{arnaudon:hal-03037469} Theorems 3.5 and 4.1, enabling to deduce $\tau_n$ from the Brownian motion $X_n$ (and independent randomness for the second construction):
\begin{cor}\label{cor1}
Consider the Brownian motion $X_n\df (X_n(t))_{t\geq 0}$ in $M_f^{n}$ described in Definition \ref{BM}. For $x\in M_f^{n} \backslash\{\wi 0, \wi L\}$, denote by $N(x)$ the unit vector at $x$ normal to the sphere centred at $\wi 0$ with radius $\rho(\wi 0,x)$ where $\rho $ is the distance in $M_f^{n}$, pointing towards $\wi 0$: $N(x)=-\nabla\rho(\wi 0,\cdot)(x)$.
\begin{itemize}
\item[(1)] \textbf{Full coupling}.
 Let $D_1(t)$ be the ball in $M_f^{n}$ centred at $\wi 0$ with radius $R_1(t)$ solution started at $0$ to the It\^o equation
\bq
dR_1(t)&=&-\sqrt{2} \langle N(X_n(t)), dX_n(t))\rangle + n\left[2\frac{f'}{f}(\rho(\wi 0,X_n(t)))-\frac{f'}{f}(R_1(t))\right]\, dt
\eq
This evolution equation is considered up to  the hitting time  $\tau_n^{(1)}$ of $L$ by $R_1(t)$. 
\item[(2)] \textbf{Full decoupling, reflection of $D$ on $X_n$}.
 Let $D_2(t)$ be the ball in $M_f^{n}$ centered at $\wi 0$ with radius $R_2(t)$ solution started at $0$ to the It\^o equation
\bq
dR_2(t)&=&-\sqrt{2} dW_t +2dL_t^{R_2}[\rho(\wi 0,X_n)]- n\frac{f'}{f}(R_2(t))\, dt
\eq
where $(W_t)_{t\ge 0}$ is a real-valued Brownian motion independent of $(X_n(t))_{t\ge 0}$ and $(L_t^{R_2}[\rho(\wi 0,X_n)])_{t\in[0,\tau_n^{(2)}]}$ is the local time  at $0$ of the process $R_2-\rho(\wi 0,X_n)$. These considerations are valid up to  the hitting time  $\tau_n^{(2)}$ of $L$ by $R_2(t)$.
\end{itemize}
Let $D(t)$ be the ball  in $M_f^{n}$ centered at $\wi 0$ with radius $R(t)$, defined in~\eqref{R}, and let $\tau_n$ be the stopping time defined in~\eqref{taun}.

Then we have: 
\begin{itemize}
\item[(1)]
 for $i=1,2$ $X_n(\tau_n^{(i)})$ is uniformly distributed in $M_f^{n}$,  
\item[(2)] the pairs  $(\tau_n^{(1)}, (D_1(t))_{t\in [0,\tau_n^{(1)}]})$,  $(\tau_n^{(2)}, (D_2(t))_{t\in [0,\tau_n^{(2)}]})$ and  $(\tau_n, (D(t))_{t\in [0,\tau_n]})$ have the same law. In particular $\tau_n^{(1)}$ and $\tau_n^{(2)}$ satisfy Proposition~\ref{mean-var} and Theorem \ref{th1} below.
\end{itemize}
\end{cor}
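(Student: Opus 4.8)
The plan is to derive the statement by transcribing, to the rotationally symmetric setting, the two intertwining couplings constructed in \cite{arnaudon:hal-03037469} (Theorems~3.5 and 4.1), together with the description of the canonical dual given in \cite{zbMATH07470497}. I would first recall the abstract picture common to both constructions: a Diaconis--Fill intertwining dual of $X_n$ with link $\Lambda$ is a Markov process $(D(t))_{t\ge 0}$, here valued in the family of centred closed balls $B(\wi 0,r)$, $r\in[0,L]$, started from $\{\wi 0\}$, such that for every $t\ge 0$ the conditional law of $X_n(t)$ given the trajectory $(D(s))_{s\le t}$ equals $\Lambda(D(t),\cdot)$, the normalized uniform measure on $D(t)$; the process with radius $R$ solving \eqref{R} is such a dual, absorbed in $M_f^{n}$ at the time $\tau_n$ of \eqref{taun}. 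Then, writing $\varrho_t\df\rho(\wi 0,X_n(t))$ --- which stays in $(0,L)$ for $t>0$ since the point set $\{\wi 0,\wi L\}$ is polar for the $n$-dimensional diffusion $X_n$ ($n\ge 2$) --- I would apply It\^o's formula to the distance function $\rho(\wi 0,\cdot)$, which is smooth on $M_f^{n}\setminus\{\wi 0,\wi L\}$ with $\Vert\nabla\rho(\wi 0,\cdot)\Vert\equiv 1$ and $\Delta\rho(\wi 0,\cdot)=(n-1)f'/f(\varrho_t)$, in order to rewrite the general It\^o equations of \cite{arnaudon:hal-03037469} as exactly the stochastic differential equations for $R_1$ (driven by $X_n$ itself) and for $R_2$ (driven by the radial martingale of $X_n$ together with the independent Brownian motion $W$, with reflection term $2\,dL_t^{R_2}[\varrho]$) that appear in the statement.

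For item (1): each $D_i$ is an intertwining dual absorbed in $M_f^{n}$ at the a.s.\ finite stopping time $\tau_n^{(i)}$, so $D_i(\tau_n^{(i)})=M_f^{n}$, and the conditional law of $X_n(\tau_n^{(i)})$ given the dual trajectory is $\Lambda(M_f^{n},\cdot)=\cU_{n}$, which does not depend on the trajectory; hence $X_n(\tau_n^{(i)})$ is uniform over $M_f^{n}$. This is the strong stationary time mechanism of Diaconis and Fill \cite{MR1071805}; the only thing to check is that the intertwining identity, which holds at deterministic times, extends to the stopping time $\tau_n^{(i)}$, which I would get from the strong Markov property.

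For item (2): the point is that $(D(t))$, $(D_1(t))$ and $(D_2(t))$ are intertwining duals of $X_n$ with the \emph{same} link $\Lambda$ and the \emph{same} initial value $\{\wi 0\}$, all valued in centred balls. Identifying $B(\wi 0,r)$ with $r\in[0,L]$, the link acts on rotationally invariant $g\in C^\infty(M_f^{n})$ by $(\Lambda g)(r)=\bigl(\int_0^r g\,f^{n-1}\bigr)\big/\bigl(\int_0^r f^{n-1}\bigr)$, and as $g$ ranges over a core of $\Delta$ the functions $\Lambda g$ fill a core of a one-dimensional diffusion generator $\cG$ on $[0,L]$ absorbed at $L$; the intertwining relation $\cG\Lambda=\Lambda\Delta$ then pins down $\cG$, hence the law of the dual process and of its absorption time. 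Since that law is realised by $(B(\wi 0,R(t)))$ with $R$ solving \eqref{R} and $\tau_n$ as in \eqref{taun}, I conclude that the three pairs $(\tau_n^{(1)},(D_1(t))_{t\in[0,\tau_n^{(1)}]})$, $(\tau_n^{(2)},(D_2(t))_{t\in[0,\tau_n^{(2)}]})$ and $(\tau_n,(D(t))_{t\in[0,\tau_n]})$ have the same law; in particular $\tau_n^{(1)}$ and $\tau_n^{(2)}$ are distributed as $\tau_n$, so Proposition~\ref{mean-var} and Theorem~\ref{th1} apply to them.

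The main obstacle --- and the step really supplied by \cite{arnaudon:hal-03037469} --- is verifying that the two concrete recipes, namely coupling, respectively reflecting, the ball's radius against the running radial coordinate $\varrho_t$ of the Brownian motion, do produce bona fide intertwining duals, i.e.\ that the conditional law of $X_n(t)$ given the constructed trajectory really is the uniform measure on the current ball. This is a filtering computation: one applies It\^o's formula to test functions of $(X_n(t),R_i(t))$, takes conditional expectation against the uniform-on-ball kernel, and checks that the It\^o corrections coming from the noise shared by $\varrho$ and $R_i$ recombine --- using identities like $\int_0^r (f'/f)\,f^{n-1}=f^{n-1}(r)/(n-1)$ --- into the dual generator $\cG$. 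I would treat this as established in \cite{arnaudon:hal-03037469} and here only record the resulting equations and deduce (1)--(2) as above.
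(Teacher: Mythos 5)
Your proposal matches the paper's treatment: the paper offers no separate proof of Corollary \ref{cor1}, presenting it as a direct transcription of Theorems 3.5 and 4.1 of \cite{arnaudon:hal-03037469} together with the identification, from \cite{zbMATH07470497}, of the dual radius process as the solution of \eqref{R} absorbed at $\tau_n$ in \eqref{taun}, which is exactly the route you take. Your added explanations (the Diaconis--Fill strong stationary time mechanism for item (1), and equality in law of the three dual pairs via the common link $\Lambda$ and common initial state for item (2)) are consistent with that intended reading, the substantive verification being, as you say, the content of the cited theorems.
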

 \par\me

\subsection{Outline of the paper and main result}

 The paper is organized as follow in Section 2 we compute the Green functional of the one-dimensional diffusion associated to the radius of the dual process, and we give  a tractable formulation of all moments of $ \tau_n$, the covering time of the dual process. In Section 3, we compute the mixing time for several rotationnaly symmetric manifold and we show that depending on the shape of the weight function $f$, the  cut-off occurs in separation for the Brownian motion on $ M_f^n$ for high dimensions $n$, see  Theorem  \ref{cut-off-ge} and \ref{cut-off-ge_alpha}, or there is no cut-off in separation, see Theorem  \ref{nocut} and \ref{cut-off-ge_alpha2}. \par
 
 These results are essentially summarized by the following theorem, showing a phase transition (with respect to the parameter  $ \alpha \in (-1,+\iy)$ introduced below) for the cut-off phenomenon concerning the Brownian motions on the model $M^n_f$ for high dimensions $n$,  depending on the shape of the function $f$ at $L/2$.
 Let us first introduce, in order to simplify the exposition, another set of assumptions on $f$:
\bqn{H_f2} 
\lt\{\begin{array}{c}
\fo s\in[0,L], \qquad f(L-s) = f(s),\\
\fo s\in [0,L/2),\qquad f'(s)>0,\\
\fo s\in [0,L]\setminus \{L/2 \},\qquad f''(s)\leq 0,
\end{array}
\right.
\eqn

\begin{theo}\label{theo1}
Consider a   $C^2$ function $f$ on $[0, L]\setminus \{L/2\}$ and $C^1$ in  $[0, L]$, satisfying Assumptions \eqref{H_f} and \eqref{H_f2}.
Assume there exist $ \alpha \in (-1,+\iy)$ and $C >0$ 
such that for $h\neq 0$ small enough,
 \bqn{fsec}
 f''(L/2- h)&=& -{C}\vert h \vert^{\alpha } + o(\vert h \vert^{\alpha })  \eqn
 Let  $X_n\df(X_n(t))_{t\geq 0}$ be the Brownian motion described in Definition \ref{BM}.
\begin{itemize}
\item if  $\alpha \in (-1,0)$ then $ (X_n)_{n\in \NN\setminus\{1\}}$ has a cut-off in separation at time $ C_1/n$, with
 $$C_1 =  2 \int_0^{L/2} \frac{f(s)}{f'(s)}, $$
\item if  $\alpha = 0$  then $ (X_n)_{n\in \NN\setminus\{1\}} $ has a cut-off in separation at time $ C_2\ln(n)/n$,
with
$$C_2 =  \frac{f(L/2)}{C}, $$
\item if  $\alpha > 0$ then  $(X_n)_{n\in \NN\setminus\{1\}}$ has no cut-off in separation,
\end{itemize}
\end{theo}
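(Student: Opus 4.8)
The plan is to transfer the whole question to the one-dimensional diffusion $R$ of~\eqref{R}. By the sharp strong stationary time property recalled just before Section~\ref{cutoff}, one has $\fs(\cL(X_n(t)),\cU_{n})=\PP[\tau_n>t]$ for every $t\geq 0$, so a cut-off in separation at $a_n$ is precisely the statement that $\tau_n/a_n\to 1$ in probability, and the absence of a cut-off means that there is no scale $m_n$ for which $\tau_n/m_n\to 1$ in probability. Since $\tau_n$ is the hitting time of $L$ by $R$, whose generator is $\tfrac{d^{2}}{dr^{2}}+b_n\tfrac{d}{dr}$ with $0$ an entrance boundary, I would first compute, from the scale function and speed measure of $R$ (this is what the Green functional computation of Section~2 does), the closed iterated-integral expressions for $\EE[\tau_n]$ and $\Var(\tau_n)$ recorded in Proposition~\ref{mean-var} and Theorem~\ref{th1}. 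Everything then reduces to the asymptotics of these integrals as $n\to\iy$.

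The second step is a Laplace-type analysis built around one scale. Under~\eqref{H_f2} the weight $f^{n-1}$ concentrates at its unique maximum $L/2$, and using~\eqref{fsec} the drift of $R$ near $L/2$ is of order $n\,\lvert r-L/2\rvert^{\alpha+1}$, which balances the $\sqrt{2}$-Brownian fluctuation at the crossover scale $\varepsilon_n\asymp n^{-1/(\alpha+2)}$. I would split $[0,L]$ into the outer part $\{\lvert r-L/2\rvert\geq\delta\}$ and the inner layer $\{\lvert r-L/2\rvert\leq\delta\}$. On the outer part a Laplace expansion gives $b_n(r)\sim(n-1)f'(r)/f(r)$ for $r<L/2$ and $\sim-(n-1)f'(r)/f(r)$ for $r>L/2$, the volume term being exponentially negligible; the transport is then essentially deterministic and, using~\eqref{H_f} near the poles, contributes $\sim\tfrac{2}{n-1}\int_{0}^{L/2}\tfrac{f}{f'}$ to $\EE[\tau_n]$ when that integral converges (here $f(L-\cdot)=f(\cdot)$ is used), with fluctuations of strictly smaller order. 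Letting $\delta\downarrow 0$, the integral $\int_{0}^{L/2}f/f'$ converges exactly when $\alpha<0$, because near $L/2$ one has $f'(L/2-h)\sim\tfrac{C}{\alpha+1}h^{\alpha+1}$, hence $f/f'\sim\tfrac{(\alpha+1)f(L/2)}{C}h^{-(\alpha+1)}$; this integrability dichotomy is the source of the three regimes.

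Third comes the inner layer: using~\eqref{fsec} I would expand $\int_{0}^{r}f^{n-1}$ near $L/2$ by Laplace's method and re-evaluate $b_n$ there, where $b_n(r)\sim\tfrac{(n-1)C}{(\alpha+1)f(L/2)}\lvert r-L/2\rvert^{\alpha+1}$ in the bulk $\varepsilon_n\ll\lvert r-L/2\rvert\ll\delta$. When $\alpha<0$ the layer is traversed in time $O(\varepsilon_n^{2})=o(1/n)$, so $\EE[\tau_n]\sim C_1/n$ and $\Var(\tau_n)=o(1/n^{2})$. When $\alpha=0$ the drift near $L/2$ vanishes only linearly, and the logarithmically divergent integral $\int_{\varepsilon_n}^{\delta}dh/h$ on each side produces $\EE[\tau_n]\sim\tfrac{f(L/2)}{C}\tfrac{\ln n}{n}=C_2\tfrac{\ln n}{n}$, with $\Var(\tau_n)=O(1/n^{2})=o((\ln n/n)^{2})$, since the relevant fluctuations are on the scale $\asymp 1/n$ of the relaxation time and not of the logarithmically longer transit time through the region of vanishing drift. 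When $\alpha>0$ the outer contribution and the part of the inner layer with $\lvert r-L/2\rvert\geq\varepsilon_n$ are both $O(1/n)=o(n^{-2/(\alpha+2)})$, whereas the crossover region $\lvert r-L/2\rvert\lesssim\varepsilon_n$, where drift and noise are comparable, contributes at the order $n^{-2/(\alpha+2)}$; after centering at $L/2$ and rescaling space by $n^{1/(\alpha+2)}$ and time by $n^{2/(\alpha+2)}$, the process converges to a non-degenerate self-similar diffusion (with drift growing like $\lvert\xi\rvert^{\alpha+1}$ at infinity), so $n^{2/(\alpha+2)}\tau_n$ converges in law to a non-degenerate limit and in particular $\Var(\tau_n)\asymp\EE[\tau_n]^{2}$.

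Finally I would assemble the pieces. For $\alpha\in(-1,0)$ and $\alpha=0$, Chebyshev's inequality with $\EE[\tau_n]=a_n(1+o(1))$ and $\Var(\tau_n)=o(a_n^{2})$, where $a_n=C_1/n$ resp.\ $C_2\ln(n)/n$, yields $\PP[\tau_n>(1+r)a_n]\to 0$ for $r>0$ and $\PP[\tau_n\leq(1-r)a_n]\to 0$ for $r\in(0,1)$, i.e.\ the cut-off at $a_n$. For $\alpha>0$, the distributional convergence of $n^{2/(\alpha+2)}\tau_n$ to a law that is not a Dirac mass shows that $\tau_n/m_n\not\to 1$ in probability for any scale $m_n$, hence there is no cut-off. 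The step I expect to be the main obstacle is the inner-layer analysis: obtaining sharp, not merely order-of-magnitude, two-sided control of the crossover contribution requires matching the Laplace expansion of the Green function uniformly across the scale $\varepsilon_n$, and for $\alpha>0$ the genuine anti-concentration of $\tau_n$ --- through the identification of the limiting self-similar diffusion and the non-degeneracy of its exit time --- is considerably more delicate than the soft, transport-dominated estimates that suffice when $\alpha\leq 0$.
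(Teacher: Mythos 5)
Your proposal follows the paper's strategy faithfully for most of the argument: you reduce everything to the covering time $\tau_n$ of the one-dimensional dual diffusion $R$, use the Green-function representations of Section~2 for $\EE[\tau_n]$ and $\Var(\tau_n)$, localize by Laplace's method at the maximum $L/2$ of $f$, and read off the three regimes from the convergence or divergence of $\int_0^{L/2}f/f'$ via the local behaviour $f'(L/2-h)\sim \tfrac{C}{\alpha+1}h^{\alpha+1}$. For $\alpha\in(-1,0)$ and $\alpha=0$ this is precisely how the paper proceeds in Propositions~\ref{mean-ge-alpha},~\ref{var-ge-alpha} and Propositions~\ref{mean-ge},~\ref{var-ge}, followed by the Chebyshev argument of Theorem~\ref{th1}; your sketch is a compressed version of those computations. (A small imprecision: for $\alpha=0$ the paper gets $\Var(\tau_n)=O(\ln n/n^2)$, not $O(1/n^2)$, but both are $o((\ln n/n)^2)$ so the cut-off conclusion is unaffected.)

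Where you genuinely depart from the paper is the supercritical case $\alpha>0$. You propose to rescale space by $n^{1/(\alpha+2)}$ and time by $n^{2/(\alpha+2)}$ around $L/2$, prove a process-level scaling limit of $R$ to a self-similar diffusion, and conclude that $n^{2/(\alpha+2)}\tau_n$ converges in law to a non-degenerate random variable, so $\tau_n/m_n\not\to 1$ in probability for any $m_n$. The paper avoids this process-level limit theorem entirely. It computes the second moment ratio $\EE[\tau_n^2]/\EE[\tau_n]^2$ directly from the iterated Green formula (Proposition~\ref{calvar}), shows it converges to an explicit constant strictly bigger than $1$, and then derives non-cut-off by contradiction (Theorem~\ref{nocut}): the bound $\EE[\tau_n^k]\le k!\,\EE[\tau_n]^k$ built into Proposition~\ref{mean-var} gives uniform integrability of $(\tau_n/\EE[\tau_n])^2$ for free, so a hypothetical cut-off would force $\EE[\tau_n^2]/\EE[\tau_n]^2\to 1$. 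This route buys two things you would otherwise have to supply: it is purely integral-analytic, requiring no weak convergence of processes, and the uniform integrability needed to upgrade convergence in probability to $L^2$ convergence comes from the Green operator bound rather than from a separate tail estimate. You explicitly flag the identification of the limiting self-similar diffusion and the non-degeneracy of its exit time as the delicate step you have not carried out; that is exactly the part the paper replaces by Proposition~\ref{calvar}, using only the same Laplace expansions that already feed the mean. So your proposal identifies the correct quantities and scales throughout, but for $\alpha>0$ it leaves open a harder statement where the paper has a shorter, moment-based argument, and you should either fill in that scaling limit (including tightness and anti-concentration of the limiting exit time) or simply adopt the paper's second-moment contradiction.
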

\par
An instance where \eqref{fsec} is satisfied is when there exist $ \alpha \in (-1,+\iy)$, $C >0$ and
 $\epsilon\in (0,L/2)$ such that
  \bq
\fo h\in[-\epsilon,\epsilon],\qquad
 f(L/2 + h) &=& f(L/2) -C\vert  h \vert^{2+\alpha}.  \eq
\par
 Note the additional factor $\ln(n)$ at the critical case $\alpha=0$ for the phase transition. We conjecture that in the supercritical cases $\alpha>0$, $\tau_n/\EE[\tau_n]$ converges in distribution for large $n$ toward a particular law depending on $\alpha$,  that would reflect the fact that the larger $\alpha>0$, the more difficult is the mixing. To go toward this result, we should investigate more moments of the strong stationary times $\tau_n$ than just the two first ones, as we will do below.
\par

\section{Preliminary  results}\label{green}

Define for any $r\in[0,L]$,
\bq
 I_n(r)&\df& \int_0^r f^{n-1}(s) ds\\
  b_n(r) &\df& \f{d}{dr} \ln \lt(\frac{I_n^2(r)}{f^{n-1}(r)}\rt)
  \eq
Let $L_n := \partial^2_r + b_n(r) \partial_r $ be the generator of  $ R $   defined in \eqref{R}.
Here is our first preliminary result:
\begin{pro}\label{Poisson}
Given $ g \in C_b([0, L])$, the bounded solution $\phi_n$ of the Poisson equation
\bq
 \lt\{\begin{array}{rcl}
  L_n \phi_ n &=& -g  \\ 
  \phi_n (L) &=& 0 \\
\end{array}\rt.\eq
is given by:
 \bqn{phinr}
 \fo r\in[0,L],\qquad
  \phi_n (r) &=&  \int_r^L  \frac{f^{n-1}(t)}{I_n^2(t)}  \lt(\int_0^t  \frac{I_n^2(s)}{f^{n-1}(s)} g(s) ds \rt) dt.  
\eqn
    So the Green operator $G_n$ associated to $L_n $ is given by 
    \bq 
    \fo g\in C_b([0, L]),\,\fo r\in[0,L],\qquad
    G_n[g](r) = \int_r^L  \frac{f^{n-1}(t)}{I_n^2(t)}  \lt(\int_0^t  \frac{I_n^2(s)}{f^{n-1}(s)} g(s) ds \rt) dt.
    \eq
 \end{pro}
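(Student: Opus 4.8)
The plan is to recognise $L_n$ as a Sturm--Liouville (divergence-form) operator and integrate twice. Writing $m_n(r)\df I_n^2(r)/f^{n-1}(r)$, the very definition $b_n=(\ln m_n)'$ gives, for any $\psi\in C^2((0,L))$,
\bq
L_n\psi &=& \psi''+\f{m_n'}{m_n}\,\psi' \ =\ \f{1}{m_n}\,\f{d}{dr}\!\lt(m_n\,\psi'\rt)\ =\ \f{f^{n-1}(r)}{I_n^2(r)}\,\f{d}{dr}\!\lt(\f{I_n^2(r)}{f^{n-1}(r)}\,\psi'(r)\rt).
\eq
Hence the Poisson equation $L_n\phi_n=-g$ is equivalent, on $(0,L)$, to $\big(m_n\phi_n'\big)'=-m_n\,g$, an identity that can be integrated twice explicitly.

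First I would integrate this from $0$ to $r$; since $m_n\,g$ is bounded near $0$ (because $m_n(r)\to 0$), the flux $m_n\phi_n'$ has a finite limit $c$ at $0^+$, and this $c$ is pinned down by the requirement that $\phi_n$ be bounded near the entrance boundary $0$. Indeed, $f(s)\sim_0 s$ from \eqref{H_f} yields $m_n(r)\sim_0 r^{n+1}/n^2$, so $1/m_n$ is non-integrable at $0$; consequently, if $c\neq 0$ then $\phi_n'(r)\sim c\,n^2 r^{-(n+1)}$ near $0$ and $\phi_n$ would be unbounded there (this is exactly the entrance-boundary property already noted after \eqref{bn}). So $c=0$, and integration gives $m_n(r)\phi_n'(r)=-\int_0^r m_n(s)g(s)\,ds$, i.e.
\bq
\phi_n'(r)&=&-\,\f{f^{n-1}(r)}{I_n^2(r)}\int_0^r \f{I_n^2(s)}{f^{n-1}(s)}\,g(s)\,ds .
\eq
Integrating once more from $r$ to $L$ and using $\phi_n(L)=0$ produces precisely the formula \eqref{phinr}, and the statement about the Green operator $G_n$ is then just a rereading of this.

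Finally I would verify that \eqref{phinr} really defines a bounded solution. The only delicate point is well-definedness and boundedness near $t=L$, where the inner integral $h(t)\df\int_0^t \f{I_n^2(s)}{f^{n-1}(s)}g(s)\,ds$ diverges because $f(L)=0$; but $f(L-s)\sim_0 s$ forces the prefactor $f^{n-1}(t)/I_n^2(t)$ to vanish like $(L-t)^{n-1}$, which dominates the at most $(L-t)^{-(n-2)}$ growth of $h(t)$ (logarithmic when $n=2$), so the integrand of \eqref{phinr} is continuous on $(0,L)$ and tends to $0$ at $t=L$; near $t=0$ there is no issue since $m_n(s)\to 0$. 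Differentiating \eqref{phinr} recovers $\phi_n'=-m_n^{-1}h$, whence $L_n\phi_n=m_n^{-1}(m_n\phi_n')'=m_n^{-1}(-m_n g)=-g$, while $\phi_n(L)=0$ is immediate. Uniqueness among bounded solutions follows from the same entrance-boundary argument applied to the difference $\psi$ of two bounded solutions: $(m_n\psi')'=0$ with vanishing flux at $0$ forces $\psi'\equiv 0$, hence $\psi\equiv\psi(L)=0$ (alternatively, one may invoke the probabilistic representation $\phi_n(r)=\EE_r\big[\int_0^{\tau_n}g(R_s)\,ds\big]$, which is automatically bounded). Thus the single non-routine ingredient is the behaviour at the entrance point $0$; everything else is bookkeeping with the endpoint asymptotics of $f$ from \eqref{H_f}.
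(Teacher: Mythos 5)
Your proof is correct, but it takes a genuinely different route from the paper's. The paper's argument is a forward verification: it takes formula \eqref{phinr} as given, invokes Remark~\ref{rem-int} for integrability of the integrand at the endpoints, observes $\phi_n(L)=0$ trivially, computes $\phi_n'$ and $\phi_n''$, and plugs into $L_n\phi_n$ so that the extra terms cancel, leaving $-g$. You instead \emph{derive} the formula: you rewrite $L_n$ in Sturm--Liouville divergence form $L_n\psi=m_n^{-1}(m_n\psi')'$ with $m_n=I_n^2/f^{n-1}$, integrate once, fix the integration constant $c$ by the entrance-boundary argument at $0$ (boundedness forces zero flux because $1/m_n\sim n^2 r^{-(n+1)}$ is non-integrable there), and integrate again from $r$ to $L$ using $\phi_n(L)=0$. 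This is more conceptual---it explains where the formula comes from rather than pulling it from a hat---and it also supplies uniqueness of the bounded solution (via $(m_n\psi')'=0$ with zero flux at $0$, or the probabilistic representation), a point the proposition's phrasing ``the bounded solution'' tacitly needs but the paper's verification does not address explicitly. The endpoint asymptotics you use ($f(s)\sim_0 s$, $f(L-s)\sim_0 s$, $m_n(r)\sim_0 r^{n+1}/n^2$, and the $(L-t)^{n-1}$ decay of the prefactor against the at most $(L-t)^{-(n-2)}$ growth of the inner integral, logarithmic when $n=2$) coincide with the content of Remark~\ref{rem-int}, so the bookkeeping burden is essentially the same in both arguments.
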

   
   \begin{proof}
    
 Use the Remark \ref{rem-int} below to justify integrability of $$ [0,L]\ni  t \mapsto\frac{f^{n-1}(t)}{I_n^2(t)}   \int_0^t  \frac{I_n^2(s)}{f^{n-1}(s)} g(s) ds $$ at $ 0$ and $ L$.
For the function defined in \eqref{phinr}, we clearly have, $\phi_n (L) =0 $, and for any $r\in[0,L]$,
   $$ \phi_n' (r) = - \frac{f^{n-1}(r)}{I_n^2(r)} \int_0^r  \frac{I_n^2(s)}{f^{n-1}(s)} g(s) ds  $$
   $$ \phi_n ''(r) = - \lt(\frac{f^{n-1}}{I_n^2}\rt)' (r) \int_0^r  \frac{I_n^2(s)}{f^{n-1}(s)} g(s) ds  - g(r) .$$
 It follows that  \begin{align*} 
   L_n \phi_ n (r) &= -g(r) - \lt(\frac{f^{n-1}}{I_n^2}\rt)'(r) \int_0^r  \frac{I_n^2(s)}{f^{n-1}(s)} g(s) ds\\
   &+ \lt( \ln \frac{I_n^2}{f^{n-1}}\rt)'(r) \phi_n' (r) \\
   &= -g(r) - \lt(\frac{f^{n-1}}{I_n^2}\rt)' (r)\int_0^r  \frac{I_n^2(s)}{f^{n-1}(s)} g(s) ds\\
   &-  \lt(  \ln \frac{f^{n-1}}{I_n^2} \rt)'(r) \lt(- \frac{f^{n-1}(r)}{I_n^2(r)} \int_0^r  \frac{I_n^2(s)}{f^{n-1}(s)} g(s) ds  \rt) \\
   &=-g(r).\\
   \end{align*}
    
   \end{proof}

   \begin{rem}\label{rem-int}
   
Let us show that the  integral $  \int_0^{L}  \frac{f^{n-1}(t)}{I_n^2(t)}  \lt(\int_0^t  \frac{I_n^2(s)}{f^{n-1}(s)}  ds \rt) dt$ is finite. Since $ f(s) \sim_{s \ri 0_+} s$ we have $I_n(s) \sim_{s \ri 0_+} \frac{s^n}{n} , $ so $\frac{I_n^2(s)}{f^{n-1}(s)} \sim_{s \ri 0_+ } \frac{s^{n+1}}{n^2}$ hence $ \frac{f^{n-1}(t)}{I_n^2(t)}  \int_0^t  \frac{I_n^2(s)}{f^{n-1}(s)}  ds $ is integrable at $0$.
\par
Concerning the integrability at $L$, since $I_n(L)$ is positive and finite, it is sufficient to see that  for $\varepsilon\in (0,L)$, $ \int_\varepsilon^{L} f^{n-1}(t) \lt(\int_\varepsilon^t  \frac{1}{f^{n-1}(s)}  ds \rt) dt$ is finite and this is indeed true since $f(L-s) \sim_{s\ri 0_+} s $.\par
The above considerations further enable us to see
that 
\bq
\lim_{t\ri 0_+} \frac{1}{I_n(t)}  \int_0^t  \frac{I_n^2(s)}{f^{n-1}(s)}  ds  &=&0\eq
 justifying the following integration by parts:
\begin{align*}
\int_0^{L}  \frac{f^{n-1}(t)}{I_n^2(t)}  \lt(\int_0^t  \frac{I_n^2(s)}{f^{n-1}(s)}  ds \rt) dt 
&=\int_0^{L}  \lt(-\frac{1}{I_n(t)}\rt)'  \lt(\int_0^t  \frac{I_n^2(s)}{f^{n-1}(s)}  ds \rt) dt \\
&= - \frac{1}{I_n(L)}  \int_0^L  \frac{I_n^2(s)}{f^{n-1}(s)}  ds  +  \int_0^L  \frac{I_n(s)}{f^{n-1}(s)}  ds \\
&=   \int_0^L  \frac{I_n(s)}{f^{n-1}(s)} \lt( \frac{I_n(L)-I_n(s)}{I_n(L)}\rt)  ds \\
&= \frac{1}{\Vol_n(M)}\int_ 0^L \frac{\Vol_n(B(\wi 0,s))\Vol_n(B^c(\wi 0,s))}{\Vol_{n-1}(\partial B(\wi 0,s))} ds
\end{align*}
where $\Vol_{n-1}$ is the $(n-1)$-dimensional Hausdorff measure.
The last r.h.s.\ and the following  Proposition \ref{mean-var}  show that
 $\EE[\tau_n] \le \frac{L}{h_n(M_f^n)}$, with the Cheeger constant $h_n(M) \df\inf_{D \subseteq M, \Vol_n( D) \le \Vol_n(M) / 2} \frac {\Vol_{n-1}(\partial D)}{\Vol_{n}(D)} $.
    \end{rem}   

Let $u_{n,0}\df \un $, the constant function taking the value 1 on $[0,L]$, and consider the following sequence  $(u_{n,k})_{k\in\NN} $, defined inductively by bounded solution of

 \bqn{def-u}
\fo k\in\NN,\qquad \lt\{\begin{array}{rcl}
 L_n u_{n,k} &=& -k   u_{n,k-1}\\
    u_{n,k} (L) &=& 0 .\\
\end{array}\rt.\eqn

 We have for all $n,k \in \mathbb{Z}^+$,  $$ \frac{u_{n,k}}{k !} =  G_n^{\circ k}[\un]\df G_n[G_n[\cdots [G_n [\un]]\cdots]]. $$

\begin{pro}\label{mean-var}
We have for all $ n \ge 2 $:
\bqn{compl0} \nonumber\EE[\tau_n ] &=& u_{n,1}(0)\, =\,  \int_0^L  \frac{f^{n-1}(t)}{I_n^2(t)}  \lt(\int_0^t  \frac{I_n^2(s)}{f^{n-1}(s)}  ds \rt) dt\\
&=& \int_0^L  \frac{I_n(s)}{f^{n-1}(s)} \lt( \frac{I_n(L)-I_n(s)}{I_n(L)}\rt)  ds
\eqn
\bq \EE[\tau_n^2 ] &=& u_{n,2}(0)\, = \, 2 \int_0^L  \frac{f^{n-1}(t)}{I_n^2(t)}  \lt(\int_0^t  \frac{I_n^2(s)}{f^{n-1}(s)} u_{n,1}(s) ds \rt) dt,
 \eq
and 
more generally, for any $k\in\ZZ_+$,
 \bq  \EE[\tau_n^{k}] &=& u_{n,k}(0)\, = k! G_n^{\circ k}[\un] (0)  \eq
\end{pro}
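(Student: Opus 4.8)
The plan is to establish the identity $\EE[\tau_n^k] = u_{n,k}(0)$ for all $k\in\ZZ_+$ by induction on $k$, using the probabilistic interpretation of the Green operator $G_n$ together with the strong Markov property of the diffusion $R$. The base case $k=0$ is trivial since $u_{n,0}=\un$ and $\EE[\tau_n^0]=1$. For the inductive step, I would first record the standard fact that for a nice one-dimensional diffusion with generator $L_n$, killed upon reaching $L$, the resolvent-type identity $\EE_r\lt[\int_0^{\tau_n} g(R(t))\,dt\rt] = G_n[g](r)$ holds for bounded continuous $g$; this is exactly the content of Proposition \ref{Poisson}, reinterpreted via Dynkin's formula applied to $\phi_n(R(t\wedge\tau_n))$ (the boundary condition $\phi_n(L)=0$ and the fact that $0$ is an entrance boundary — noted in the excerpt after \eqref{bn} — guarantee there are no boundary terms, and $\EE_r[\tau_n]<\iy$ from Remark \ref{rem-int} gives uniform integrability).

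The heart of the argument is the moment recursion $\EE_r[\tau_n^k] = k\,\EE_r\lt[\int_0^{\tau_n}\EE_{R(t)}\lt[\tau_n^{k-1}\rt]\,dt\rt]$, which I would derive as follows. Write $\tau_n^k = k\int_0^{\tau_n}(\tau_n - t)^{k-1}\,dt$, apply $\EE_r$, use Fubini, and then for fixed $t<\tau_n$ condition on $\cF_t$: by the strong Markov property, $\EE_r\lt[(\tau_n-t)^{k-1}\un_{\{t<\tau_n\}}\mid\cF_t\rt] = \un_{\{t<\tau_n\}}\,\psi_{k-1}(R(t))$ where $\psi_{k-1}(y)\df\EE_y[\tau_n^{k-1}]$. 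Hence $\EE_r[\tau_n^k] = k\,\EE_r\lt[\int_0^{\tau_n}\psi_{k-1}(R(t))\,dt\rt] = k\,G_n[\psi_{k-1}](r)$ by the resolvent identity above. By the induction hypothesis $\psi_{k-1} = u_{n,k-1}$, which is bounded and continuous (solution of the Poisson equation \eqref{def-u}), so $\EE_r[\tau_n^k] = k\,G_n[u_{n,k-1}](r)$. But the defining recursion \eqref{def-u}, combined with the explicit Green formula of Proposition \ref{Poisson}, says precisely that $u_{n,k} = k\,G_n[u_{n,k-1}]$, so $\EE_r[\tau_n^k] = u_{n,k}(r)$. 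Evaluating at $r=0$ gives $\EE[\tau_n^k]=u_{n,k}(0)$, and iterating $u_{n,k}=k\,G_n[u_{n,k-1}]$ yields $u_{n,k}/k! = G_n^{\circ k}[\un]$, already recorded in the excerpt. The displayed formulas for $\EE[\tau_n]$ and $\EE[\tau_n^2]$ are then just the cases $k=1,2$ written out via \eqref{phinr}, the second chain of equalities for $\EE[\tau_n]$ being the integration by parts carried out in Remark \ref{rem-int}.

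The main obstacle is purely technical: justifying the use of Dynkin's formula and the strong Markov identity near the boundaries $0$ and $L$, and the finiteness of all moments $\EE_r[\tau_n^k]$. Finiteness follows by induction from the recursion once one knows $G_n$ maps bounded functions to bounded functions, which is Proposition \ref{Poisson} plus the integrability estimates of Remark \ref{rem-int}; one should check the integrals defining $G_n^{\circ k}[\un]$ converge at $0$ and $L$, which is immediate from the asymptotics $f(s)\sim_0 s$, $f(L-s)\sim_0 s$ exactly as in Remark \ref{rem-int}. The entrance-boundary property at $0$ removes any issue with the process returning to the singular point $\wi 0$, and near $L$ the absorption is clean; so the localization argument (stopping at $\tau_n\wedge\inf\{t: R(t)\notin[\varepsilon, L-\varepsilon]\}$ and letting $\varepsilon\downarrow 0$) goes through with dominated convergence, using $\EE_r[\tau_n]<\iy$ as the dominating bound at each stage of the induction.
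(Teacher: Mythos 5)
Your proposal is correct and follows essentially the same route as the paper: induction on $k$, an It\^o/Dynkin argument applied to the solutions of the Poisson equations \eqref{def-u} (equivalently, the occupation-time identity $\EE_r[\int_0^{\tau_n}g(R(t))\,dt]=G_n[g](r)$), the Markov property identifying $\EE_{R(t)}[\tau_n^{k-1}]$ with the conditional expectation of $(\tau_n-t)^{k-1}$, and the boundedness of $G_n$ on bounded functions from Remark \ref{rem-int} to close the induction. Your write-up merely runs the chain of equalities in the opposite direction (from moments to $u_{n,k}$ rather than from $u_{n,k}$ to moments) and spells out the localization/uniform-integrability step that the paper leaves implicit.
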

\proof
Suppose by induction that $ u_{n,k}(x)  =  \EE_x [\tau_n^k] .$ This is clearly satisfied for $k=0$.
Using It\^o's formula, we have for all $ 0\le t \leq  \tau_n$, for the process $R$ defined in \eqref{R} and starting with $R(0)=x\in[0,L]$,
$$ u_{n,k+1}(R(t)) - u_{n,k+1}(x) = - (k+1)\int_0^t u_{n,k} (R(s))  ds + M_t $$
where $ (M_t)_{t\in[0,\tau_n]}$ is a martingale. Consider this equality with $t=\tau_n$, take expectation and use the Markov property to get
 \begin{align*}
u_{n,k+1}(x)  &=  (k+1)\EE_x \lt[\int_0^{\tau_n} u_{n,k}(R(s)) ds\rt] \\
             &= (k+1)\EE_x \lt[\int_0^{\tau_n} \EE_{R(s)}[\tau_n^k] ds\rt] \\
             &= (k+1)  \EE_x \lt[\int_0^{\tau_n} (\tau_n - s)^k ds\rt] = \EE_x[\tau_n^{k+1}] .\\
\end{align*}
\par
From Remark \ref{rem-int}, $G_n(g)(r)$ is defined and bounded if $g$ is bounded. Moreover $G_n(g)(0)\ge G_n(g)(r)$ if $g\ge 0$. This implies that if $u_{n,k}(0)<\infty$, then $u_{n,k+1}$ is defined and bounded.
\wwtbp

The following characterisation of the cut-off phenomenon holds in general and in particular for the Brownian motion in $M_f^n $ with initial value $ \wi 0 $.
The underlying idea of comparing the variance and the square of the expectation of sharp strong stationary times was also used by Diaconis and Saloff-Coste \cite{MR2288715}.
\par
As usual, we say that  $ f_n = o(g_n)$ when  $\frac{f_n}{g_n} \to 0$ as $n$ goes to infinity, and  $ f_n = O(g_n)$ when there exists a constant $c$ such that $  f_n \le c g_n  $.\\
Let  $ a_n  \sim_{n \ri \infty}\EE[\tau_n] =\int_0^L  \frac{f^{n-1}(t)}{I_n^2(t)}  \lt(\int_0^t  \frac{I_n^2(s)}{f^{n-1}(s)}  ds \rt)dt $.
\begin{theo}\label{th1}

Suppose that for some sequence $(b_n)_{n\ge 1}$ we have $\forall n\ge 1$, $ 0< b_n\le a_n$, 
$$ a_n  -\EE[\tau_n] =o(b_n) \quad \hbox{and}\quad  \Var(\tau_n) = o(b_n^2), $$
 then the family of diffusion processes $(X_n)_{n\in\NN\setminus\{1\}}$ has a cut-off in separation with mixing times $(a_n)_{n\in\NN\setminus\{1\}} $ and windows  $(b_n)_{n\in\NN\setminus\{1\}} $   in the sense of Section \ref{cutoff}.

\end{theo}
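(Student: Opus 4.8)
The statement is a clean two-sided argument: an upper tail bound via Markov (or rather the first-moment/Chebyshev inequality) to get the "after $a_n$" part, and a lower tail bound via Chebyshev's inequality to get the "before $a_n$" part. Both rely on the sharp strong stationary time identity $\fs(\cL(X_n(t)),\cU_n)=\PP[\tau_n>t]$ recalled in the excerpt, so the whole matter reduces to estimating $\PP[\tau_n>a_n+rb_n]$ and $\PP[\tau_n\le a_n-rb_n]$.

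\textbf{Step 1 (upper side).} Fix $r>0$. Write $a_n+rb_n=\EE[\tau_n]+(a_n-\EE[\tau_n])+rb_n$. By hypothesis $a_n-\EE[\tau_n]=o(b_n)$, so for $n$ large $a_n-\EE[\tau_n]\ge -\tfrac r2 b_n$, hence $a_n+rb_n\ge \EE[\tau_n]+\tfrac r2 b_n$. Then by Chebyshev,
\bq
\PP[\tau_n>a_n+rb_n]\ \le\ \PP\!\lt[\tau_n-\EE[\tau_n]>\tfrac r2 b_n\rt]\ \le\ \f{4\Var(\tau_n)}{r^2 b_n^2}\ \xrightarrow[n\to\infty]{}\ 0,
\eq
using $\Var(\tau_n)=o(b_n^2)$. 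This gives the first limit in the definition of cut-off.

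\textbf{Step 2 (lower side).} Fix $r\in(0,1)$. Here $a_n-rb_n=\EE[\tau_n]+(a_n-\EE[\tau_n])-rb_n$. Since $0<b_n\le a_n$ and $a_n\sim\EE[\tau_n]$, one has $b_n\le a_n\le (1+o(1))\EE[\tau_n]$; combined with $r<1$ this yields, for $n$ large, $a_n-rb_n\le \EE[\tau_n]-\tfrac{1-r}{2}\,\EE[\tau_n]\cdot(\hbox{something positive})$ — more carefully, write $a_n-rb_n-\EE[\tau_n]=(a_n-\EE[\tau_n])-rb_n\le \tfrac r2 b_n-rb_n=-\tfrac r2 b_n<0$ for large $n$ (again using $a_n-\EE[\tau_n]=o(b_n)$). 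Thus
\bq
\PP[\tau_n\le a_n-rb_n]\ \le\ \PP\!\lt[\tau_n-\EE[\tau_n]\le -\tfrac r2 b_n\rt]\ \le\ \f{4\Var(\tau_n)}{r^2 b_n^2}\ \xrightarrow[n\to\infty]{}\ 0.
\eq
Combined with the sharpness identity this gives $\fs(\cL(X_n(a_n-rb_n)),\cU_n)=1-\PP[\tau_n\le a_n-rb_n]\to 1$, the second requirement. Finally the condition $0<b_n\le a_n$ is assumed, so all three defining properties of a cut-off in separation with mixing times $(a_n)$ and windows $(b_n)$ hold.

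\textbf{On the main obstacle.} Honestly there is no serious obstacle: this is a soft concentration argument, and the only care needed is bookkeeping with the two small quantities $a_n-\EE[\tau_n]=o(b_n)$ and $\sqrt{\Var(\tau_n)}=o(b_n)$ to make sure the shifted thresholds land on the correct side of $\EE[\tau_n]$ by a margin of order $b_n$ before applying Chebyshev. One minor point worth stating explicitly is that nothing here uses the geometry of $M_f^n$ directly — it is a general statement about sharp strong stationary times — so it is legitimate (as the theorem asserts) to phrase it "in general"; the manifold-specific content is entirely pushed into verifying the hypotheses $a_n-\EE[\tau_n]=o(b_n)$ and $\Var(\tau_n)=o(b_n^2)$, which is done later in the paper via the moment formulas of Proposition \ref{mean-var}.
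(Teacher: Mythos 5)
Your proof is correct and follows essentially the same route as the paper: apply Chebyshev's inequality on both sides of $a_n$, using $a_n-\EE[\tau_n]=o(b_n)$ to ensure the shifted thresholds sit at distance at least $\tfrac r2 b_n$ from $\EE[\tau_n]$, and $\Var(\tau_n)=o(b_n^2)$ to drive the resulting bound to zero. The paper keeps the exact denominator $(rb_n+a_n-\EE[\tau_n])^2=(rb_n+o(b_n))^2$ rather than replacing it by $(\tfrac r2 b_n)^2$, but this is only cosmetic; the digression in your Step 2 invoking $a_n\sim\EE[\tau_n]$ is unnecessary and is anyway superseded by the "more carefully" line, which is exactly the paper's computation.
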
 
\proof
Since $\tau_n$ is  a sharp  strong stationary time for $X_n$, we have 
\bq\fo t\geq 0,\qquad \fs(\cL(X_n(t)),\cU_{n}) &= &\PP\lt[\tau_n>t\rt].\eq
 Using Bienaym\'e-Tchebychev inequality we have for any $r>0$,
\bq
\PP\lt[\tau_n>a_n+rb_n\rt] &=& \PP\lt[\tau_n- \EE[\tau_n] > rb_n + a_n - \EE[\tau_n ]\rt] \\ 
&\le& \frac{\Var(\tau_n)}{(rb_n + a_n - \EE[\tau_n] )^2}\\ 
& = &\frac{o(b_n)^2}{(rb_n + o(b_n)  )^2}\ =\ o(1)\\  
\eq
\par
For the behavior before $a_n$, write for $r\in(0,1)$,
\bq
\PP[\tau_n\leq a_n-rb_n]&=&\PP[\tau_n-\EE[\tau_n]\leq a_n-rb_n-\EE[\tau_n]]\eq
and note that $a_n-rb_n-\EE[\tau_n]=-r(1+ o(1))b_n<0$ for $n$ large. Thus we get
\bq
\PP[\tau_n\leq a_n-r_nb]&\leq & \frac{\Var(\tau_n)}{(rb_n + o(b_n)  )^2} \ =\ o(1)\eq
\wwtbp

Proposition \ref{mean-var} could be used to compute the variance of $ \tau_n$, but it is not well-adapted to compute an equivalent, so let us give an alternative computation of the variance.

\begin{pro} \label{var}The variance of $\tau_n$ is given by   $$\Var(\tau_n) =  2\int_0^{L} \frac{f^{n-1}(t)}{I_n^2(t)} \int_0^t \frac{I_n^2(s)} {f^{n-1}(s)} (u_{n,1}'(s))^2 ds$$
\end{pro}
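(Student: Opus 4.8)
The plan is to derive the variance formula from the representation $\EE[\tau_n^2] = u_{n,2}(0)$ together with $\EE[\tau_n] = u_{n,1}(0)$, using the Green-operator identity $u_{n,2}/2 = G_n^{\circ 2}[\un]$ and the explicit kernel for $G_n$ from Proposition \ref{Poisson}. Since $\Var(\tau_n) = u_{n,2}(0) - u_{n,1}(0)^2$, the task reduces to writing $u_{n,1}(0)^2$ in a form that combines with $u_{n,2}(0)$. The natural device is integration by parts inside the double integral defining $u_{n,2}(0)$, transferring one derivative so that the factor $(u_{n,1}')^2$ appears. Concretely, I would start from
\bq
u_{n,2}(0) \,=\, 2\int_0^L \frac{f^{n-1}(t)}{I_n^2(t)}\lt(\int_0^t \frac{I_n^2(s)}{f^{n-1}(s)}\, u_{n,1}(s)\, ds\rt) dt,
\eq
and use $\phi_n'(r) = -\frac{f^{n-1}(r)}{I_n^2(r)}\int_0^r \frac{I_n^2(s)}{f^{n-1}(s)} g(s)\, ds$ from the proof of Proposition \ref{Poisson}: with $g = u_{n,1}$ this says the inner structure is exactly $-u_{n,2}'(r)/2$, since $u_{n,2}/2 = G_n[u_{n,1}] = \phi_n$ for that choice of $g$.

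The cleaner route is probably to work directly with the one-dimensional diffusion. First I would note $u_{n,1}' = -\frac{f^{n-1}}{I_n^2}\int_0^\cdot \frac{I_n^2}{f^{n-1}}\, ds$ (this is $\phi_n'$ with $g \equiv 1$), so that $\frac{I_n^2}{f^{n-1}}(u_{n,1}')^2 = -u_{n,1}'\int_0^\cdot \frac{I_n^2}{f^{n-1}}\,ds = u_{n,1}'\cdot\frac{I_n^2}{f^{n-1}}\cdot\frac{u_{n,1}'}{(f^{n-1}/I_n^2)}$; more usefully, $\frac{I_n^2(s)}{f^{n-1}(s)}(u_{n,1}'(s))^2 = -u_{n,1}'(s)\int_0^s\frac{I_n^2}{f^{n-1}}$. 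Then in the claimed RHS, $2\int_0^L \frac{f^{n-1}(t)}{I_n^2(t)}\int_0^t \frac{I_n^2(s)}{f^{n-1}(s)}(u_{n,1}'(s))^2\,ds\,dt$, I would integrate the inner integral by parts in $s$, writing $\frac{I_n^2(s)}{f^{n-1}(s)}(u_{n,1}'(s))^2 = \lt(-u_{n,1}'(s)\rt)\cdot u_{n,1}'(s)\cdot\frac{I_n^2(s)}{f^{n-1}(s)}$ and recognising $\frac{f^{n-1}}{I_n^2}$ as $-\frac{d}{dr}\frac{1}{I_n}\cdot\text{(something)}$—but the slicker bookkeeping is: set $h = u_{n,1}$, use $L_n h = -1$, i.e. $h'' + b_n h' = -1$, multiply by $2h$ and integrate. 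Indeed $2h h'' + 2 b_n h h' = -2h$, and since the speed/scale data give $(m_n h' )' = -m_n$ for the speed density $m_n = I_n^2/f^{n-1} \cdot(\text{const})$ appropriately, one gets $\frac{d}{dr}\lt(m_n (h^2)'\rt) - 2 m_n (h')^2 = -2 m_n h$. Integrating against the Green kernel recovers $u_{n,2}(0) - 2(\text{cross term}) = \ldots$; the cross term collapses because $h(L)=0$.

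So the key steps, in order: (i) record $u_{n,1}'(r) = -\frac{f^{n-1}(r)}{I_n^2(r)}\int_0^r \frac{I_n^2(s)}{f^{n-1}(s)}\,ds$ and the Poisson equation $L_n u_{n,1} = -1$; (ii) establish the pointwise identity $\frac{d}{dr}\lt(\frac{I_n^2(r)}{f^{n-1}(r)}\, (u_{n,1}^2)'(r)\rt) = \frac{I_n^2(r)}{f^{n-1}(r)}\lt(2(u_{n,1}'(r))^2 - 2 u_{n,1}(r)\rt)$ by expanding the derivative and using $u_{n,1}'' + b_n u_{n,1}' = -1$ with $b_n = \frac{d}{dr}\ln(I_n^2/f^{n-1})$; (iii) divide by $\frac{I_n^2}{f^{n-1}}$, integrate from $0$ to $t$ using that the boundary term at $0$ vanishes (because $u_{n,1}'$ and the speed measure are controlled near the entrance boundary, by the same estimates as in Remark \ref{rem-int}), obtaining $(u_{n,1}^2)'(t) = \frac{f^{n-1}(t)}{I_n^2(t)}\int_0^t \frac{I_n^2(s)}{f^{n-1}(s)}\lt(2(u_{n,1}'(s))^2 - 2u_{n,1}(s)\rt)ds$; (iv) integrate once more from $0$ to $L$, using $u_{n,1}(L) = 0$ so the left side gives $-u_{n,1}(0)^2$, and compare with the formulas $u_{n,1}(0) = \int_0^L\frac{f^{n-1}}{I_n^2}\int_0^t\frac{I_n^2}{f^{n-1}}$ and $u_{n,2}(0) = 2\int_0^L\frac{f^{n-1}}{I_n^2}\int_0^t\frac{I_n^2}{f^{n-1}}u_{n,1}$ from Proposition \ref{mean-var}; rearranging yields $\Var(\tau_n) = u_{n,2}(0) - u_{n,1}(0)^2 = 2\int_0^L\frac{f^{n-1}(t)}{I_n^2(t)}\int_0^t\frac{I_n^2(s)}{f^{n-1}(s)}(u_{n,1}'(s))^2\,ds\,dt$. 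The main obstacle is step (iii)—justifying that the boundary contribution at the entrance boundary $0$ genuinely vanishes; this needs the near-$0$ asymptotics $I_n(s)\sim s^n/n$, $f(s)\sim s$, hence $\frac{I_n^2}{f^{n-1}}\sim s^{n+1}/n^2$ and $u_{n,1}'(s) = O(s)$, so that $\frac{I_n^2}{f^{n-1}}(u_{n,1}^2)' = \frac{I_n^2}{f^{n-1}}\cdot 2 u_{n,1} u_{n,1}' \to 0$, exactly the kind of estimate already carried out in Remark \ref{rem-int}. Everything else is the routine expansion of a total derivative and two integrations.
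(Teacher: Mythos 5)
Your argument is correct, and it takes a genuinely different route from the paper's. The paper proves the formula stochastically: it applies It\^o's formula to $u_{n,1}(R(t))$ to exhibit $\tau_n-\EE[\tau_n]$ as the stochastic integral $\sqrt{2}\int_0^{\tau_n}u_{n,1}'(R(s))\,dB_s$, invokes the It\^o isometry to get $\Var(\tau_n)=2\,\EE\bigl[\int_0^{\tau_n}(u_{n,1}')^2(R(s))\,ds\bigr]$, and then evaluates this occupation-time functional by solving a second Poisson equation $L_n\phi_n=-2(u_{n,1}')^2$ and reading off $\phi_n(0)$ via the Green kernel. You instead work entirely on the analytic side: starting from $\Var(\tau_n)=u_{n,2}(0)-u_{n,1}(0)^2$ and the ODE $(m\,u_{n,1}')'=-m$ with speed density $m=I_n^2/f^{n-1}$, you derive the pointwise identity
\begin{equation*}
\bigl(m\,(u_{n,1}^2)'\bigr)' \;=\; 2m\bigl((u_{n,1}')^2-u_{n,1}\bigr),
\end{equation*}
integrate twice with $u_{n,1}(L)=0$ to get $-u_{n,1}(0)^2$ on the left, recognize the $u_{n,1}$ part of the right-hand side as $u_{n,2}(0)$ from Proposition~\ref{mean-var}, and rearrange. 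Both proofs rest on the same Green kernel, but the paper buys brevity by offloading work onto the martingale machinery (implicitly using optional stopping and square-integrability of the stochastic integral, which it does not dwell on), whereas your version is purely deterministic calculus with one boundary term to check; the justification you give for that term -- $m(s)=O(s^{n+1})$, $u_{n,1}'(s)=O(s)$ near $0$, so $m(u_{n,1}^2)'\to 0$ -- is sound and is exactly the kind of estimate already established in Remark~\ref{rem-int}. Your approach has the mild advantage of reusing the formula $\EE[\tau_n^2]=u_{n,2}(0)$ already proved rather than introducing a new Poisson problem, at the cost of a slightly longer calculation.
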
 

\proof
Recall that $u_{n,1} $ is the solution of 
\bq
 \lt\{\begin{array}{rcl}
 L_n u_{n,1} &=& -1\\
    u_{n,1} (L) &=& 0 \\
\end{array}\rt.\eq
Using \eqref{R}  and It\^o formula, we have:
$$ u_{n,1}(R({\tau_n})) - u_{n,1}(0) = - \tau_n + \sqrt{2} \int_0^{\tau_n}u'_{n,1}  (R(s))dB_s, $$
and since $ u_{n,1}(0) = \EE[\tau_n]$, we have 
$$ \Var( \tau_n)   = 2 \EE\lt[ \int_0^{\tau_n}(u'_{n,1})^2  (R(s))ds \rt] .$$
Let $\phi_n $ be the solution of 
\bq
 \lt\{\begin{array}{rcl}
 L_n \phi_n &=& -2 (u_{n,1}')^2\\
    \phi_n (L) &=& 0 \\
\end{array}\rt.\eq
Again by It\^o formula, we get 
$$\phi_n(R({\tau_n})) - \phi_n(0) = - 2  \int_0^{\tau_n} (u_{n,1}')^2(R(s))ds + M_{\tau_n} ,$$
where $(M_t)_{t\in[0,\tau_n]}$ is a  martingale.
After taking the expectation in the above formula, we get from  Proposition \ref{green}:
$$\Var(\tau_n)= \phi_n(0)  = 2\int_0^{L} \frac{f^{n-1}(t)}{I_n^2(t)} \lt(\int_0^t \frac{I_n^2(s)} {f^{n-1}(s)} (u_{n,1}'(s))^2 ds\rt) dt$$
\wwtbp

\section{Application to  cut-off for rotationnaly symmetric }
In this section we  derive the cut-off in separation phenomenon for a class of rotationally symmetric manifolds that contains  spheres. \par
From now on, all constants will be denoted $c$, their exact values can change from one line to another. When these constants depend on a parameter, such as $A$, we will rather write $c(A)$. 

\begin{pro}\label{mean-ge}
Let $f $ be a $C^3$ function on $[0, L]$ satisfying Assumptions \eqref{H_f} and  \eqref{H_f2}. Assume that $f''(L/2) <0 $. Denote  for $n\ge 1$ $\displaystyle a_n:=\frac{f(L/2)}{\vert f''(L/2)\vert}\frac{\ln(n)}{n}$, and let $(b_n)_{n\ge 1}$ satisfy $\displaystyle \frac{\sqrt{\ln n}}{n}=o(b_n)$ and $b_n\le a_n$. Then
\begin{equation}
\label{E0} 
\EE[ \tau_n] - a_n=o(b_n).
\end{equation}
\end{pro}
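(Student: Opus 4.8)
The plan is to read everything off the exact formula of Proposition~\ref{mean-var},
\[
\EE[\tau_n]\,=\,\int_0^L \frac{I_n(s)}{f^{n-1}(s)}\,\frac{I_n(L)-I_n(s)}{I_n(L)}\,ds
\]
and to localise the integral near the maximum point $s=L/2$ of $f$. Since $f(L-s)=f(s)$ we have $I_n(L-s)=I_n(L)-I_n(s)$, so the integrand is invariant under $s\mapsto L-s$ and $\EE[\tau_n]=2\int_0^{L/2}g_n(s)\rho_n(s)\,ds$ with $g_n(s):=I_n(s)/f^{n-1}(s)$ and $\rho_n(s):=(I_n(L)-I_n(s))/I_n(L)\in[\tfrac12,1]$. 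Writing $\sigma_n^2:=\frac{f(L/2)}{(n-1)|f''(L/2)|}$, one expects the $\ln n$ in $a_n$ to be generated by the range $s\in[L/2-\delta,\,L/2-\sigma_n]$, the remainder being $O(1/n)$-type corrections.

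First I would bound the bulk $[0,L/2-\delta]$: there $f$ is $C^1$, increasing, and $f'\ge m(\delta):=\min_{[0,L/2-\delta]}f'>0$, so from $g_n(s)=\int_0^s\bigl(f(u)/f(s)\bigr)^{n-1}du$ and $f(u)/f(s)\le e^{-\frac{m(\delta)}{f(L/2)}(s-u)}$ one gets $g_n(s)\le \frac{f(L/2)}{(n-1)m(\delta)}$ uniformly, hence (with $\rho_n\le1$) $\int_0^{L/2-\delta}g_n\rho_n = O\!\big(\frac1{n\,m(\delta)}\big)$; as $f''\le0$ forces $f'$ to be nonincreasing and $f'(L/2)=0$, $m(\delta)=f'(L/2-\delta)\sim|f''(L/2)|\delta$, so the bulk is $O\!\big(\frac1{n\delta}\big)$. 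For the window I would put $s=L/2-h$ and $\lambda(h):=\ln\bigl(f(L/2)/f(L/2-h)\bigr)\ge 0$, which is convex on $[0,L/2)$ (from $f''\le0$) and, $f$ being $C^3$ and even about $L/2$ so that $f'(L/2)=f'''(L/2)=0$, satisfies $\lambda(h)=\frac{|f''(L/2)|}{2f(L/2)}h^2+o(h^3)$, $\lambda'(h)=\frac{|f''(L/2)|}{f(L/2)}h+o(h^2)$. Then $g_n(L/2-h)=\int_h^{L/2}e^{-(n-1)(\lambda(v)-\lambda(h))}dv$, and convexity gives the clean upper bound $g_n(L/2-h)\le\frac1{(n-1)\lambda'(h)}$ together with a matching lower bound (using $\lambda(v)-\lambda(h)\le\lambda'(v)(v-h)$ and localising near $v=h$), so that $g_n(L/2-h)=(1+O(\varepsilon(\delta)))\frac{\sigma_n^2}{h}$ for $\sigma_n\le h\le\delta$ with $\varepsilon(\delta)\to0$, while $g_n(L/2-h)=O(\sigma_n)$ for $h\le\sigma_n$; together with $\rho_n(L/2-h)=\tfrac12+\frac1{2A_n}\int_0^h e^{-(n-1)\lambda(v)}\,dv\to1$ once $h/\sigma_n\to\infty$ (here $A_n:=\int_0^{L/2}e^{-(n-1)\lambda(v)}\,dv\asymp\sigma_n$), this yields
\[
2\int_0^\delta g_n(L/2-h)\rho_n(L/2-h)\,dh \,=\, \bigl(1+O(\varepsilon(\delta))\bigr)\,2\sigma_n^2\ln\tfrac{\delta}{\sigma_n}\,+\,O(\sigma_n^2).
\]
Since $2\sigma_n^2\ln(1/\sigma_n)=\sigma_n^2\ln(1/\sigma_n^2)=\sigma_n^2(\ln n+O(1))=\tfrac{n}{n-1}a_n+O(1/n)=a_n+O(1/n)$, putting the two pieces together gives $\EE[\tau_n]=a_n+O\!\big(\tfrac{\varepsilon(\delta)\ln n}{n}\big)+O\!\big(\tfrac{|\ln\delta|}{n}\big)+O\!\big(\tfrac1{n\delta}\big)$.

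The main obstacle is precisely this relative error $O(\varepsilon(\delta))$ sitting on the leading $\ln n$: with a \emph{fixed} small $\delta$ it contributes $O(\varepsilon(\delta)\ln n/n)$, which is of the same order $\ln n/n$ as $a_n$ itself and hence is \emph{not} $o(b_n)$ when $b_n$ is near its smallest admissible size $\asymp\sqrt{\ln n}/n$; a routine Laplace expansion does not suffice. I would fix this by letting $\delta=\delta_n\downarrow0$ and balancing the three error terms: the hypothesis $\frac{\sqrt{\ln n}}{n}=o(b_n)$ is exactly $nb_n\gg\sqrt{\ln n}$, and choosing for instance $\delta_n=1/\sqrt{\ln n}$ makes each of $\tfrac{\varepsilon(\delta_n)\ln n}{n}$, $\tfrac{|\ln\delta_n|}{n}=\tfrac{O(\ln\ln n)}{n}$ and $\tfrac1{n\delta_n}=\tfrac{\sqrt{\ln n}}{n}$ equal to $O(\sqrt{\ln n}/n)=o(b_n)$. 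One must also verify that the $O(1)$-constants appearing in the Laplace estimates are uniform in $\delta$ (they stem from bounded-$h/\sigma_n$ ranges and from exponentially small Gaussian tails $\int_C^\infty e^{-cx^2}dx$, which they are). Hence $\EE[\tau_n]=a_n+O(\sqrt{\ln n}/n)=a_n+o(b_n)$, which is \eqref{E0}.
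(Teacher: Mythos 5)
Your argument is correct in substance, but it organizes the Laplace analysis in the mirror-image way to the paper. The paper extracts the main term from the \emph{outer} region: it cuts $[0,L/2]$ at $L/2-A_n/\sqrt n$ with $A_n=e^{\sqrt{\ln n}}$ and at $L/2-1/\sqrt n$, proves the uniform estimate $J_n(s)=\frac{f(s)}{nf'(s)}\bigl(1+O(A_n^{-2})\bigr)$ on the outer piece (via $I_n(s)\approx f^n(s)/(nf'(s))$), integrates $\frac{f}{nf'}$ there using the expansion \eqref{E10} to get $\frac{a_n}{2}-\frac{\sqrt{\ln n}}{n}+O(1/n)$, and discards the two inner pieces, the middle strip contributing $\sim\frac{f(L/2)}{|f''(L/2)|}\frac{\sqrt{\ln n}}{n}=o(b_n)$ and the innermost $O(1/n)$. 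You instead take the main term from the \emph{shrinking window} $|s-L/2|\le\delta_n=1/\sqrt{\ln n}$ through the Gaussian/Mills-ratio approximation of $g_n$ and $\rho_n$, and discard the whole bulk by the crude uniform bound $g_n\le f(L/2)/\bigl((n-1)f'(L/2-\delta_n)\bigr)=O(1/(n\delta_n))$. Both routes sacrifice an error of exactly the critical size $\sqrt{\ln n}/n$ (the paper in its middle strip $\gamma_n$; you in $\varepsilon(\delta_n)\ln n/n$ and $1/(n\delta_n)$), which is precisely what the hypothesis $\sqrt{\ln n}/n=o(b_n)$ absorbs, so your calibration $\delta_n=1/\sqrt{\ln n}$ plays the same role as the paper's $A_n=e^{\sqrt{\ln n}}$. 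What your route buys is a self-contained local computation near the maximum (no exact integration of $f/f'$ over the bulk); what the paper's buys is that its relative error $O(A_n^{-2})$ on the leading $\ln n$ is negligible even against $1/n$, so no error sits multiplicatively on the main term.

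One local overstatement should be repaired: the claim $g_n(L/2-h)=(1+O(\varepsilon(\delta)))\,\sigma_n^2/h$ cannot hold uniformly down to $h=\sigma_n$; at $h\asymp\sigma_n$ the quantity $g_n(L/2-h)$ is essentially $\sigma_n$ times a Mills ratio at an argument of order one, which differs from $\sigma_n^2/h$ by a factor bounded away from $1$ (and your lower-bound localisation on $[h,(1+\eta)h]$ only kills the exponential truncation when $h\gg\sigma_n$). The correct uniform statement is $g_n(L/2-h)=\frac{\sigma_n^2}{h}\bigl(1+O(\varepsilon(\delta))+O(\sigma_n^2/h^2)\bigr)$, or equivalently one should split the window at $K\sigma_n$ with $K\to\infty$. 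Since $\int_{\sigma_n}^{\delta}\frac{\sigma_n^2}{h}\cdot\frac{\sigma_n^2}{h^2}\,dh=O(\sigma_n^2)=O(1/n)$, this degradation is absorbed by the $O(\sigma_n^2)$ term you already carry, so your displayed window estimate and the conclusion \eqref{E0} are unaffected.
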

\proof

From Proposition \ref{mean-var} and Remark \ref{rem-int} we get:
$\EE[\tau_n] = u_{n,1}(0)$.
Let us write, for $\displaystyle A_n:=e^{\sqrt{\ln n}}$:

\begin{align}\label{u_ge}
u_{n,1}(0)&=  \int_0^{L}  \frac{I_n(s)}{f^{n-1}(s)} \lt( \frac{I_n(L)-I_n(s)}{I_n(L)}\rt)  ds = 2\int_0^{L/2} \frac{I_n(s)}{f^{n-1}(s)}\lt( \frac{I_n(L)-I_n(s)}{I_n(L)}\rt)  ds \notag \\
&=2\left( \alpha_n+\gamma_n+\beta_n\right)
\end{align}
with 
\begin{equation}
\label{E1}
\alpha_n:= \int_0^{L/2 - A_n/\sqrt{n}}  \frac{I_n(s)}{f^{n-1}(s)} \lt( \frac{I_n(L)-I_n(s)}{I_n(L)}\rt)  ds,
\end{equation}
\begin{equation}
\label{E2}
\gamma_n:=\int_{L/2 - A_n/\sqrt{n}}^{L/2- 1/\sqrt{n} }   \frac{I_n(s)}{f^{n-1}(s)} \lt( \frac{I_n(L)-I_n(s)}{I_n(L)}\rt)  ds
\end{equation}
and 
\begin{equation}
\label{E3}
\beta_n:=
\int_{L/2 - 1/\sqrt{n}}^{L/2 }   \frac{I_n(s)}{f^{n-1}(s)} \lt( \frac{I_n(L)-I_n(s)}{I_n(L)}\rt)  ds.
\end{equation}
We will prove that 
\begin{equation}
\label{E4}
a_n-2\alpha_n=o(b_n),\quad \gamma_n=o(b_n)\quad\hbox{and}\quad \beta_n=o(b_n)
\end{equation}
and this will establish~\eqref{E0}.

\medbreak

Let us prove that $\displaystyle a_n-2\alpha_n=o(b_n)$.

 We introduce  \bqn{J_def_ge}
  J_n(s) &\df& \frac{I_n(s)}{f^{n-1}(s)} \frac{I_n(L)-I_n(s)}{I_n(L)}
=  \frac{I_n(s)}{f^{n-1}(s)} \frac{I_n(L-s)}{I_n(L)}.
\eqn

We have 
\bqn{E8}
\alpha_n:= \int_0^{L/2 - A_n/\sqrt{n}}  J_n(s) ds
\eqn

We will prove that 
\begin{equation}
\label{E6}
\forall\ s\in \left[0,L/2 - A_n/\sqrt{n}\right],\quad\left| \frac{f(s)}{n f'(s)}-J_n(s) \right|=O\left(\frac{f(s)}{n A_n^2f'(s)}\right)
\end{equation}
uniformly in $s$,
and then that 
\begin{equation}
\label{E7}
\frac1n\int_0^{L/2 - A_n/\sqrt{n}}\frac{f(s)}{ f'(s)}\, ds-\frac{a_n}{2}+\frac{\sqrt{\ln n}}{n}=O(1/n).
\end{equation}
From estimates~\eqref{E6} and~\eqref{E7} we will get  $\displaystyle a_n-2\alpha_n=o(b_n)$.

For $ s= \frac{L}{2}$, since $ f$ is increasing in $[0,L/2]$
and $ f'(L/2) = 0$ we get using Laplace's method:
\begin{align}\label{I_L_ge}
 I_n\lt(\frac{L}{2}\rt) &= \int_0^{\frac{L}{2}} f^{n-1}(t)dt = \int_0^{\frac{L}{2}} \exp((n-1) \ln (f)(t) )dt \notag \\
 & \sim_{n \ri \infty}  \sqrt{\frac{\pi }{2\vert \ln (f(s))''\vert_{s= L /2} \vert}} \frac{f^{n-1}(L/2)}{\sqrt{n-1}} \notag \\
 &\sim_{n \ri \infty}  \sqrt{  \frac{\pi f(L/2)}{2n \vert f''(L/2)\vert}} f^{n-1}(L/2).
 \end{align}
 
 If $s <\frac{L}{2}-\frac{A_n}{\sqrt n}  $ then 
we have the following expansion, since $f'>0$  on the interval $[0, L/2)$:
\begin{align}\label{I_ge}
 I_n(s) &= \int_0^s f^{n-1}(t)dt = \int_0^s \exp((n-1) \ln (f(t)) )dt \notag  \\
 & \le \frac{L}{2} \exp\left((n-1) \ln \left(f\left(\frac{L}{2}-\frac{A_n}{\sqrt n} \right)\right) \right)\notag\\
&= \frac{L}{2}f\left(\frac{L}2\right)^{n-1}\exp\left[(n-1)\ln  \left( 1+\frac{A_n^2f''(L/2)}{2nf(L/2)}+O\left(\frac{A_n^3}{n^{3/2}}\right)\right)\right]\notag\\
&= \frac{L}{2}f\left(\frac{L}2\right)^{n-1}\exp\left[\frac{A_n^2f''(L/2)}{2f(L/2)}+O\left(\frac{A_n^3}{n^{1/2}}\right)\right].
 \end{align}

This implies that 
\begin{align*}
\left|\frac{I_n(L)-I_n(s)}{I_n(L)}-1\right|=\frac{L}{2}\frac{I_n(s)}{I_n(L)}&\le 
\frac{L}{2}\frac{f\left(\frac{L}2\right)^{n-1}}{I_n(L)}\exp\left[\frac{A_n^2f''(L/2)}{2f(L/2)}+O\left(\frac{A_n^3}{n^{1/2}}\right)\right]\\
&\sim \frac{L}4\sqrt{\frac{2n|f''(L/2)|}{\pi f(L/2)}}\exp\left[\frac{A_n^2f''(L/2)}{2f(L/2)}+O\left(\frac{A_n^3}{n^{1/2}}\right)\right]\\
&\le \frac{C}{n}
\end{align*}
for some $C>0$ (this majoration will be enough for our purpose). We get
\begin{equation}
\label{E5}
1-\frac{C}{n} \le   \frac{I_n(L)-I_n(s)}{I_n(L)}\le 1.
\end{equation}
  
 
We now investigate the term $I_n(s)/f^{n-1}(s)$ of $J_n(s)$.
After integration by parts, for $ 0\le s < L/2$, and since $f'>0 $ on $[0,L/2)$, we have
\begin{align}\label{I_cal_ge}
 I_n(s) &= \int_0^s f^{n-1}(t)dt = \int_0^s \frac{f'(t)f^{n-1}(t)}{f'(t)}dt \notag \\
 &=\frac{f^{n}(s)}{nf'(s)} + \int_0^s \frac{f^{n}(t)f''(t)}{n (f'(t))^2}dt.
\end{align}
\par
Since $f'$ is decreasing and positive on $[0,L/2)$, we have for $s\in[0,L/2)$,
\bq
\fo t\in[0,s],\qquad \frac{f^{n}(t)}{(f'(t))^2}& \le& \frac{f^{n}(t)}{(f'(s))^2} ,\eq 
hence, with $ m \df \min_{[0,L/2]} f'' <0$,
\begin{equation*}
 \frac{f^n(s)}{nf'(s)} + \frac{m}{n(f'(s))^2} I_{n+1}(s)   \le I_n(s)\le \frac{f^n(s)}{nf'(s)}.
\end{equation*}
From the above equation we get
\begin{equation}\label{I_born}
 \frac{f^n(s)}{nf'(s)}  \lt(1 + \frac{m f(s)}{(n+1)(f'(s))^2}\rt)   \le I_n(s)\le \frac{f^n(s)}{nf'(s)}.
\end{equation}
Since $f$ is increasing, $f'$ is non-increasing in $(0,L/2)$ and $m<0$ we deduce that for $s\in [0,L/2 - A_n/\sqrt{n}]$:

\begin{equation}
\label{E81}
 \frac{f^n(s)}{nf'(s)}  \lt(1 - \frac{|m| f(L/2)}{(n+1)(f'(L/2 - A_n/\sqrt{n}))^2}\rt)   \le I_n(s)\le \frac{f^n(s)}{nf'(s)}.
\end{equation}
From
\begin{equation}
\label{E13}
f'\left(L/2 - A_n/\sqrt{n}\right)+\frac{A_n}{\sqrt n}f''(L/2)=O\left(A_n^2/n\right)
\end{equation}
we get 
\[
f'\left(L/2 - A_n/\sqrt{n}\right)^2-\frac{A_n^2}{ n}f''(L/2)^2=O\left(\frac{A_n^3}{n^{3/2}}\right)
\]
which yields
\[
\frac{|m| f(L/2)}{(n+1)(f'(L/2 - A_n/\sqrt{n}))^2}=O\left(\frac1{A_n^2}\right).
\]
This estimate together with~\eqref{E81} give for $s\in [0,L/2 - A_n/\sqrt{n}]$:
\bqn{E9}
0\le \frac{f(s)}{nf'(s)}-\frac{I_n(s)}{f^{n-1}(s)}\le \frac{C}{A_n^2}\frac{f(s)}{nf'(s)}
\eqn
for some $C>0$. Multiplying by $\displaystyle  \frac{I_n(L)-I_n(s)}{I_n(L)}$ and using~\eqref{E5} we get for all $s\in [0,L/2 - A_n/\sqrt{n}]$,
\bqn{10}
\left|\frac{f(s)}{nf'(s)}-J_n(s)\right|\le \frac{Cf(s)}{A_n^2 n f'(s)}
\eqn
for some $C>0$. This is~\eqref{E6}.

For proving~\eqref{E7} we remark that a Taylor expansion of $\displaystyle \frac{f(s)}{f'(s)}$ yields
on $s\in [0,L/2 )$
\begin{equation}\label{E10}
\frac{f(s)}{f'(s)}=\frac{f(L/2)}{|f''(L/2)|(L/2-s)}+g(s)
\end{equation}
with $g(s)$ uniformly bounded in  $[0,L/2 )$. So
\begin{align*}
\frac1n\int_0^{ L/2 - A_n/\sqrt{n}}\frac{f(s)}{f'(s)}\, ds&=\frac{f(L/2)}{|f''(L/2)|}\left(\frac{\ln n}{2n}-\frac{\ln A_n}n\right)+O(1/n)\\
&= \frac{a_n}2-\frac{\sqrt{\ln n}}n+O(1/n)
\end{align*}
which is~\eqref{E7}. So $a_n-2\alpha_n=o(b_n)$.

\medbreak

Next we prove that $\gamma_n=o(b_n)$. We already know as an immediate consequence of~\eqref{I_born} that on $[0, L/2-1/\sqrt{n}]$, we have 
\begin{equation}
\label{E12}
 J_n(s)\le \frac{f(s)}{nf'(s)}.
\end{equation}
 On the other hand, by~\eqref{E10}, 
$\displaystyle
\frac{f(s)}{f'(s)}-\frac{f(L/2)}{|f''(L/2)|(L/2-s)}$ is bounded in $[0, L/2)$ .Consequently,
\begin{align}\label{E11}
\frac1n\int_{L/2-A_n/\sqrt{n}}^{L/2-1/\sqrt{n}}\frac{f(s)}{f'(s)}\, ds &\sim \frac1n\int_{L/2-A_n/\sqrt{n}}^{L/2-1/\sqrt{n}}\frac{f(L/2)}{|f''(L/2)|(L/2-s)}\, ds\notag\\
&=\frac1n\frac{f(L/2)}{|f''(L/2)|}\ln(A_n)=\frac1n\frac{f(L/2)}{|f''(L/2)|}\sqrt{\ln n}=o(b_n).
\end{align}
This proves the second estimate in~\eqref{E4}.  

\medbreak

Finally we prove that $\beta_n=o(b_n)$.

 If $s \in [\frac{L}{2}-\frac{1}{\sqrt{n}}, \frac{L}{2}+\frac{1}{\sqrt{n}}]$, then write $s = L/2 + a/\sqrt{n} $, with $a \in [-1,1] $. Since $f'(L/2) = 0 $ and $f$ is $C^3$, we have uniformly in $a \in [-1,1] $:
\begin{align}
I_n(L/2 + a/\sqrt{n} ) &= I_n(L/2) + \int_{L/2}^{L/2 + a/\sqrt{n}} f^{n-1}(x)dx \notag \\
&= I_n(L/2) + \frac{1}{\sqrt{n}} \int_{0}^{a} f^{n-1}\lt(\f{L}2 + \frac{h}{\sqrt{n}}\rt)dh \notag \\
&= I_n(L/2) + \frac{1}{\sqrt{n}}\int_{0}^{a} \lt(f(L/2) + \frac{f''(L/2)h^2}{2n} +O(1/n^{3/2}) \rt)^{n-1}dh \notag \\
&\sim \frac{ f^{n-1}(L/2)}{\sqrt{n}} \lt(  \sqrt{  \frac{\pi f(L/2)}{2 \vert f''(L/2)\vert}} + \int_0^a e^{ \frac{f''(L/2)h^2}{2f(L/2)} } dh   \rt) \notag \\
&= \frac{ f^{n-1}(L/2)}{\sqrt{n}}   \int_{-\infty }^a e^{ \frac{f''(L/2)h^2}{2f(L/2)} } dh  .\label{lemme}
\end{align} 
 Hence, letting $ h(a) = \int_{-\infty }^a e^{ \frac{f''(L/2)h^2}{2f(L/2)} } dh $,  we get that  for $\beta_n$ defined in \eqref{u_ge}
 \bqn{B_n_g}
 \beta_n &\df &\int_{L/2 - 1/\sqrt{n}}^{L/2 }   \frac{I_n(s)}{f^{n-1}(s)} \lt( \frac{I_n(L)-I_n(s)}{I_n(L)}\rt)  ds \notag \\
 &\sim &\frac{1}{\sqrt{n}I_n(L)} \int_{-1}^{0} \frac{h(a)h(-a) f^{2n-2}(L/2)}  {nf^{n-1}(L/2)  e^{\frac{f''(L/2)a^2}{2f(L/2)}}  } da \notag \\
 &\sim &\frac{c}{n}\ =\ o(b_n) ,
 \eqn

where we used the following uniform estimate in $a\in[-1,1]$ obtained as in~\eqref{I_ge}
\begin{equation}
\label{E14}
f^n(L/2-a/\sqrt{n})\sim f^n(L/2) e^{\frac{f''(L/2)a^2}{2f(L/2)}},
\end{equation}
 and next \eqref{I_L_ge} for the last equivalent.

\wwtbp

\begin{pro}\label{var-ge}
Let $f $ be a $C^3$ function on $[0, L]$ satisfying Assumptions \eqref{H_f} and  \eqref{H_f2}. Assume that $f''(L/2) <0 $, then
$$ \Var(\tau_n)  = O\left( \frac{\ln n}{n^2}\right).$$
\end{pro}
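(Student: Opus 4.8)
The plan is to reduce the estimate to an $L^\infty$ bound on $u_{n,1}'$. Bounding $(u_{n,1}'(s))^2\le\Vert u_{n,1}'\Vert_\infty^2$ in the formula of Proposition~\ref{var} and then using the expression for $\EE[\tau_n]$ from Proposition~\ref{mean-var},
\[
\Var(\tau_n)\ \le\ 2\,\Vert u_{n,1}'\Vert_\infty^2\int_0^{L}\frac{f^{n-1}(t)}{I_n^2(t)}\Big(\int_0^t\frac{I_n^2(s)}{f^{n-1}(s)}\,ds\Big)dt\ =\ 2\,\Vert u_{n,1}'\Vert_\infty^2\,\EE[\tau_n].
\]
Since $\EE[\tau_n]=O(\ln n/n)$ by Proposition~\ref{mean-ge}, it then suffices to show $\Vert u_{n,1}'\Vert_\infty=O(1/\sqrt n)$.

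For this I would exploit the first order ODE hidden in $L_n u_{n,1}=-1$: writing $v\df-u_{n,1}'$, the equation $u_{n,1}''+b_n u_{n,1}'=-1$ becomes $v'+b_n v=1$ on $(0,L)$, where $b_n(r)=2\,f^{n-1}(r)/I_n(r)-(n-1)f'(r)/f(r)>0$. From the explicit formula $u_{n,1}'(r)=-\frac{f^{n-1}(r)}{I_n^2(r)}\int_0^r\frac{I_n^2(s)}{f^{n-1}(s)}\,ds$ (Proposition~\ref{Poisson} with $g\equiv1$) and the boundary behaviour $f(s)\sim_0 s$, $f(L-s)\sim_0 s$, one checks that $v\ge0$ extends continuously to $[0,L]$ with $v(0)=v(L)=0$ and $v>0$ on $(0,L)$. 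Hence $v$ attains its maximum at some interior point $s^*$, where $v'(s^*)=0$, so $b_n(s^*)v(s^*)=1$ and
\[
\Vert u_{n,1}'\Vert_\infty=v(s^*)=\frac{1}{b_n(s^*)}\ \le\ \frac{1}{\inf_{(0,L)}b_n}.
\]
Everything therefore comes down to the lower bound $\inf_{(0,L)}b_n\ge c\sqrt n$ for some $c>0$ and all $n$ large.

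To prove this I would split $(0,L)$ at distance $c'/\sqrt n$ of $L/2$, for a small fixed $c'>0$. On $(0,L/2-c'/\sqrt n]$, the bound $I_n(r)/f^{n-1}(r)\le f(r)/(nf'(r))$ from \eqref{I_born} gives $b_n(r)\ge(n+1)f'(r)/f(r)$, and since $f'$ is non-increasing on $[0,L/2)$ with $f'(L/2)=0$ and $f\in C^3$, $f'(r)\ge f'(L/2-c'/\sqrt n)=\frac{c'|f''(L/2)|}{\sqrt n}+O(1/n)$, whence $b_n(r)\ge c_1\sqrt n$. On $[L/2+c'/\sqrt n,L)$ one uses $b_n(r)>-(n-1)f'(r)/f(r)=(n-1)|f'(r)|/f(r)\ge c_2\sqrt n$ by the same Taylor estimate at $L/2$. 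On the window $|r-L/2|<c'/\sqrt n$, Laplace's method as in \eqref{I_L_ge} and \eqref{E14} yields $I_n(r)\le I_n(L/2)+\frac{c'}{\sqrt n}f^{n-1}(L/2)\le(1+o(1))\big(\sqrt{\pi f(L/2)/(2|f''(L/2)|)}+c'\big)\frac{f^{n-1}(L/2)}{\sqrt n}$ and $f^{n-1}(r)\ge(1+o(1))e^{f''(L/2)(c')^2/(2f(L/2))}f^{n-1}(L/2)$, so $2f^{n-1}(r)/I_n(r)\ge\kappa(c')\sqrt n$ with $\kappa(c')\to2\sqrt{2|f''(L/2)|/(\pi f(L/2))}>0$ as $c'\to0$; as the subtracted term $(n-1)|f'(r)|/f(r)$ is at most $(1+o(1))\frac{c'|f''(L/2)|}{f(L/2)}\sqrt n$, fixing $c'$ small enough gives $b_n(r)\ge c_3\sqrt n$ here too. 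Taking $c=\min(c_1,c_2,c_3)$ we conclude $\Var(\tau_n)\le\frac{2}{c^2 n}\EE[\tau_n]=O(\ln n/n^2)$.

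The main obstacle is precisely this uniform lower bound on $b_n$ near $L/2$: far from $L/2$ the crude bounds of $b_n$ by $f'/f$ already produce the order $\sqrt n$, but within the $1/\sqrt n$-window around $L/2$ one must see that the positive term $2f^{n-1}/I_n$ — of exact order $\sqrt n$ there by Laplace's method — beats the negative drift term, which is what pins down the cut-off scale $c'/\sqrt n$ and uses the precise Gaussian asymptotics of $I_n$ at $L/2$ established in the proof of Proposition~\ref{mean-ge}.
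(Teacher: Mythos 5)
Your proposal is correct, but it takes a genuinely different route from the paper. The paper integrates by parts to write $\Var(\tau_n)=2\int_0^L J_n(s)\,(u_{n,1}'(s))^2\,ds$ and then runs a three-region analysis around $L/2$ (bulk, window of width $A/\sqrt n$, and the far side by symmetry), using sharp pointwise asymptotics of $u_{n,1}'$ in each region; each piece is shown to be $O(\ln n/n^2)$. You instead keep only a global sup bound: from $\Var(\tau_n)\le 2\Vert u_{n,1}'\Vert_\infty^2\,\EE[\tau_n]$ together with $\EE[\tau_n]=O(\ln n/n)$ (Proposition~\ref{mean-ge}, whose hypotheses are identical and whose proof is independent, so there is no circularity), everything reduces to $\Vert u_{n,1}'\Vert_\infty=O(1/\sqrt n)$, which you obtain by evaluating the first-order equation $v'+b_nv=1$, $v=-u_{n,1}'\ge 0$, at the interior maximum of $v$ (the endpoint values $v(0)=v(L)=0$ do need the small boundary computation you allude to: $v(r)\sim r/(n+2)$ at $0$ and $v(r)\sim (L-r)/(n-2)$ at $L$), giving $\Vert u_{n,1}'\Vert_\infty\le 1/\inf_{(0,L)}b_n$. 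Your drift bound $\inf_{(0,L)}b_n\ge c\sqrt n$ is correct: away from $L/2$ it follows from \eqref{I_born} and the monotonicity of $f'$ exactly as you say, and in the $c'/\sqrt n$-window the Laplace asymptotics \eqref{I_L_ge}, \eqref{E14} make the repulsive term $2f^{n-1}/I_n$ of order $\sqrt n$ with a constant that stays above the drift term $(n-1)|f'|/f\le (1+o(1))c'|f''(L/2)|\sqrt n/f(L/2)$ once $c'$ is fixed small. What each approach buys: yours is shorter and more robust, needing only order-of-magnitude control and isolating the probabilistic mechanism (the radius process is pushed toward $L$ with drift at least of order $\sqrt n$ everywhere); the paper's heavier computation yields pointwise control of $u_{n,1}'$ and sets up the decomposition that is reused later for the exact variance asymptotics in Proposition~\ref{calvar}, where a sup-norm bound of this kind would not be sharp enough.
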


\proof
From Proposition \ref{var} and after integration by parts, it follows that  for all $A$ large enough and for all $ n $ large enough:
\begin{align}
 \frac{\Var(\tau_n)}{2} &=  \int_0^{L} \frac{f^{n-1}(t)}{I_n^2(t)} \int_0^t \frac{I_n^2(s)} {f^{n-1}(s)} (u_{n,1}'(s))^2 ds \notag \\
 &=  \lt[ -\frac{1}{I_n(t)} \int_0^t \frac{I_n^2(s)}{f^{n-1}(s)}(u_{n,1}'(s))^2  ds \rt]_0^{L}\notag + \int_0^{L}\frac{I_n(s)}{f^{n-1}(s)}(u_{n,1}'(s))^2 ds\notag \\
 &= \int_0^{L}\frac{I_n(s)}{f^{n-1}(s)} \frac{I_n(L)-I_n(s)}{I_n(L)}(u_{n,1}'(s))^2 ds \notag \\
 &= \underbrace{\int_0^{L/2- A/\sqrt{n}} J_n(s) (u_{n,1}'(s))^2 ds}_{A_n}
 + \underbrace{\int_{L/2- A/\sqrt{n}}^{L/2+ A/\sqrt{n}}J_n(s) (u_{n,1}'(s))^2 ds}_{B_n}\\
& + \underbrace{\int_{L/2+ A/\sqrt{n}}^{L} J_n(s) (u_{n,1}'(s))^2 ds}_{C_n} \label{**_ge_k1} \\
 \notag
\end{align} 
We will analyze the three last terms separately.
Recall that by Proposition \ref{Poisson} and from the fact that $u_{n,1}=G_n[\un]$, we have 
\bqn{dv}(u_{n,1}'(t))^2 &= &\lt( \frac{f^{n-1}(t)}{I^2_n(t)} \int_0^t \frac{I^2_n(s)}{f^{n-1}(s)} ds \rt)^2\eqn
\begin{itemize}

\item Let us start by the term $ A_n$, using computations \eqref{I_cal_ge}  and \eqref{I_born},
since $f$ is increasing, $f'$ is non-increasing in $(0,L/2)$ and $m<0$,
 it follows that for  $ A$ big enough and for all $n$ sufficiently large, and for all $s \in [0, L/2 -A/\sqrt{n} ], $ 
\begin{equation}\label{21_ge}
\frac{f^{n+1}(s)}{n^2 (f'(s))^2} \lt(1-\frac1A\rt)^2 \le \frac{I^2_n(s)}{f^{n-1}(s)} \le \frac{f^{n+1}(s)}{n^2 (f'(s))^2}.
 \end{equation}
Let $W_n(t) = \int_0^t \frac{f^{n+1}(s)}{(f'(s))^2}ds $, for $ 0 \le  t < L/2  $, we have after integration by parts:
\begin{align*}
W_n(t) &= \int_0^t \frac{f^{n+1}(s)f'(s)}{(f'(s))^3}ds \\
&= \frac{f^{n+2}(t)}{(n+2) (f'(t))^3}+\frac{3}{n+2}  \int_0^t \frac{f^{n+2}(s)f''(s)}{(f'(s))^4}ds .\\
\end{align*}
Since $ f'' \le 0$, we deduce, using \eqref{21_ge},
\begin{equation}\label{Int_I/f^}
 \int_0^t \frac{I^2_n(s)}{f^{n-1}(s)} ds   \le  \frac{f^{n+2}(t)}{n^3 (f'(t))^3}
 \end{equation}
and that  for  $ A$ big enough and for all $n$ sufficiently large:
\bq
(u_{n,1}'(t))^2 &\le&
 \lt(\frac{n^2(f'(t))^2f^{n+2}(t)}{f^{n+1}(t) (1-\frac1A)^2n^3 (f'(t))^3}  \rt)^2 
\\
&=& \lt(\frac{f(t)}{nf'(t) (1-\frac1A)^2 }  \rt)^2 .
\eq
Also by   \eqref{E12},  
$$ J_n(s) \df \frac{I_n(s)}{f^{n-1}(s)} \frac{I_n(L-s)}{I_n(L)} \le \frac{f(s)}{n f'(s)}. $$

 Hence for $A$ big enough, for all $n$ sufficiently large, and for $A_n$ defined in \eqref{**_ge_k1}, by \eqref{21_ge},  we have
\begin{align*}
A_n &\le \frac{1}{n^3 (1 - \frac{1}{A})^4} \int_0^{L/2- A/\sqrt{n}}\frac{f^{3}(s)}{(f'(s))^3} ds\\
& \le \frac{f(L/2)^3}{n^3 (1 - \frac{1}{A})^4} \int_0^{L/2- A/\sqrt{n}}\frac{1}{(f'(s))^3} ds\\
&\sim \frac{c}{A^{2} n^{2}},
\end{align*}
where in the last equivalent we used $ \frac{1}{f'(s)} \sim_{s \ri L/2-} \frac{1}{\vert f''(L/2)\vert (L/2-s)} . $

Hence  for $A $ big enough,  \bqn{compl2} A_n &=& O\lt(\frac{\ln(n)}{n^2}\rt).\eqn
\item For the term $B_n $ in \eqref{**_ge_k1}: for $A$ big enough, for all $n$  large enough and  for $ a \in [-A,A] $,  we have
$$\lt\vert u_{n,1}'\lt(\frac{L}{2} +\frac{a}{\sqrt{n} }\rt)\rt\vert =  \frac{f^{n-1}\lt(\frac{L}{2} +\frac{a}{\sqrt{n} }\rt)}{I^2_n\lt(\frac{L}{2} +\frac{a}{\sqrt{n} }\rt)} \lt(\int_0^{L/2-A/\sqrt{n}} \frac{I^2_n(s)}{f^{n-1}(s)} ds  +  \int_{L/2-A/\sqrt{n}}^{L/2+\frac{a}{\sqrt{n} }} \frac{I^2_n(s)}{f^{n-1}(s)} ds  \rt) . $$

Let $C(f,2) =   \lt(  \frac{ 2f(L/2)}{ \vert f''(L/2)\vert} \rt)^{1/2} $ we have (similarly to the computation in~\eqref{I_ge} and~\eqref{E13}):
\bqn{compl3} f^n(L/2 - A/n^{1/2} )&\sim& f^n(L/2) e^{\frac{-A^{2}}{C(f,2)^{2}}},\eqn
and  \bqn{compl4}f'(L/2 - A/n^{1/2}) &\sim& \frac{ \vert f''(L/2) \vert A}{ n^{1/2}}. \eqn

By the above computation and \eqref{Int_I/f^}, we have 

 $$ \int_0^{L/2-A/\sqrt{n}} \frac{I^2_n(s)}{f^{n-1}(s)} ds \le \frac{f^{n+2}(L/2-\frac{A}{\sqrt{n} })}{n^3 (f'(L/2-\frac{A}{\sqrt{n} }))^3} \sim \frac{f^{n+2}(L/2)e^{-\frac{{A^2}}{{C(f,2)^{2}}}}}{n^{3/2}A^3\vert f''(L/2) \vert^3}.$$

Recall that from \eqref{lemme}, and for $h_1 (a) = \int_{-\infty }^a  e^{ \frac{-h^{2}}{C(f,{2})^{2}}}dh $, we have uniformly over  $ a \in [-A,A] $
   \begin{equation} I_n(L/2 + a/\sqrt{n}) \sim f^{n-1}(L/2)\f{h_1(a)}{\sqrt{n}}, \label{10_gen_k1} 
   \end{equation}
   
  hence since uniformly in $ a \in [-A,A] $,
 
 $$ f^n(L/2 - a/n^{1/2} )\sim f^n(L/2) e^{\frac{-a^{2}}{C(f,2)^{2}}},$$
  taking \eqref{compl3} into account,  
\bqn{C20}
\nonumber \int_{L/2-A/\sqrt{n}}^{L/2+\frac{a}{\sqrt{n} }} \frac{I^2_n(s)}{f^{n-1}(s)} ds  &=&  \frac{1}{\sqrt{n}}\int_{-A}^{a} \frac{I^2_n(L/2+\frac{\tilde{a}}{\sqrt{n} })}{f^{n-1}(L/2+\frac{\tilde{a}}{\sqrt{n} })} d \tilde{a}\\
 & \sim & \frac{f^{n-1}(L/2)}{\sqrt{n}} \int_{-A}^{a} \frac{h_1^2(\tilde{a})e^{\frac{\tilde{a}^2}{C(f,2)^{2}}}}{n} d \tilde{a} =\frac{f^{n-1}(L/2)\theta_{A}(a)}{n^{3/2}},
\eqn
where $\theta_{A}(a) \df \int_{-A}^{a} h_1^2(\tilde{a})e^{\frac{\tilde{a}^2}{C(f,2)^{2}}} d \tilde{a} $.

 We have  for $A$ big enough  
 $$\vert u_{n,1}'\lt(\frac{L}{2} +\frac{a}{\sqrt{n} }\rt) \vert \le c \frac{e^{-{\frac{a^2}{C(f,2)^{2}}}}} {\sqrt{n}h_1^2(a)} \lt(\theta_{A}(a) + \frac{1}{A^{3}}\rt).$$
Hence 
\bq
 B_n &\df& \int_{L/2- A/\sqrt{n}}^{L/2+ A/\sqrt{n}}J_n(s) (u_{n,1}'(s))^2 ds\\
 &= &\frac{1}{\sqrt{n}}\int_{-A}^{A}J_n(L/2+ a/\sqrt{n}) (u_{n,1}'(L/2+ a/\sqrt{n}))^2 da\\
 &\le &\frac{1}{\sqrt{n}}\int_{-A}^{A}J_n(L/2+ a/\sqrt{n}) \lt(c \frac{e^{-{\frac{a^2}{C(f,2)^{2}}}}} {\sqrt{n}h_1^2(a)} \lt(\theta_{A}(a) + \frac{1}{A^{3}}\rt)\rt)^2 da\\
 &\sim&  \frac{c(A)}{n^2 },\\
\eq
where, taking \eqref{10_gen_k1} into account,  we use for the last term that
$$ J_n(L/2 + a/\sqrt{n})=\frac{I_n(L/2 +a/\sqrt{n})}{f^{n-1}(L/2 +a/\sqrt{n})}  \frac{I_n(L/2 -a/\sqrt{n})}{I_n(L)} \sim c \frac{h_1(a)h_1(-a)e^{\frac{a^2}{C(f,2)^{2}}}}{\sqrt{n} } ,$$ and $c(A) $ is a constant that depends on $A$.
It follows that once $A$ is fixed, for $n$ big enough,  
\bqn{compl5} B_n &=&   O\lt(\frac{\ln(n)}{n^2}\rt).\eqn

\item For the last term $C_n $ in \eqref{**_ge_k1}, note that $J_n(s)=J_n(L -s)$ so
\begin{align*}
C_n &= \int_{L/2+ A/\sqrt{n}}^{L} J_n(s) (u_{n,1}'(s))^2 ds\\
&=\int_0^{L/2- A/\sqrt{n}} J_n(s) (u_{n,1}'(L-s))^2 ds.\\
\end{align*}
Also for $ s \le L/2- A/\sqrt{n} $
\begin{align*}
&\vert u_{n,1}'(L - s) \vert =  \frac{f^{n-1}(s)}{I^2_n(L-s)} \int_0^{L-s} \frac{I^2_n(t)}{f^{n-1}(t)} dt \\
&= \frac{f^{n-1}(s)}{I^2_n(L-s)} \lt( \int_0^{L/2-A/\sqrt{n}} \frac{I^2_n(t)}{f^{n-1}(t)} dt   +  \int_{L/2-A/\sqrt{n}}^{L/2+A/\sqrt{n}} \frac{I^2_n(t)}{f^{n-1}(t)} dt 
 + \int_{L/2+A/\sqrt{n}}^{L-s} \frac{I^2_n(t)}{f^{n-1}(t)} dt \rt).
\end{align*}
Using \eqref{Int_I/f^}, \eqref{compl3} and \eqref{compl4}, we have that for $A$ big enough, for all $n$ sufficiently large 
$$ \int_0^{L/2-A/\sqrt{n}} \frac{I^2_n(s)}{f^{n-1}(s)} ds \le 2  \frac{f^{n+2}(L/2)e^{-\frac{{A^2}}{{C(f,2)^{2}}}}}{n^{3/2}A^3(f''(L/2))^3} $$
and using \eqref{C20},
$$ \int_{L/2-A/\sqrt{n}}^{L/2+A/\sqrt{n}} \frac{I^2_n(t)}{f^{n-1}(t)} dt \le 2  \frac{\theta_{A}(A)f^{n-1}(L/2)}{n^{3/2}}.$$
For the last term since for $s\leq L/2-A/\sqrt{n}$,
\begin{align*}
\int_{L/2+A/\sqrt{n}}^{L-s} \frac{I^2_n(t)}{f^{n-1}(t)} dt
&\le I^2_n(L-s)\int_{L/2+A/\sqrt{n}}^{L-s} \frac{1}{f^{n-1}(t)} dt\\
&=  I^2_n(L-s)\int_s^{L/2-A/\sqrt{n}}\frac{1}{f^{n-1}(t)} dt\\
&=  I^2_n(L-s)\int_s^{L/2-A/\sqrt{n}}\frac{f'(t)}{f^{n-1}(t)f'(t)} dt\\
&\le \frac{I^2_n(L-s)}{f'(L/2-A/\sqrt{n})} \lt(\frac{f^{-n +2}(s)}{n-2} \rt) \\
&\sim c\frac{f^{-n+2}(s)I^2_n(L-s)}{n^{1/2}A}\\
\end{align*}
Since $L -s \ge L/2 $, we have for $A$ big enough and for all $n$ sufficiently large,
\begin{align*}
\vert u_{n,1}'(L - s)\vert &\le  c\frac{f^{2n-2}(L/2)}{n^{3/2}I_n^2(L/2)}\lt(  \frac{e^{-\frac{{A^2}}{{C(f,2)^{2}}}}}{A^3} + \theta_{A}(A)   \rt) +  c \frac{f(s)}{n^{1/2}A}\\
 &\le   \frac{c}{n^{1/2}} \lt(  \frac{e^{-\frac{{A^2}}{{C(f,2)^{2}}}}}{A^3} + \theta_{A}(A)   \rt)
 +c \frac{f(s)}{n^{1/2}A}
 \le  c(A) \frac{1}{n^{1/2}} 
\end{align*}
where in the second inequality, we used \eqref{I_L_ge}. Since, for $ s \le L/2 - A/\sqrt{n} $, by \eqref{I_born}  $ J_n(s) \le \frac{f(s)}{n f'(s)} $, for $A$ big enough and for all $n$ sufficiently large,
\begin{align*}
C_n 
&=\int_0^{L/2- A/\sqrt{n}} J_n(s) (u_{n,1}'(L-s))^2 ds.\\
&\le \frac{c(A)^2}{n} \int_0^{{L/2- A/\sqrt{n}} } J_n(s)ds\\
&\le  \frac{c(A)^2 f(L/2)}{n^2} \int_0^{{L/2- A/\sqrt{n}} } \frac{1}{f'(s)} ds\\
&\sim  \frac{c(A)^2 f(L/2)}{n^2}  \lt(\frac12 \ln(n) \rt)  \sim c(A)^2 \frac{\ln(n)}{n^2}
\end{align*}
Hence \bqn{compl6}C _n &= &  O\lt(\frac{\ln(n)}{n^2}\rt).\eqn
\end{itemize}
Putting \eqref{compl2}, \eqref{compl5} and \eqref{compl6} together, we deduce that $\displaystyle \Var(\tau_n)= O\lt(\frac{\ln(n)}{n^2}\rt) .$
\wwtbp
\par
We deduce the following result

\begin{theo}\label{cut-off-ge}
Let $f $ be a $C^3$ function on $[0, L]$ satisfying Assumptions \eqref{H_f} and  \eqref{H_f2} and $f''(L/2) <0 $.
For $n\in\NN\setminus\{1\}$,  consider  the Brownian motion $X_n\df(X_n(t))_{t\geq 0}$ described in Definition \ref{BM}.
Then the family of diffusion processes $(X_n)_{n\in\NN\setminus\{1\}}$ has a cut-off in separation in the sense of Section \ref{cutoff}, with mixing times $(a_n)_{n\in\NN\setminus\{1\}} = \lt(\frac{f(L/2)}{\vert f''(L/2)\vert}\frac{\ln(n)}{n}\rt)_{n\in\NN\setminus\{1\}}$ and windows $(b_n)_{n\in\NN\setminus\{1\}}$ satisfying $\forall \ n\ge 1$, $b_n\le a_n$,  together with $\displaystyle \frac{\sqrt{\ln(n)}}{n}=o(b_n)$. 
\end{theo}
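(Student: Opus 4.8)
The plan is to derive Theorem \ref{cut-off-ge} directly from the general cut-off criterion of Theorem \ref{th1}, applied with the explicit mixing times $a_n = \frac{f(L/2)}{|f''(L/2)|}\frac{\ln(n)}{n}$ and the windows $(b_n)_{n\ge 1}$ given in the statement. Recall that Theorem \ref{th1} requires three things beyond $0<b_n\le a_n$: that $a_n \sim_{n\to\infty} \EE[\tau_n]$, that $a_n - \EE[\tau_n] = o(b_n)$, and that $\Var(\tau_n) = o(b_n^2)$. The condition $0 < b_n \le a_n$ is part of the present hypotheses, so only the three moment-type conditions need to be checked, and all of them will follow from Propositions \ref{mean-ge} and \ref{var-ge}.

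First I would invoke Proposition \ref{mean-ge}: since $f$ is $C^3$, satisfies \eqref{H_f} and \eqref{H_f2}, and $f''(L/2) < 0$, and since $(b_n)$ satisfies $\frac{\sqrt{\ln n}}{n} = o(b_n)$ and $b_n\le a_n$, we obtain $\EE[\tau_n] - a_n = o(b_n)$. Because $b_n \le a_n$, this $o(b_n)$ is in particular $o(a_n)$, whence $\EE[\tau_n] = a_n(1+o(1))$, i.e.\ $a_n \sim_{n\to\infty} \EE[\tau_n]$; thus the first and second of the above conditions are in hand. For the variance condition, I would use Proposition \ref{var-ge}, which gives $\Var(\tau_n) = O(\ln(n)/n^2) = O\big((\sqrt{\ln n}/n)^2\big)$; since $\sqrt{\ln n}/n = o(b_n)$, squaring yields $\Var(\tau_n) = o(b_n^2)$, which is the last remaining condition.

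With all hypotheses of Theorem \ref{th1} verified, that theorem applies and yields the cut-off in separation for $(X_n)_{n\in\NN\setminus\{1\}}$ with mixing times $(a_n)$ and windows $(b_n)$, which is precisely the assertion of Theorem \ref{cut-off-ge}. I do not expect any genuine obstacle here: the substantive analytic work — the Laplace-method asymptotics of $I_n$ near $r=L/2$ and the resulting control of $\EE[\tau_n]$ and $\Var(\tau_n)$ — is already contained in Propositions \ref{mean-ge} and \ref{var-ge}, and what remains is only the bookkeeping of the $o$- and $O$-estimates through the Bienaymé--Chebyshev-based criterion, the sole point worth a line being the elementary remark that $b_n \le a_n$ turns $o(b_n)$ into $o(a_n)$ and hence legitimizes calling $a_n$ a mixing time.
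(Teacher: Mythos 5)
Your proposal is correct and coincides with the paper's own proof, which consists precisely in combining Theorem \ref{th1} with Propositions \ref{mean-ge} and \ref{var-ge}; the only detail you add, namely that $\Var(\tau_n)=O(\ln(n)/n^2)=o(b_n^2)$ because $\sqrt{\ln n}/n=o(b_n)$, and that $b_n\le a_n$ upgrades $o(b_n)$ to $o(a_n)$ so that $a_n\sim\EE[\tau_n]$, is exactly the intended bookkeeping. Nothing is missing.
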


\proof
Use Theorem \ref{th1},  Proposition \ref{mean-ge} and Proposition \ref{var-ge}.
\wwtbp
\par
The previous arguments provide an alternative proof to the main result from \cite{sphere}, and strengthens the result with a cut-off in separation with windows:
\begin{cor}\label{sphere}
Let  $X_n\df(X_n(t))_{t\geq 0}$ be the Brownian motion described in Definition \ref{BM}, where $M_f^n$
is replaced by
 the sphere $ \mathbb{S}^n$ and where $\wi 0$ now stands for any point of $ \mathbb{S}^n$.
Then the family of diffusion processes $(X_n)_{n\in\NN\setminus\{1\}}$ has a cut-off in separation in the sense of Section \ref{cutoff},  with mixing times $(a_n)_{n\in\NN\setminus\{1\}} =\lt( \frac{\ln(n)}{n}\rt)_{n\in\NN\setminus\{1\}}$ and windows $(b_n)_{n\in\NN\setminus\{1\}}$ satisfying $\forall \ n\ge 1$, $b_n\le a_n$,  together with $\displaystyle \frac{\sqrt{\ln(n)}}{n}=o(b_n)$. 
\end{cor}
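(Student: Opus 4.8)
The plan is to recognize the round sphere $\mathbb{S}^n$ as an instance of the model manifold $M_f^n$ and then invoke Theorem~\ref{cut-off-ge}. First I would recall the classical fact that, in geodesic polar coordinates centred at an arbitrary point $p\in\mathbb{S}^n$, the round metric of radius $1$ reads $ds^2=dr\otimes dr+\sin^2(r)\,d\theta\otimes d\theta$ on $(0,\pi)\times\mathbb{S}^{n-1}$, with the two boundary spheres $\{r=0\}$ and $\{r=\pi\}$ collapsing to $p$ and its antipode; the smoothness of this warped product across $r=0$ and $r=\pi$ is exactly the kind of regularity at $\wi 0$ and $\wi L$ alluded to after \eqref{H_f}, and it holds because $\sin$ is odd and flat to all even orders at $0$ and $\pi$. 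Hence $\mathbb{S}^n$ is isometric to $M_f^n$ with $L=\pi$ and $f=\sin$, the point $\wi 0$ corresponding to $p$ and $\wi L$ to $-p$. Since the isometry group of $\mathbb{S}^n$ acts transitively, the choice of $p$ is immaterial, which is what the phrase ``$\wi 0$ now stands for any point'' records. Under this identification, the $\Delta$-diffusion on $M_{\sin}^n$ started at $\wi 0$ (Definition~\ref{BM}) is precisely the time-accelerated Brownian motion on $\mathbb{S}^n$ started at $p$.

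Next I would verify that $f=\sin$ meets the hypotheses of Theorem~\ref{cut-off-ge}. On $[0,\pi]$ one has $\sin\geq 0$, $\sin s\sim_0 s$ and $\sin(\pi-s)=\sin s\sim_0 s$, while $f^{(2k)}=\pm\sin$ vanishes at $0$ and at $\pi$; so \eqref{H_f} holds and $\sin$ is a weight function. Moreover $f(\pi-s)=f(s)$, $f'(s)=\cos s>0$ on $[0,\pi/2)$, and $f''(s)=-\sin s\leq 0$ on all of $[0,\pi]$, so \eqref{H_f2} holds; and $f$ is $C^\infty$, in particular $C^3$, on $[0,\pi]$. Finally $f''(L/2)=f''(\pi/2)=-1<0$, so the nondegeneracy hypothesis is satisfied.

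Then I would simply apply Theorem~\ref{cut-off-ge}: the family $(X_n)_{n\in\NN\setminus\{1\}}$ has a cut-off in separation with mixing times $a_n=\frac{f(L/2)}{\vert f''(L/2)\vert}\frac{\ln n}{n}$ and windows $b_n\leq a_n$ satisfying $\sqrt{\ln n}/n=o(b_n)$. Since $f(L/2)=\sin(\pi/2)=1$ and $\vert f''(L/2)\vert=1$, the prefactor equals $1$ and $a_n=\ln(n)/n$, which is the asserted statement.

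I do not expect any genuine obstacle: the corollary is a direct specialization of Theorem~\ref{cut-off-ge}. The only point deserving a word of care is the reduction to $M_f^n$ with an arbitrary base point, which is handled by the transitivity of the isometry group of $\mathbb{S}^n$ together with the standard description of the sphere in geodesic polar coordinates; everything else is the elementary check that $f=\sin$ satisfies \eqref{H_f}, \eqref{H_f2} and $f''(\pi/2)<0$, plus the evaluation $f(L/2)/\vert f''(L/2)\vert=1$.
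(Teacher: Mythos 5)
Your proposal is correct and follows exactly the paper's route: the paper's proof of Corollary \ref{sphere} is precisely ``apply Theorem \ref{cut-off-ge} with $f=\sin$, $L=\pi$, and use symmetry to dispose of the choice of starting point.'' Your write-up merely spells out the verification of \eqref{H_f}, \eqref{H_f2}, $f''(\pi/2)=-1<0$ and the evaluation $f(L/2)/\vert f''(L/2)\vert=1$, which the paper leaves implicit.
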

\proof
Use Theorem \ref{cut-off-ge}, with $f = \sin $ and $L=\pi$, and note that by symmetry  in this case the starting point is not relevant.
\wwtbp
\par
We  also deduce the following consequences. 
\begin{cor}\label{cor_ker}
Let $f $ be a $C^3$ function on $[0, L]$ satisfying Assumptions \eqref{H_f} and  \eqref{H_f2} and $f''(L/2) <0 $.
For $n\in\NN\setminus\{1\}$,  consider  the Brownian motion $X_n\df(X_n(t))_{t\geq 0}$ in $M_f^n$ described in Definition \ref{BM}.
 There exist $C >0$ and $n_0 \in \mathbb{N} $ such that for all  $ r > 0$, $ 0<r' <1 $ and for all $ n \ge n_0$,
 \bq
   \lVe \mathcal{L}\lt(X_n({(1+r)\frac{f(L/2)}{\vert f''(L/2)\vert}\frac{\ln(n)}{n} }) \rt) - \mathcal{U}_n \rVe_{\mathrm{tv}} &\le &\frac{C}{r^2 \ln(n)}\\
\fo y \in M_f^n,\qquad    P^{(n)}_{(1+r)\frac{f(L/2)}{\vert f''(L/2)\vert}\frac{\ln(n)}{n}} ( \tilde{0},y)&\ge &\lt(1- \frac{C}{r^2 \ln(n)} \rt) \frac{1}{\Vol(M_f^n )} \\
\inf_{ y \in M_f^n}  P^{(n)}_{(1-r')\frac{f(L/2)}{\vert f''(L/2)\vert}\frac{\ln(n)}{n}} ( \tilde{0},y)&\le &\lt(\frac{C}{r'^2 \ln(n)} \rt) \frac{1}{\Vol(M_f^n )} \\
 \eq
where $\Vert \cdot  \Vert_{\mathrm{tv}}$ stands for the total variation norm,  $\mathcal{L}(X_n(t))$ is the law of $X_n(t)$, $\mathcal{U}_n $ is the uniform measure in $M_f^n$, and $P^{(n)}_t(\cdot,\cdot) $ is the heat kernel density at time $t>0$ associated to the Laplacian on $M_f^n $.
\end{cor}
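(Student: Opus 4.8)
\medskip
\noindent\textbf{Proof plan.}
The plan is to reduce all three inequalities to two one-sided deviation estimates for the sharp strong stationary time $\tau_n$, and then to obtain those from Chebyshev's inequality together with the two moment bounds already proved. First I would record the consequences of the sharpness of $\tau_n$ (Remark following Corollary~\ref{cor1}). On the compact manifold $M_f^n$ the heat kernel $P^{(n)}_t(\wi 0,\cdot)$ is continuous and strictly positive, so the Radon--Nikodym density of $\cL(X_n(t))$ with respect to $\cU_n$ is the continuous function $y\mapsto\Vol(M_f^n)\,P^{(n)}_t(\wi 0,y)$, the essential supremum in the definition of $\fs$ becomes a genuine infimum, and, since $\fs(\cL(X_n(t)),\cU_n)=\PP[\tau_n>t]$,
\[
\inf_{y\in M_f^n}\Vol(M_f^n)\,P^{(n)}_t(\wi 0,y)=1-\fs(\cL(X_n(t)),\cU_n)=\PP[\tau_n\le t].
\]
Combining this with $\Vert\cL(X_n(t))-\cU_n\Vert_{\mathrm{tv}}\le\fs(\cL(X_n(t)),\cU_n)=\PP[\tau_n>t]$ and setting $a_n=\frac{f(L/2)}{|f''(L/2)|}\frac{\ln n}{n}$, one sees that the three assertions all follow once one proves, for suitable $C>0$, $n_0\in\NN$ and every $n\ge n_0$, $r>0$, $r'\in(0,1)$,
\[
\PP[\tau_n>(1+r)a_n]\le\frac{C}{r^2\ln n}\qquad\text{and}\qquad\PP[\tau_n\le(1-r')a_n]\le\frac{C}{r'^2\ln n}.
\]

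For the two tail estimates I would use $\Var(\tau_n)\le c_1\ln n/n^2$ from Proposition~\ref{var-ge}, together with the quantitative form that the proof of Proposition~\ref{mean-ge} actually delivers, namely $\EE[\tau_n]=a_n+O(\sqrt{\ln n}/n)$ (one keeps the explicit $O(\cdot)$ remainders of \eqref{E7}, \eqref{E11} and \eqref{B_n_g} instead of passing to $o(b_n)$). Put $\delta_n=|\EE[\tau_n]-a_n|/a_n$, so that $\delta_n=O(1/\sqrt{\ln n})$ because $a_n\asymp\ln n/n$. If $r\ge 2\delta_n$ then $(1+r)a_n-\EE[\tau_n]\ge\tfrac12 r a_n>0$, whence Chebyshev's inequality gives
\[
\PP[\tau_n>(1+r)a_n]\le\frac{\Var(\tau_n)}{(\tfrac12 r a_n)^2}\le\frac{4c_1\ln n/n^2}{r^2\,(f(L/2)/|f''(L/2)|)^2(\ln n/n)^2}=\frac{c_2}{r^2\ln n}
\]
with $c_2$ depending on neither $n$ nor $r$; if instead $r<2\delta_n$, then $r^2\ln n$ is bounded by a fixed constant, hence $C/(r^2\ln n)\ge 1$ as soon as $C$ is large enough, and the inequality holds trivially since $\PP[\tau_n>(1+r)a_n]\le1$. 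Taking $C$ larger than $c_2$ and than that constant settles the first estimate uniformly in $r$; the second one is obtained in exactly the same way, using $\EE[\tau_n]-(1-r')a_n=r'a_n+(\EE[\tau_n]-a_n)\ge\tfrac12 r'a_n>0$ when $r'\ge 2\delta_n$ and the bound $\PP[\tau_n\le(1-r')a_n]\le1$ when $r'<2\delta_n$.

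I expect the main, and essentially only, point requiring care to be that $n_0$ and $C$ must not depend on $r$ or $r'$: it is precisely this requirement that forces the use of the quantitative estimate $\EE[\tau_n]=a_n+O(\sqrt{\ln n}/n)$ rather than the merely qualitative $\EE[\tau_n]-a_n=o(a_n)$, and the splitting into the regime $r\gtrsim1/\sqrt{\ln n}$, where Chebyshev is effective, and $r\lesssim1/\sqrt{\ln n}$, where the claimed bound is vacuous (for the heat kernel lower bound the latter regime is absorbed into the positivity of $P^{(n)}_t$). The remaining ingredients --- identifying the density of $\cL(X_n(t))$ with $\Vol(M_f^n)\,P^{(n)}_t(\wi 0,\cdot)$, the continuity and positivity of the heat kernel used to pass from $\esssup$ to $\sup$, and the two Chebyshev computations --- are routine given Propositions~\ref{mean-ge} and~\ref{var-ge} and the sharp strong stationary time property of $\tau_n$.
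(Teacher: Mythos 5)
Your proposal is correct and follows essentially the same route as the paper: Chebyshev's inequality applied to $\tau_n$ with the mean and variance estimates of Propositions \ref{mean-ge} and \ref{var-ge}, combined with the sharp strong stationary time identity $\fs(\cL(X_n(t)),\cU_n)=\PP[\tau_n>t]$, the bound of total variation by separation, and the pointwise heat-kernel inequalities coming from the definition of $\fs$. Your explicit splitting into the regimes $r\ge 2\delta_n$ and $r<2\delta_n$ (where the bound is vacuous) makes precise the uniformity in $r,r'$ that the paper's proof asserts without detailing, so it is a welcome refinement rather than a deviation.
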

 \proof
From the computations in the proof of Proposition \ref{mean-ge} and Proposition \ref{var-ge}, with $b_n = a_n $, there exist $C >0$ and $n_0 \in \mathbb{N} $ such that for all  $ r > 0$ and for all $ n \ge n_0$, 
\bq \PP\lt[\tau_n>(1+r)a_n \rt] \leq  \frac{\Var(\tau_n)}{((1+r) a_n - \EE[\tau_n] )^2}\leq \frac{C}{r^2 \ln(n)} \eq\par
 The first  conclusion follows, since 
\bq \lVe \mathcal{L}\lt(X_n({(1+r) a_n }) \rt) -  \mathcal{U}_n \rVe_{\mathrm{tv}}& \le &\fs\lt(\mathcal{L}\lt(X_n({(1+r)a_n })\rt),  \mathcal{U}_n \rt)\ \le\ \PP\lt[\tau_n>(1+r)a_n\rt]\eq
 The second conclusion follows by  definition of the separation discrepancy, since for all $y \in  M_f^n$ and $ t >0 $,
\bq 1 - P^{(n)}_t(\tilde{0},y) vol(M_f^n)  & \le& \fs(\mathcal{L}(X_n(t)),\mathcal{U}_n ) \eq
 The last conclusion follows in the same way.
 \wwtbp

\par
\begin{pro}\label{prop-mean}
Let $f $ be a $C^3$ function on $[0, L]$ satisfying Assumptions \eqref{H_f} and  \eqref{H_f2}. Assume that  for some $  k \ge 2$, $$f(L/2 + h ) = f(L/2) +  \frac{  f^{(2k)}(L/2)}{ (2k)!}  h^{2k} + o(h^{2k})$$ where $f^{(2k)}(L/2) < 0 $, then
$$\EE[ \tau_n] \sim  \frac{2kC({2k}) C(f,2k)^2 }{n^{1/k}\Gamma\lt(\frac{1}{{2k}}\rt)},  $$
 where $\Gamma $ is the usual Gamma functional and
 \bqn{C2}
 C(f,2k) &\df&   \lt(  \frac{ (2k)!f(L/2)}{ \vert f^{(2k)}(L/2)\vert} \rt)^{1/2k}\\
\label{C1} C({2k}) &\df& \int_0^{\infty}  h_{1,k}(a)h_{1,k}(-a)e^{a^{2k}} da, 
\eqn
and  for any $x\in\RR$, $ h_{1,k}(x) \df \int_{-\infty}^{x} e^{-a^{2k}} da  $.
\end{pro}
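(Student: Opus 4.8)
The plan is to repeat the argument of Proposition~\ref{mean-ge}, replacing the quadratic Laplace analysis at $L/2$ by the degenerate one of order $2k$. By Proposition~\ref{mean-var}, Remark~\ref{rem-int}, the identity $u_{n,1}(0)=2\int_0^{L/2}J_n(s)\,ds$ coming from~\eqref{u_ge} and~\eqref{J_def_ge}, and the symmetry $J_n(s)=J_n(L-s)$, we have after the change of variable $s=L/2-t$
\begin{equation*}
\EE[\tau_n]\ =\ 2\int_0^{L/2}\frac{I_n(L/2-t)}{f^{n-1}(L/2-t)}\ \frac{I_n(L/2+t)}{I_n(L)}\,dt .
\end{equation*}
Since $f^{(j)}(L/2)=0$ for $1\le j\le 2k-1$ and $f^{(2k)}(L/2)<0$, the hypothesis gives $f(L/2\pm u)=f(L/2)\bigl(1-u^{2k}/C(f,2k)^{2k}+o(u^{2k})\bigr)$, so the integrand concentrates on the boundary layer $t\asymp n^{-1/(2k)}$; I therefore rescale by $t=C(f,2k)(n-1)^{-1/(2k)}b$, which makes $(n-1)\,t^{2k}/C(f,2k)^{2k}=b^{2k}$.

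With this rescaling, a degenerate Laplace computation — entirely parallel to~\eqref{I_L_ge} and~\eqref{lemme}, using $I_n(L/2+t)=I_n(L/2)+\int_0^t f^{n-1}(L/2+u)\,du$, $I_n(L/2-t)=I_n(L/2)-\int_0^t f^{n-1}(L/2-u)\,du$ and $h_{1,k}(0)=\tfrac12 h_{1,k}(\infty)$ — yields, uniformly for $b$ in compact subsets of $[0,\infty)$,
\begin{equation*}
f^{n-1}(L/2\pm t)\sim f^{n-1}(L/2)\,e^{-b^{2k}},\qquad I_n(L/2\pm t)\sim f^{n-1}(L/2)\,C(f,2k)(n-1)^{-1/(2k)}\,h_{1,k}(\pm b),
\end{equation*}
and in particular $I_n(L)=2I_n(L/2)\sim f^{n-1}(L/2)\,C(f,2k)(n-1)^{-1/(2k)}\,h_{1,k}(\infty)$. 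Substituting these and $dt=C(f,2k)(n-1)^{-1/(2k)}\,db$, the contribution of $t\in\bigl[0,A\,C(f,2k)(n-1)^{-1/(2k)}\bigr]$ is asymptotic to
\begin{equation*}
\frac{2\,C(f,2k)^2\,(n-1)^{-1/k}}{h_{1,k}(\infty)}\int_0^{A}h_{1,k}(b)\,h_{1,k}(-b)\,e^{b^{2k}}\,db .
\end{equation*}

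It remains to discard the tail $t\ge A\,C(f,2k)(n-1)^{-1/(2k)}$, uniformly in $n$ as $A\to\infty$. From~\eqref{I_born} we have $J_n(s)\le f(s)/(nf'(s))$; since $f$ is increasing with $f''\le 0$ near $L/2$ one gets $f(L/2)-f(L/2-t)\le t\,f'(L/2-t)$, hence $f'(L/2-t)\ge c\,t^{2k-1}$ on a fixed neighbourhood of $0$ (using $f(L/2)-f(L/2-t)\ge c\,t^{2k}$ from the expansion), while on the complementary fixed-size region $f'$ is bounded below. Therefore
\begin{equation*}
\int_{A\,C(f,2k)(n-1)^{-1/(2k)}}^{L/2}J_n(L/2-t)\,dt\ \le\ \frac{c}{n}\int_{A\,C(f,2k)(n-1)^{-1/(2k)}}^{\varepsilon}\frac{dt}{t^{2k-1}}+\frac{c}{n}\ =\ O\!\left(\frac{A^{2-2k}}{n^{1/k}}\right)+o\!\left(n^{-1/k}\right),
\end{equation*}
and the exponent $2-2k<0$ is exactly where $k\ge 2$ is used. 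Letting $n\to\infty$ and then $A\to\infty$ — the integrand satisfies $h_{1,k}(b)h_{1,k}(-b)e^{b^{2k}}\sim h_{1,k}(\infty)/(2k\,b^{2k-1})$ as $b\to\infty$, integrable on $[0,\infty)$ again because $k\ge 2$, so $\int_0^A\to C(2k)$ — gives $\EE[\tau_n]\sim 2\,C(f,2k)^2\,C(2k)\,(n-1)^{-1/k}/h_{1,k}(\infty)$. Finally the substitution $v=b^{2k}$ gives $h_{1,k}(\infty)=2\int_0^\infty e^{-b^{2k}}\,db=\tfrac1k\,\Gamma\!\bigl(\tfrac1{2k}\bigr)$, and since $(n-1)^{-1/k}\sim n^{-1/k}$ this is the announced equivalent.

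The main obstacle is the uniform control of the degenerate Laplace approximations above — that the $o(u^{2k})$ Taylor error, once multiplied by $n-1$ and exponentiated, remains harmless. This forces one first to restrict to $t\le\varepsilon$ with $\varepsilon$ small enough that the remainder is dominated by, say, half of $u^{2k}/C(f,2k)^{2k}$, thereby also producing the integrable dominating function needed to pass the limit inside the $db$-integral; and then to patch in the fixed-size region $t\in[\varepsilon,L/2]$, on which $J_n(L/2-t)=O(1/n)=o(n^{-1/k})$ by~\eqref{I_born}. Everything else is a routine dominated-convergence argument.
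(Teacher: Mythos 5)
Your proposal is correct and follows essentially the same route as the paper: you split the integral $2\int_0^{L/2}J_n$ at the scale $n^{-1/(2k)}$, carry out the degenerate Laplace analysis in the boundary layer (yielding the $h_{1,k}$ integrals and $I_n(L/2)\sim \tfrac1{2k}\Gamma(\tfrac1{2k})C(f,2k)f^{n-1}(L/2)n^{-1/(2k)}$), bound the outer region by $J_n\le f/(nf')$ with $f'(L/2-t)\ge c\,t^{2k-1}$ to get an $O(A^{2-2k}n^{-1/k})$ error, and then let $A\to\infty$ in a sandwich — exactly the paper's $A_n$/$B_n$ decomposition. The only differences are cosmetic: you rescale by $C(f,2k)(n-1)^{-1/(2k)}$ so that $h_{1,k}$ appears directly instead of the paper's unnormalized $h_k$, and your constant bookkeeping ($h_{1,k}(\infty)=\tfrac1k\Gamma(\tfrac1{2k})$) checks out against the stated equivalent.
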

\proof
Recall from  Proposition \ref{mean-var}  that
$\EE[\tau_n] = u_{n,1}(0)=  \int_0^{L}  \frac{I_n(s)}{f^{n-1}(s)} \lt( \frac{I_n(L)-I_n(s)}{I_n(L)}\rt)  ds .$
Let us write, for $ A$ big enough and $n$ sufficiently large:

\begin{align}\label{u_ge_2}
&u_{n,1}(0)=  \int_0^{L}  \frac{I_n(s)}{f^{n-1}(s)} \lt( \frac{I_n(L)-I_n(s)}{I_n(L)}\rt)  ds = 2\int_0^{L/2} \frac{I_n(s)}{f^{n-1}(s)}\lt( \frac{I_n(L)-I_n(s)}{I_n(L)}\rt)  ds \notag \\
&=2\lt( \underbrace{\int_0^{L/2 - A/n^{1/2k}}  \frac{I_n(s)}{f^{n-1}(s)} \lt( \frac{I_n(L)-I_n(s)}{I_n(L)}\rt)  ds}_{A_n} \rt. \\
& \quad \quad \quad \quad \quad \quad + \lt. \underbrace{\int_{L/2 - A/n^{1/2k}}^{L/2 }   \frac{I_n(s)}{f^{n-1}(s)} \lt( \frac{I_n(L)-I_n(s)}{I_n(L)}\rt)  ds}_{B_n} \rt)
\end{align}

\begin{itemize}
\item Volume of $M_f^n$:
 Since $f$ is increasing in $[0, L/2]$, using   Laplace method we get:

\begin{align}\label{I_L_ge2}
 I_n\lt(\frac{L}{2}\rt) &= \int_0^{\frac{L}{2}} f^{n-1}(t)dt = \int_0^{\frac{L}{2}} \exp((n-1) \ln (f)(t) )dt \notag \\
 & \sim_{n \ri \infty} \frac{\Gamma\lt(\frac{1}{2k}\rt)}{2k} \lt(\frac{(2k)!}{ \vert (\ln f) ^{(2k)}(L /2)  \vert }\rt)^{1/2k} \frac{f^{n-1}(L/2)}{(n-1)^{1/2k}} \notag \\
 &\sim_{n \ri \infty}  \frac{1}{2k}\Gamma\lt(\frac{1}{2k}\rt)  \lt(  \frac{ (2k)!f(L/2)}{n \vert f^{(2k)}(L/2)\vert} \rt)^{1/2k} f^{n-1}(L/2) \notag \\
 &\sim_{n \ri \infty} \frac{1}{2k}\Gamma\lt(\frac{1}{2k}\rt) C(f,2k)  \frac{f^{n-1}(L/2)}{n^{1/2k}}
 \end{align}

Since $f''$ is non-positive, $f'(0)=1$, $f^{(i)}(L/2) = 0 $ for $ i \in \{1,...,2k-1\}$, and $f^{(2k)}(L/2) < 0 $, there exists $M > 0$ such that, for all $u\in [0,L]$
$$ \lt\vert  \frac{f''(u)}{(f'(u))^{(2k-2)/(2k-1)}} \rt\vert \le M .$$
It follows that for all $ s \in [0, L/2)$ and for all $t\in [0,s]$

$$  \lt\vert \frac{f^{n}(t)f''(t)}{n(f'(t))^2} \rt\vert \le \frac{f^{n}(t)M}{n(f'(t))^{2k/(2k-1)}} \le \frac{f^{n}(t)M}{n(f'(s))^{2k/(2k-1)}}.$$ 

Hence  \eqref{I_cal_ge} gives
\begin{equation}\label{I_born_ge_2}
 \frac{f^n(s)}{nf'(s)} -\frac{M}{n(f'(s))^{2k/(2k-1)}} I_{n+1}(s)   \le I_n(s)\le \frac{f^n(s)}{nf'(s)},
\end{equation}
it follows that
\begin{equation*}
 \frac{f^n(s)}{nf'(s)}  \lt(1 - \frac{Mf(s)}{(n+1)(f'(s))^{2k/(2k-1)}}\rt)   \le I_n(s)\le \frac{f^n(s)}{nf'(s)},
\end{equation*}
and
\begin{equation}\label{vol_born}
  1 - \frac{f^n(s)}{nf'(s)I_n(L)}  \le \frac{I_n(L)- I_n(s)}{I_n(L)}\le 1.
\end{equation}

\item For  $ s \in [0,L/2 - A/n^{1/2k}]$.\\
 The above equation gives:

\begin{equation*}
  1 - \frac{f^n(L/2 - A/n^{1/2k})}{nf'(L/2 - A/n^{1/2k})I_n(L)}   \le \frac{I_n(L)- I_n(s)}{I_n(L)}\le 1.
\end{equation*}
Since 
\bqn{C3} f^n(L/2 - A/n^{1/2k} )&\sim& f^n(L/2) e^{\frac{-A^{2k}}{C(f,2k)^{2k}}},\eqn
and  \bqn{E40}
f'(L/2 - A/n^{1/2k})& \sim&\frac{ \vert f^{(2k)}(L/2) \vert A^{2k-1}}{(2k-1)! n^{(2k-1)/2k}}, \eqn 
using \eqref{I_L_ge2}, we get for  $ A$ big enough and $n$ large enough 
$$  \frac{f^n(L/2 - A/n^{1/{2k}})}{nf'(L/2 - A/n^{1/{2k}})I_n(L)} \sim
\frac{   f(L/2) e^{\frac{-A^{2k}}{C(f,{2k})^{2k}}} }{  \frac{2}{(2k)!}\Gamma\lt(\frac{1}{{2k}}\rt) C(f,{2k})    \vert f^{({2k})}(L/2) \vert A^{2k-1}     } \le  e^{\frac{-A^{2k}}{C(f,{2k})^{2k}}} , $$
and so using the above equations, we get that for  $ A$ big enough  uniformly over $ s \in [0,L/2 - A/n^{1/{2k}}] $
\begin{equation}\label{vol_born_frac_2}
  (1 - e^{\frac{-A^{2k}}{C(f,{2k})^{2k}}}  )   \le \frac{I_n(L)- I_n(s)}{I_n(L)}\le 1.
\end{equation}
Since 
$$ \frac{Mf(s)}{(n+1)(f'(s))^{{2k}/(2k-1)}} \le \frac{Mf(L/2)}{(n+1)(f'(L/2 - A/n^{1/{2k}} ))^{{2k}/(2k-1)}} \sim \frac{ c} {A^{2k}}  $$
it follows with \eqref{E40}  that for $ A$ big enough and $n$ sufficiently large 
\begin{equation}\label{I_unif_gen}
 \frac{f^n(s)}{nf'(s)}  \lt(1 - \frac{1}{A^{2k-1}}\rt)   \le I_n(s)\le \frac{f^n(s)}{nf'(s)}.
\end{equation}
 Hence using \eqref{vol_born_frac_2} we get that for $ A$ big enough and for all  $n$ sufficiently large, uniformly in $s \in [0, L/2-A/n^{1/{2k}}]$:
 \begin{equation}\label{J_unif_2}
 \frac{f(s)}{nf'(s)}  \lt(1 - \frac{1}{A^{2k-1}}\rt) (1 -e^{\frac{-A^{2k}}{C(f,{2k})^{2k}}} )  \le J_n(s)\le \frac{f(s)}{nf'(s)}
 \end{equation}
 where $J_n$ is defined in \eqref{J_def_ge}.\par
 Since   $ \frac{f(s)}{f'(s)} \sim_{s \ri L/2-} \frac{(2k-1)!f(L/2)}{\vert f^{(2k)}(L/2)\vert (L/2-s)^{(2k-1)}} , $
we have 
 $$\frac1n \int_0^{L/2-A/n^{1/{2k}}} \frac{f(s)}{f'(s)}ds \sim \frac1n \int_0^{L/2-A/n^{1/{2k}}} \frac{(2k-1)!f(L/2)}{\vert f^{({2k})}(L/2)\vert (L/2-s)^{(2k-1)}}ds
 \sim \frac{c}{n^{1/k} A^{2k-2}} . $$

We get, for all $A$ big enough, and for $A_n$ defined in \eqref{u_ge_2}:
 \begin{equation} \label{sup_1_gen_2}
 \limsup_{n\ri\iy}  \frac{ A_n}{ \frac{c}{n^{1/k}}} = \limsup_{n\ri\iy}  \frac{ \int_{0}^{ L/2 -A/n^{1/{2k}} } J_n(s) ds}{ \frac{c}{n^{1/k}}} \le \frac{1}{A^{2k-2}}  
   \end{equation} 
   and 
    \begin{equation} \label{inf_1_gen_2} 
  \liminf_{n\ri\iy}   \frac{ A_n}{ \frac{c}{n^{1/k}}} =  \liminf_{n\ri\iy}   \frac{ \int_{0}^{ L/2 -A/ n^{1/{2k}}} J_n(s) ds}{ \frac{c}{n^{1/k}}}\ge   \frac{1}{A^{2k-2}} \lt(1 - \frac{1}{A^{2k-1}}\rt) (1 - e^{\frac{-A^{2k}}{C(f,{2k})^{2k}}} )  
  \end{equation}  
  
  \item If $s \in [\frac{L}{2}-\frac{A}{n^{1/{2k}} }, \frac{L}{2}+\frac{A}{n^{1/{2k}}} ]$, then write $s = L/2 + a/n^{1/{2k}}  $, with $a \in [-A,A] $. Since $f^{(i)}(L/2) = 0 $ for $ i \in \{1,...,2k-1\}$,  $f^{(2k)}(L/2) < 0 $ and $ f\in C^{2k+1}$, we deduce that uniformly in $a\in [-A,A] $:
\begin{align}
&I_n(L/2 + a/n^{1/{2k}}  ) = I_n(L/2) + \int_{L/2}^{L/2 + a/n^{1/{2k}} } f^{n-1}(x)dx \notag \\
&= I_n(L/2) + \frac{1}{n^{1/{2k}} } \int_{0}^{a} f^{n-1}\lt(\f{L}2 + \frac{h}{n^{1/{2k}} }\rt)dh \notag \\
&= I_n(L/2) + \frac{1}{n^{1/{2k}} }\int_{0}^{a} \lt(f(L/2) -  \frac{ \vert f^{({2k})}(L/2)\vert}{ {(2k)}!n}  h^{2k} + O(\frac{1}{n^{(2k+1)/{2k}}}) \rt)^{n-1}dh \notag \\
&\sim \frac{ f^{n-1}(L/2)}{n^{1/{2k}}} \lt( \frac{1}{{2k}}\Gamma\lt(\frac{1}{{2k}}\rt) C(f,{2k})   + \int_0^a e^{ \frac{-h^{2k}}{C(f,{2k})^{2k}}} dh   \rt)\notag  \\
&= \frac{ f^{n-1}(L/2)}{n^{1/{2k}}}   \int_{-\infty }^a e^{ \frac{-h^{2k}}{C(f,{2k})^{2k}}} dh  . \label{concentration}
\end{align} 
Let $ h_k(a) = \int_{-\infty }^a  e^{ \frac{-h^{2k}}{C(f,{2k})^{2k}}}dh $. Since
$  I_n(L)-I_n(s) = I_n(L-s)  $ we have uniformly in $ a \in [-A,A]$
$$ I_n(L/2 + a/n^{1/{2k}} ) ( I_n(L)-I_n( L/2 + a/n^{1/{2k}})  ) \sim \frac{ f^{2n-2}(L/2)}{n^{1/k}}h_k(a)h_k(-a).$$
Since 
\begin{align*}
\int_{L/2 - A/n^{1/{2k}}}^{L/2 }  & \frac{I_n(s)}{f^{n-1}(s)} \lt( \frac{I_n(L)-I_n(s)}{I_n(L)}\rt)  ds \\
&=  \frac{1}{n^{1/{2k}}I_n(L)} \int_{- A}^{0 } \frac{I_n(L/2 + a/n^{1/{2k}} ) ( I_n(L)-I_n( L/2 + a/n^{1/{2k}})  )}{f^{n-1}(L/2 + a/n^{1/{2k}} )} da 
\end{align*}
 we get for all $A$ big enough, and for $B_n$ defined in \eqref{u_ge_2}
 \begin{align}
 B_n &\sim \frac{f^{n-1}(L/2)}{n^{3/2k}I_n(L)}\int_{- A}^{0 } h_k(a)h_k(-a) e^{\frac{a^{2k}}{C(f,{2k})^{2k}}}  da \notag \\
 &\sim  \frac{1}{n^{1/k} \frac{1}{k}\Gamma\lt(\frac{1}{{2k}}\rt)C(f,{2k})}\int_{- A}^{0 } h_k(a)h_k(-a)e^{\frac{a^{2k}}{C(f,{2k})^{2k}}}  da 
 \end{align}
 
 Also $$\int_0^{\infty}  h_k(a)h_k(-a)e^{\frac{a^{2k}}{C(f,{2k})^{2k}}}  da = C(f,{2k})^{3} C({2k}) $$
 where $C({2k}) = \int_0^{\infty}  h_{1,k}(a)h_{1,k}(-a)e^{a^{2k}} da  $
 and $ h_{1,k}(x) = \int_{-\infty}^{x} e^{-a^{2k}} da  $ (note that $ C(2k)$ is finite since $ k \ge 2$).
We have for all  $A$ large enough 
\begin{align*}
\int_{- A}^{0 } h(a)h(-a) & e^{\frac{a^{2k}}{C(f,{2k})^{2k}}}  da  +   \frac{c}{A^{2k-2}} (1 - \frac{1}{A^{2k-1}}) (1 - e^{\frac{-A^{2k}}{C(f,{2k})^{2k}}} )  \\
&\le  \liminf_{n\ri\iy} \frac{\EE[\tau_n]}{ \frac{2k}{n^{1/k}\Gamma\lt(\frac{1}{{2k}}\rt)C(f,{2k})}  }  \le  \limsup_{n\ri\iy}\frac{\EE[\tau_n]}{ \frac{2k}{n^{1/k}\Gamma\lt(\frac{1}{{2k}}\rt)C(f,{2k}) }} \\
&\le \frac{c}{A^{2k-2}}  + \int_{- A}^{0 } h(a)h(-a)e^{\frac{a^{2k}}{C(f,{2k})^{2k}}}  da , 
\end{align*} 
and so letting $A$ go to infinity, we get 
\begin{align*}
\EE[ \tau_n] \sim \frac{2kC({2k}) C^2(f,{2k})}{n^{1/k}\Gamma\lt(\frac{1}{{2k}}\rt)} =  \frac{2kC({2k}) }{n^{1/k}\Gamma\lt(\frac{1}{{2k}}\rt)} \lt( \frac{ {(2k}!f(L/2)}{ \vert f^{({2k})}(L/2)\vert} \rt)^{1/k} .
\end{align*}
\end{itemize}
\wwtbp
\begin{rem}
Note that the dominant term of $\EE[ \tau_n]$ comes from $\int_{0}^{ L/2 -A/\sqrt{n} } J_n(s)\, ds $ when $k=1$, and comes from $\int_{L/2 - A/n^{1/{2k}}}^{L/2 }  J_n(s) ds    $ when $k\ge 2 ,$ this essentially leads to two different proofs, despite the apparent similarity. We will find this feature again in the sequel.
\end{rem}
\par
\begin{pro} \label{calvar}
Let $f $ be a $C^{2k+1}$ function on $[0, L]$ satisfying Assumption \eqref{H_f} and \eqref{H_f2}. Assume that for some $  k \ge 2$, $$f(L/2 + h ) = f(L/2) -  \frac{ \vert f^{(2k)}(L/2)\vert}{ (2k)!}  h^{2k} + o(h^{2k})$$ where $f^{(2k)}(L/2) < 0 $ then

 $$\lim_{n\ri\iy}\frac{\EE[\tau_n^2 ] }{\EE[\tau_n ]^2 } = 1 +  \frac{2}{c_k} \int_{- \infty}^{\infty}  h^2_k(-a) e^{\frac{a^{2k}}{C(f,2k)^{2k}}} \int_{- \infty}^{  a} h^2_k(\tilde{a}) e^{\frac{\tilde{a}^{2k}}{C(f,2k)^{2k}}} d\tilde{a}  da,$$
 where $c_k=C(2k)^2 C(f,2k)^{3/k}$, and $C(2k)$ and $ C(f,2k)$ are defined in \eqref{C1} and \eqref{C2}.
 In particular 
 $$ \Var(\tau_n) = O(\EE[\tau_n]^2), \quad \Var(\tau_n) \neq o(\EE[\tau_n]^2).$$
\end{pro}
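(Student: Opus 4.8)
The plan is to reduce the statement to the asymptotics of the variance alone, and then to run on $\Var(\tau_n)$ the same Laplace-type analysis that was used for $\EE[\tau_n]$ in Proposition \ref{prop-mean}. Since $\EE[\tau_n^2]=\Var(\tau_n)+\EE[\tau_n]^2$ and, by Proposition \ref{prop-mean}, $\EE[\tau_n]\sim \frac{2kC(2k)C(f,2k)^2}{n^{1/k}\Gamma(1/2k)}$, it is enough to prove that $n^{2/k}\Var(\tau_n)$ converges and to compute its limit: the ``in particular'' part will then follow at once, the limit being finite (so that $\Var(\tau_n)=O(\EE[\tau_n]^2)$) and strictly positive (so that $\Var(\tau_n)\neq o(\EE[\tau_n]^2)$). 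For $\Var(\tau_n)$ I shall use the formula of Proposition \ref{var}, $\Var(\tau_n)=2\int_0^L J_n(s)\,(u_{n,1}'(s))^2\,ds$ with $J_n$ as in \eqref{J_def_ge}, together with the closed expression $u_{n,1}'(s)=-\frac{f^{n-1}(s)}{I_n^2(s)}\int_0^s\frac{I_n^2(u)}{f^{n-1}(u)}\,du$ coming from Proposition \ref{Poisson} with $g=\un$; an equivalent route is to expand $\EE[\tau_n^2]=2\,G_n[u_{n,1}](0)$ directly, which brings in the profile of $u_{n,1}$ near $L/2$. The whole scheme parallels the proof of Proposition \ref{var-ge} (the case $k=1$), the new points being that one now needs the exact constant, not merely the order, and that, contrary to the case $k=1$ where $\Var(\tau_n)=o(\EE[\tau_n]^2)$, here $\Var(\tau_n)$ and $\EE[\tau_n]^2$ turn out to be of the same order $n^{-2/k}$.

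The core is a single scaling statement on the window $s=L/2+an^{-1/2k}$. Writing $\Theta(a):=\int_{-\infty}^a h_k^2(\tilde a)\,e^{\tilde a^{2k}/C(f,2k)^{2k}}\,d\tilde a$ and combining \eqref{I_L_ge2}, \eqref{concentration}, \eqref{C3} and \eqref{E40}, one has on bounded $a$-intervals $I_n(L/2+an^{-1/2k})\sim n^{-1/2k}f^{n-1}(L/2)\,h_k(a)$, $I_n(L)-I_n(L/2+an^{-1/2k})=I_n(L/2-an^{-1/2k})\sim n^{-1/2k}f^{n-1}(L/2)\,h_k(-a)$, $f^{n-1}(L/2+an^{-1/2k})\sim f^{n-1}(L/2)\,e^{-a^{2k}/C(f,2k)^{2k}}$, and, splitting $\int_0^s\frac{I_n^2(u)}{f^{n-1}(u)}\,du$ at $L/2-An^{-1/2k}$ and discarding the part before that point (negligible by the integration-by-parts bound already used for $A_n$ in Proposition \ref{var-ge}), $\int_0^{L/2+an^{-1/2k}}\frac{I_n^2(u)}{f^{n-1}(u)}\,du\sim n^{-3/2k}f^{n-1}(L/2)\,\Theta(a)$. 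Inserting these into $J_n$ and into $u_{n,1}'$ produces scaling limits $J_n(L/2+an^{-1/2k})\sim n^{-1/2k}\Phi(a)$ and $u_{n,1}'(L/2+an^{-1/2k})\sim n^{-1/2k}\Psi(a)$ with $\Phi,\Psi$ explicit in $h_k$ and $\Theta$. Changing variables $s=L/2+an^{-1/2k}$ in the window $[L/2-An^{-1/2k},L/2+An^{-1/2k}]$, letting $n\to\infty$ and then $A\to\infty$, the window contributes $n^{-2/k}\int_{-\infty}^\infty\Phi(a)\Psi(a)^2\,da$ to $\Var(\tau_n)$; the decisive feature, responsible for the difference with the case $k=1$, is that although $\Theta(+\infty)=+\infty$, these window integrals over $\RR$ converge precisely because $k\ge 2$ (their tails decay like $|a|^{-(6k-3)}$). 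Dividing by $\EE[\tau_n]^2$ and absorbing the constants with the help of the identity $\int_0^\infty h_k(a)h_k(-a)\,e^{a^{2k}/C(f,2k)^{2k}}\,da=C(f,2k)^3C(2k)$ established in Proposition \ref{prop-mean}, one obtains, after an integration by parts, the announced value of $\lim_n \Var(\tau_n)/\EE[\tau_n]^2$.

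It remains to dispose of the two bulk regions. On the left bulk $[0,L/2-An^{-1/2k}]$ one has $|u_{n,1}'(s)|\le \frac{f(s)}{nf'(s)}(1-A^{-(2k-1)})^{-2}$ (by the integration-by-parts estimate for $\int_0^s f^{n+1}/(f')^2$ from Proposition \ref{var-ge}, using $f''\le 0$) and $J_n(s)\le \frac{f(s)}{nf'(s)}$; since $f'(s)\sim\frac{|f^{(2k)}(L/2)|}{(2k-1)!}(L/2-s)^{2k-1}$ near $L/2$, the left-bulk contribution is then $O(n^{-2/k}A^{-(6k-4)})$. For the right bulk, the symmetry $J_n(s)=J_n(L-s)$ turns it into $\int_0^{L/2-An^{-1/2k}}J_n(s)\,(u_{n,1}'(L-s))^2\,ds$, and one bounds $|u_{n,1}'(L-s)|$ by splitting $\int_0^{L-s}\frac{I_n^2(u)}{f^{n-1}(u)}\,du$ into its Laplace bump near $L/2$ (of order $n^{-3/2k}f^{n-1}(L/2)\,\Theta(A)\,(f(s)/f(L/2))^{n-1}$) and a near-pole part, the latter handled by integration by parts near $\wi L$ exactly as for $C_n$ in Proposition \ref{var-ge}; here one must keep the decay $(f(s)/f(L/2))^{n-1}$, because $(f(L/2-An^{-1/2k})/f(L/2))^{n-1}\sim e^{-A^{2k}/C(f,2k)^{2k}}$ exactly cancels the growth $\Theta(A)\sim c\,A^{-(2k-1)}e^{A^{2k}/C(f,2k)^{2k}}$, leaving $|u_{n,1}'(L-s)|=O(n^{-1/2k}A^{-(2k-1)})$ uniformly, hence a right-bulk contribution $O(n^{-2/k}A^{-(6k-4)})$ as well. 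Letting $n\to\infty$ then $A\to\infty$, both bulks drop out, and the window limit computed above gives the claim, together with its positivity (finiteness again using $k\ge 2$, as for $C(2k)$).

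The main obstacle is this last, right-bulk, estimate, together with the convergence bookkeeping at the edge of the window. Past $L/2$ the integrand $I_n^2/f^{n-1}$ keeps growing (this is precisely why $\Theta(a)\to\infty$ as $a\to\infty$), so one is naturally led to write down several integrals that are individually divergent and whose divergences only cancel in combination; the crude replacement of $f^{n-1}(L-s)$ by $f^{n-1}(L/2)$ which sufficed for $k=1$ no longer works here, since it would produce a bound blowing up with $A$. Making this cancellation rigorous, simultaneously with the entrance-boundary behaviour at the pole $\wi L$ (where $u_{n,1}'(t)\sim (L-t)/(n-2)$), is the delicate part of the argument, and is where the $C^{2k+1}$-regularity at $L/2$, the strict inequality $f^{(2k)}(L/2)<0$, and the boundary conditions of \eqref{H_f} at $\wi 0$ and $\wi L$ are used.
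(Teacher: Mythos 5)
Your proposal is correct, and it reaches the paper's limit by a genuinely different (though closely related) route. The paper does not analyze $\Var(\tau_n)=2\int_0^L J_n(s)(u_{n,1}'(s))^2\,ds$ as you do; instead it expands $\EE[\tau_n^2]=2G_n[u_{n,1}](0)$, integrates by parts twice to extract $\tfrac12\EE[\tau_n]^2$ \emph{exactly}, and then runs the Laplace analysis on the remaining double integral $\int_0^L \frac{(I_n(L)-I_n(s))^2}{f^{n-1}(s)}\int_0^s\frac{I_n^2(t)}{f^{n-1}(t)}\,dt\,ds$, split into six pieces ($A_n$, $B_n^{(1)}$, $B_n^{(2)}$, $C_n^{(1)}$, $C_n^{(2)}$, $C_n^{(3)}$) whose dominant piece $B_n^{(2)}$ produces the stated integral in exactly the announced form (your ``equivalent route'' aside is in fact the paper's route). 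Your version keeps the representation of Proposition \ref{var}, i.e.\ the same integrand as in the $k=1$ proof of Proposition \ref{var-ge}, which forces two extra steps that the paper's decomposition avoids: (i) the window limit comes out as $\frac{2}{H}\int_{\RR}\frac{h_k(-a)\,\Theta(a)^2}{e^{a^{2k}/C^{2k}}h_k^3(a)}\,da$ with $H=h_k(+\infty)$, and one must check the profile-level integration-by-parts identity $\int_{\RR}(H-h_k)^2e^{a^{2k}/C^{2k}}\Theta\,da=H\int_{\RR}\frac{(H-h_k)\Theta^2}{e^{a^{2k}/C^{2k}}h_k^3}\,da$ (it does hold: differentiate $(H-h_k)^2/h_k^2$, whose derivative is $-2H(H-h_k)e^{-a^{2k}/C^{2k}}/h_k^3$, and the boundary terms vanish), so your limit agrees with the paper's; (ii) on the right bulk you must retain the factor $(f(s)/f(L/2))^{n-1}\le e^{-A^{2k}/C^{2k}}(1+o(1))$ to cancel the growth of $\theta_A(A)$, giving $|u_{n,1}'(L-s)|=O(n^{-1/2k}A^{-(2k-1)})$ and a right-bulk contribution $O(n^{-2/k}A^{-(6k-4)})$ — this is genuinely needed here (for $k=1$ a bound $c(A)n^{-1/2}$ sufficed because only the order was wanted), and your treatment of it is sound, whereas the paper's split dodges it by reducing $C_n^{(2)}$ to $B_n^{(1)}$ by symmetry and bounding $C_n^{(3)}$ directly with $f''\le0$. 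One small inaccuracy: the window integrals' tails decay like $|a|^{-(6k-3)}$, which is integrable already for $k=1$, so their convergence is not what requires $k\ge2$; the role of $k\ge2$ is the finiteness of $C(2k)$, hence the validity of Proposition \ref{prop-mean} and the fact that the window (rather than the bulk) carries the mean — which you do acknowledge elsewhere, so this is a cosmetic slip, not a gap.
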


\begin{proof}
Using Proposition \ref{mean-var} and integration by parts we get
\bq \frac{1}{2} \EE[\tau_n^2 ] &=& \int_0^L  \frac{f^{n-1}(t)}{I_n^2(t)}  \lt(\int_0^t  \frac{I_n^2(s)}{f^{n-1}(s)} u_{n,1}(s) ds \rt) dt. \\
&=& \int_0^L \frac{I_n(s)}{f^{n-1}(s)} \lt( \frac{I_n(L)-I_n(s)}{I_n(L)}\rt) u_{n,1} (s) ds .\\
\eq
Also by Proposition \ref{Poisson} and integration by parts we have
\bq
u_{n,1}(r) &=& \int_r^L  \frac{f^{n-1}(t)}{I_n^2(t)}  \lt(\int_0^t  \frac{I_n^2(s)}{f^{n-1}(s)}  ds \rt) dt \\
&=& \frac{1}{I_n(r)}  \int_0^r  \frac{I_n^2(s)}{f^{n-1}(s)}  ds -  \frac{1}{I_n(L)}  \int_0^L  \frac{I_n^2(s)}{f^{n-1}(s)}  ds + \int_r^L  \frac{I_n(s)}{f^{n-1}(s)}  ds \\
&=& \lt( \frac{I_n(L) - I_n(r)}{I_n(L)I_n(r)} \rt) \int_0^r  \frac{I_n^2(s)}{f^{n-1}(s)}  ds + \int_r^L  \frac{I_n(s)}{f^{n-1}(s)} \lt( \frac{I_n(L) - I_n(s)}{I_n(L)} \rt)  ds .\\ 
\eq
It follows that
\bq \frac{1}{2} \EE[\tau_n^2 ] &=& \int_0^L \frac{1}{f^{n-1}(s)} \lt( \frac{I_n(L)-I_n(s)}{I_n(L)}\rt)^2 
 \int_0^s \frac{I_n^2(t)}{f^{n-1}(t)}  dt  ds \\
&+&\int_0^L \frac{I_n(s)}{f^{n-1}(s)} \lt( \frac{I_n(L)-I_n(s)}{I_n(L)}\rt)  \int_s^L  \frac{I_n(t)}{f^{n-1}(t)} \lt( \frac{I_n(L) - I_n(t)}{I_n(L)} \rt)  dt ds .\\
&=& \int_0^L \frac{1}{f^{n-1}(s)} \lt( \frac{I_n(L)-I_n(s)}{I_n(L)}\rt)^2  \int_0^s \frac{I_n^2(t)}{f^{n-1}(t)}  dt  ds  \\
&+& \frac{1}{2} \lt( \int_0^L  \frac{I_n(t)}{f^{n-1}(t)} \lt( \frac{I_n(L) - I_n(t)}{I_n(L)} \rt)  dt \rt)^2\\
&=& \int_0^L \frac{1}{f^{n-1}(s)} \lt( \frac{I_n(L)-I_n(s)}{I_n(L)}\rt)^2  \int_0^s \frac{I_n^2(t)}{f^{n-1}(t)}  dt  ds  + \frac{1}{2}  \EE[\tau_n]^2, \\
\eq
and so
\bq
\frac{\EE[\tau_n^2 ] }{\EE[\tau_n ]^2 } &=& 1 + \frac{2 \int_0^L \frac{1}{f^{n-1}(s)} \lt( \frac{I_n(L)-I_n(s)}{I_n(L)}\rt)^2  \int_0^s \frac{I_n^2(t)}{f^{n-1}(t)}  dt  ds }{\EE[\tau_n ]^2}\\
&=& 1 + \frac{2 \int_0^L \frac{1}{f^{n-1}(s)} \lt( I_n(L)-I_n(s)\rt)^2  \int_0^s \frac{I_n^2(t)}{f^{n-1}(t)}  dt  ds }{\EE[\tau_n ]^2{I_n(L)}^2}\\
\eq
Note that the numerator in the right hand side of the above first equation is the variance of $ \tau_n$.
Let us write for all $A$ large enough and for all $ n $ large enough:
 
\begin{align}
& &\int_0^L \frac{\lt( I_n(L)-I_n(s)\rt)^2 }{f^{n-1}(s)}  \int_0^s \frac{I_n^2(t)}{f^{n-1}(t)}  dt  ds = \underbrace{\int_0^{L/2- A/n^{1/2k}} \frac{\lt( I_n(L-s)\rt)^2 }{f^{n-1}(s)}  \int_0^s \frac{I_n^2(t)}{f^{n-1}(t)}  dt  ds }_{A_n} \notag \\
 &+& \underbrace{\int_{L/2- A/n^{1/2k}}^{L/2+ A/n^{1/2k}} \frac{\lt( I_n(L-s)\rt)^2 }{f^{n-1}(s)}  \int_0^s \frac{I_n^2(t)}{f^{n-1}(t)}  dt  ds }_{B_n} 
 +  \underbrace{\int_{L/2+ A/n^{1/2k}}^{L} \frac{\lt( I_n(L-s)\rt)^2 }{f^{n-1}(s)}  \int_0^s \frac{I_n^2(t)}{f^{n-1}(t)}  dt  ds }_{C_n} \notag \\ \label{no-cut}
\end{align}

\begin{itemize}
\item Let us start by the term $ A_n$ in \eqref{no-cut}, using \eqref{I_unif_gen}, for $ A$ big enough and for all $n$ sufficiently large we have, for all $s \in [0, L/2 -A/n^{1/2k}], $ 
\begin{equation}\label{I_carre} 
 \frac{f^{n+1}(s)}{n^2 (f'(s))^2}  (1 - \frac{1}{A^{2k-1}})^2   \le \frac{I^2_n(s)}{f^{n-1}(s)}\le \frac{f^{n+1}(s)}{n^2 (f'(s))^2}.
\end{equation}
Let $ W_{n+1}(t)  = \int_0^t  \frac{f^{n+1}(s)}{ (f'(s))^2}ds,$
after integration by parts, we have:
\begin{align*}
W_{n+1}(t) &=  \int_0^t  \frac{f^{n+1}(s)f'(s)}{ (f'(s))^3}ds\\
&= \frac{f^{n+2}(t)}{ (n+2) (f'(t))^3} + 3 \int_0^t\frac{f^{n+2}(s)f''(s)}{ (n+2) (f'(s))^4} ds\\
\end{align*}

Since $f''$ is negative, 
 \begin{equation*}
      W_{n+1}(t) \le \frac{f^{n+2}(t)}{ (n+2) (f'(t))^3}.
 \end{equation*}
It follows that:
\bqn{int_1}
\int_0^s \frac{I^2_n(t)}{f^{n-1}(t)} dt \le \frac{f^{n+2}(s)}{ n^3 (f'(s))^3}.
\eqn
Hence since $ \frac{1}{f'(s)} \sim_{s \ri L/2-} \frac{(2k-1)!}{\vert f^{(2k)}(L/2)\vert (L/2-s)^{ 2k-1}} , $ we have 
\bq
 A_n &\le & \frac{I_n(L)^2}{n^3}\int_0^{L/2- A/n^{1/2k}} \frac{f^{3}(s)}{(f'(s))^3}   ds \\
 &\le & \frac{I_n(L)^2f^{3}(L/2) }{n^3}\int_0^{L/2- A/n^{1/2k}} \frac{1}{(f'(s))^3}   ds \\
 &\sim &  c \frac{I_n(L)^2 }{n^{2/k}A^{6k-4}}. 
\eq
Since by Proposition \ref{prop-mean}, $\EE[ \tau_n] \sim  \frac{c }{n^{1/k}} $, it follows that for all $A$ big enough,
\bqn{C4}
\limsup_{n\ri\iy}  \frac{A_n}{\EE[\tau_n ]^2{I_n(L)}^2}  &\le &\frac{c}{A^{6k-4} } \label{A_n}
\eqn

\item for the term $ B_n$  in \eqref{no-cut}: for $A$ big enough, for all $n$ sufficiently large we write 
\bq
B_n &=& \int_{L/2- A/n^{1/2k}}^{L/2+ A/n^{1/2k}} \frac{\lt( I_n(L-s)\rt)^2 }{f^{n-1}(s)}  \int_0^s \frac{I_n^2(t)}{f^{n-1}(t)}  dt  ds \\
&=& \underbrace{\int_{L/2- A/n^{1/2k}}^{L/2+ A/n^{1/2k}} \frac{\lt( I_n(L-s)\rt)^2 }{f^{n-1}(s)}  \int_0^{L/2- A/n^{1/2k}} \frac{I_n^2(t)}{f^{n-1}(t)}  dt  ds}_{B_n^{(1)}}\\
&+& \underbrace{\int_{L/2- A/n^{1/2k}}^{L/2+ A/n^{1/2k}} \frac{\lt( I_n(L-s)\rt)^2 }{f^{n-1}(s)}  \int_{L/2- A/n^{1/2k}}^s \frac{I_n^2(t)}{f^{n-1}(t)}  dt  ds}_{B_n^{(2)}}.\\
\eq
 \begin{itemize}
 \item Let us estimate the term $B_n^{(1)} $. 
 Using \eqref{C3} and \eqref{int_1} we have,
 \begin{align*}
 \int_0^{L/2- A/n^{1/2k}} \frac{I^2_n(t)}{f^{n-1}(t)} dt &\le \frac{f^{n+2}({L/2- A/n^{1/2k}})}{ n^3 (f'({L/2- A/n^{1/2k}}))^3}
  \\
 &\sim  c \frac{ f^{n+2}(L/2) e^{\frac{-A^{2k}}{C(f,2k)^{2k}}}}{ n^3(\frac{A^{2k-1}}{ n^{(2k-1)/2k}})^3  }
 \\
&\sim  c \frac{ f^{n+2}(L/2) e^{\frac{-A^{2k}}{C(f,2k)^{2k}}}}{ n^{3/2k}A^{6k-3}  }. 
 \end{align*}
 So using \eqref{concentration}, we have uniformly in  $ a \in [-A,A] $
   \begin{equation} I_n(L/2 + a/n^{1/2k}) \sim  \frac{ f^{n-1}(L/2)}{n^{1/{2k}}} h_k(a). \label{In}
   \end{equation}
   Since  uniformly in  $ a \in [-A,A] $ we have, recall \eqref{C3},   
   \begin{equation} f^n(L/2 - a/n^{1/2k} )\sim f^n(L/2) e^{\frac{-a^{2k}}{C(f,2k)^{2k}}}, \label{fn}
    \end{equation}
   we deduce that
 \bq
 B_n^{(1)} &\le & c \frac{1}{n^{1/2k}} \int_{-A}^{A} \frac{I^2_n(L/2- a/n^{1/2k})}{f^{n-1}( L/2 + a/n^{1/2k})} \frac{ f^{n+2}(L/2) e^{\frac{-A^{2k}}{C(f,2k)^{2k}}}}{ n^{3/2k}A^{6k-3}  } da \\
 & \sim & c  \frac{   f^{n+2}(L/2) e^{\frac{-A^{2k}}{C(f,2k)^{2k}}}}{n^{2/k} A^{6k-3} }\int_{-A}^{A} \frac{I^2_n(L/2- a/n^{1/2k})}{f^{n-1}( L/2 + a/n^{1/2k})} da \\
 & \sim &  c  \frac{   f^{n+2}(L/2) e^{\frac{-A^{2k}}{C(f,2k)^{2k}}}}{n^{2/k} A^{6k-3} }\int_{-A}^{A} \lt( \frac{f^{n-1}(L/2) h_k(-a)}{n^{1/2k}} \rt)^2 \frac{1}{f^{n-1}(L/2) e^{\frac{-a^{2k}}{C(f,2k)^{2k}}}} da \\
 &\sim & c  \frac{   f^{2n+1}(L/2) e^{\frac{-A^{2k}}{C(f,2k)^{2k}}}}{n^{3/k} A^{6k-3} }\int_{-A}^{A} \lt(  h_k(-a)\rt)^2  e^{\frac{a^{2k}}{C(f,2k)^{2k}}} da \\
 & \le & c  \frac{   f^{2n+1}(L/2) }{n^{3/k} A^{6k-3} }\int_{-A}^{A} \lt(  h_k(-a)\rt)^2   da \\
 & \le & 2 c \parallel h_k \parallel_{\infty}^2 \frac{   f^{2n+1}(L/2) }{n^{3/k} A^{6k-4} }. \\
 \eq
Also by \eqref{I_L_ge2} and Proposition \ref{prop-mean} we have $$ \EE[\tau_n ]^2{I_n(L)}^2 \sim \frac{c_kf^{2n-2}(L/2)}{n^{3/k}} .$$
Note that the constant $ c_k = C(2k)^2 C(f,2k)^{3/k}$, where $C(2k)$ and $ C(f,2k)$ are defined in \eqref{C1} and \eqref{C2}.
It  follows that for all $A$ big enough,
\bqn{C5}
\limsup_{n\ri\iy} \frac{B_n^{(1)}}{\EE[\tau_n ]^2{I_n(L)}^2}& \le& \frac{c}{A^{6k-4} } \label{tildB_n}
\eqn

\item Let us estimate the term $B_n^{(2)} $. Using \eqref{fn} and \eqref{In} we have
\bq
B_n^{(2)} & =&  \int_{L/2- A/n^{1/2k}}^{L/2+ A/n^{1/2k}} \frac{\lt( I_n(L-s)\rt)^2 }{f^{n-1}(s)}  \int_{L/2- A/n^{1/2k}}^s \frac{I_n^2(t)}{f^{n-1}(t)}  dt  ds \\
&=& \frac{1}{n^{1/2k}} \int_{- A}^{A} \frac{\lt( I_n( L/2- a/n^{1/2k})\rt)^2 }{f^{n-1}(  L/2+ a/n^{1/2k})}  \int_{L/2- A/n^{1/2k}}^{ L/2 + a/n^{1/2k}} \frac{I_n^2(t)}{f^{n-1}(t)}  dt  da \\
&=& \frac{1}{n^{1/k}} \int_{- A}^{A} \frac{\lt( I_n( L/2- a/n^{1/2k})\rt)^2 }{f^{n-1}(  L/2+ a/n^{1/2k})}  \int_{- A}^{  a} \frac{I_n^2( L/2 + \tilde{a}/n^{1/2k})}{f^{n-1}(L/2 + \tilde{a}/n^{1/2k})}  d\tilde{a}  da \\
& \sim & \frac{1}{n^{1/k}} \int_{- A}^{A} \frac{\lt( \frac{f^{n-1}(L/2)h_k(-a)}{n^{1/2k}}  \rt)^2 }{f^{n-1}(  L/2) e^{\frac{-a^{2k}}{C(f,2k)^{2k}}} }  \int_{- A}^{  a} \frac{ \lt( \frac{f^{n-1}(L/2)h_k(\tilde{a})}{n^{1/2k}}  \rt)^2}{f^{n-1}(  L/2) e^{\frac{-\tilde{a}^{2k}}{C(f,2k)^{2k}}} }  d\tilde{a}  da \\
&\sim & \frac{f^{2n-2}(  L/2)}{n^{3/k}} \int_{- A}^{A}  h^2_k(-a) e^{\frac{a^{2k}}{C(f,2k)^{2k}}} \int_{- A}^{  a} h^2_k(\tilde{a}) e^{\frac{\tilde{a}^{2k}}{C(f,2k)^{2k}}} d\tilde{a}  da
\eq
It follows that for all $A$  big enough
\bqn{C6}
\lim_{n\ri\iy} \frac{B_n^{(2)}}{\EE[\tau_n ]^2{I_n(L)}^2} &= &\frac{ C(A) }{c_k}, \label{barB_n}
\eqn
where $ C(A) = \int_{- A}^{A}  h^2_k(-a) e^{\frac{a^{2k}}{C(f,2k)^{2k}}} \int_{- A}^{  a} h^2_k(\tilde{a}) e^{\frac{\tilde{a}^{2k}}{C(f,2k)^{2k}}} d\tilde{a}  da$
 \end{itemize}
 
 \item for the term $ C_n$  in \eqref{no-cut}: for $A$ big enough, and for all $n$ sufficiently large we write
  
 \bq
 C_n & =& \int_{L/2+ A/n^{1/2k}}^{L} \frac{\lt( I_n(L-s)\rt)^2 }{f^{n-1}(s)}  \int_0^s \frac{I_n^2(t)}{f^{n-1}(t)}  dt  ds  \\
 &=& \underbrace{\int_{L/2+ A/n^{1/2k}}^{L} \frac{\lt( I_n(L-s)\rt)^2 }{f^{n-1}(s)}  \int_0^ {L/2- A/n^{1/2k}}\frac{I_n^2(t)}{f^{n-1}(t)}  dt  ds }_{C_n^{(1)}}\\
 &+&  \underbrace{\int_{L/2+ A/n^{1/2k}}^{L} \frac{\lt( I_n(L-s)\rt)^2 }{f^{n-1}(s)}  \int_{L/2- A/n^{1/2k}}^{L/2+ A/n^{1/2k}} \frac{I_n^2(t)}{f^{n-1}(t)}  dt  ds }_{C_n^{(2)}}\\
 &+& \underbrace{\int_{L/2+ A/n^{1/2k}}^{L} \frac{\lt( I_n(L-s)\rt)^2 }{f^{n-1}(s)}  \int_{L/2+ A/n^{1/2k}}^s \frac{I_n^2(t)}{f^{n-1}(t)}  dt  ds }_{C_n^{(3)}}.
 \eq
 \begin{itemize}
 \item For $C_n^{(1)}$, using  \eqref{int_1}, we have 
 \bq
 C_n^{(1)} =  \lt(\int_0^ {L/2- A/n^{1/2k}}\frac{I_n^2(t)}{f^{n-1}(t)}  dt \rt)^2 \le 
      \lt(  c \frac{ f^{n+2}(L/2) e^{\frac{-A^{2k}}{C(f,2k)^{2k}}}}{ n^{3/2k}A^{6k-3}  }  \rt)^2,
   \eq  and so
    \bqn{C7}
\limsup_{n\ri\iy} \frac{C_n^{(1)}}{\EE[\tau_n ]^2{I_n(L)}^2} & \le &\frac{c}{A^{12k-6} } \label{tildC_n}
\eqn
      
 \item For $C_n^{(2)} $, note that after a change of variable it is equal to $ B_n^{(1)}$
hence
\bqn{C8}
\limsup_{n\ri\iy} \frac{C_n^{(2)}}{\EE[\tau_n ]^2{I_n(L)}^2} &\le& \frac{c}{A^{6k-4} } \eqn

 \item For $C_n^{(3)} $, since $f''\le 0$ and by \eqref{I_carre} we have
 \bq
 C_n^{(3)}  &=& \int_{L/2+ A/n^{1/2k}}^{L} \frac{\lt( I_n(L-s)\rt)^2 }{f^{n-1}(s)}  \int_{L/2+ A/n^{1/2k}}^s \frac{I_n^2(t)}{f^{n-1}(t)}  dt  ds\\
 & = & \int_{0}^{L/2- A/n^{1/2k}} \frac{\lt( I_n(s)\rt)^2 }{f^{n-1}(s)}  \int_{L/2+ A/n^{1/2k}}^{L-s} \frac{I_n^2(t)}{f^{n-1}(t)}  dt  ds\\
 &= & \int_{0}^{L/2- A/n^{1/2k}} \frac{\lt( I_n(s)\rt)^2 }{f^{n-1}(s)}  \int_s^{L/2- A/n^{1/2k}} \frac{I_n^2(L-t)}{f^{n-1}(t)}  dt  ds\\
 &\le & I_n^2(L)\int_{0}^{L/2- A/n^{1/2k}} \frac{\lt( I_n(s)\rt)^2 }{f^{n-1}(s)}  \int_s^{L/2- A/n^{1/2k}} \frac{f'(t)}{f^{n-1}(t)f'(t)}  dt  ds\\
 &\le & \frac{I_n^2(L)}{f'(L/2- A/n^{1/2k})} \int_{0}^{L/2- A/n^{1/2k}} \frac{\lt( I_n(s)\rt)^2 }{f^{n-1}(s)}  \int_s^{L/2- A/n^{1/2k}} \frac{f'(t)}{f^{n-1}(t)}  dt  ds\\
  &\le & \frac{I_n^2(L)}{(n-2)f'(L/2- A/n^{1/2k})} \int_{0}^{L/2- A/n^{1/2k}} \frac{\lt( I_n(s)\rt)^2 }{f^{n-1}(s)}  \frac{1}{f^{n-2}(s)}    ds\\
   &\le & \frac{I_n^2(L)}{(n-2)f'(L/2- A/n^{1/2k})} \int_{0}^{L/2- A/n^{1/2k}} \lt(\frac{ I_n(s) }{f^{n-1}(s)} \rt)^2 f(s)   ds\\
    &\le & \frac{I_n^2(L)}{(n-2)f'(L/2- A/n^{1/2k})} \int_{0}^{L/2- A/n^{1/2k}} \frac{ f^3(s)   }{n^2(f'(s))^2}  ds\\
   &\le & \frac{I_n^2(L)f^3(L/2)}{n^2(n-2)f'(L/2- A/n^{1/2k})} \int_{0}^{L/2- A/n^{1/2k}} \frac{  1  }{(f'(s))^2}  ds\\ 
   &\sim & c \frac{I_n^2(L)f^3(L/2)}{n^3( A/n^{1/2k}))^{2k-1}} \frac{n^{2- 3/2k}}{A^{4k-3}}\\
   &\sim &  c \frac{I_n^2(L)f^3(L/2)}{n^{2/k}A^{6k-4}}.\\
 \eq 
 
 It follows that 
 \bqn{C9}
\limsup_{n\ri\iy} \frac{C_n^{(3)}}{\EE[\tau_n ]^2{I_n(L)}^2} & \le &\frac{c}{A^{6k-4} } 
\eqn
 \end{itemize}
 
 Using equations \eqref{C4},\eqref{C5},\eqref{C6}, \eqref{C7},\eqref{C8} ,\eqref{C9} we get, for all $A$ big enough
 
 \bq
 \frac{ 2 C(A) }{c_k}= \liminf_{n\ri\iy} & & 2 \frac{B_n^{(1)}}{\EE[\tau_n ]^2{I_n(L)}^2} \\ &\le & 
 \liminf_{n\ri\iy}  \frac{2 \int_0^L \frac{1}{f^{n-1}(s)} \lt( I_n(L)-I_n(s)\rt)^2  \int_0^s \frac{I_n^2(t)}{f^{n-1}(t)}  dt  ds }{\EE[\tau_n ]^2{I_n(L)}^2}\\
 &\le& \limsup_{n\ri\iy}\frac{2 \int_0^L \frac{1}{f^{n-1}(s)} \lt( I_n(L)-I_n(s)\rt)^2  \int_0^s \frac{I_n^2(t)}{f^{n-1}(t)}  dt  ds }{\EE[\tau_n ]^2{I_n(L)}^2}\\
 & \le &  2 \limsup_{n\ri\iy}\frac{A_n + B_n^{(2)} + B_n^{(1)} +C_n^{(1)} + C_n^{(2)} + C_n^{(3)}}{\EE[\tau_n ]^2{I_n(L)}^2} \\
&\le & \frac{c}{A^{6k-4} } +   \frac{ 2 C(A) }{c_k}\\
 \eq  
 \end{itemize}
 Passing to the limit when $A$ goes to infinity, and using dominated convergence theorem , we get
 
 \begin{align*}
 & \lim_{n\ri\iy}2  \frac{ \int_0^L \frac{1}{f^{n-1}(s)}   \lt( I_n(L)-I_n(s)\rt)^2  \int_0^s \frac{I_n^2(t)}{f^{n-1}(t)}  dt  ds  }{\EE[\tau_n ]^2{I_n(L)}^2} \\
 & \quad \quad = \frac{2}{c_k} \int_{- \infty}^{\infty}  h^2_k(-a) e^{\frac{a^{2k}}{C(f,2k)^{2k}}} \int_{- \infty}^{  a} h^2_k(\tilde{a}) e^{\frac{\tilde{a}^{2k}}{C(f,2k)^{2k}}} d\tilde{a}  da,
 \end{align*}
 and 
 
 $$\lim_{n\ri\iy}\frac{\EE[\tau_n^2 ] }{\EE[\tau_n ]^2 } = 1 +  \frac{2}{c_k} \int_{- \infty}^{\infty}  h^2_k(-a) e^{\frac{a^{2k}}{C(f,2k)^{2k}}} \int_{- \infty}^{  a} h^2_k(\tilde{a}) e^{\frac{\tilde{a}^{2k}}{C(f,2k)^{2k}}} d\tilde{a}  da,$$ 

It follows that  
\bq\Var(\tau_n)& \sim&  \EE[\tau_n]^2 \Big(\frac{2}{c_k} \int_{- \infty}^{\infty}  h^2_k(-a) e^{\frac{a^{2k}}{C(f,2k)^{2k}}} \int_{- \infty}^{  a} h^2_k(\tilde{a}) e^{\frac{\tilde{a}^{2k}}{C(f,2k)^{2k}}} \Big)\eq
and $ \Var(\tau_n) \neq o(\EE[\tau_n]^2).$\par
\wwtbp
\end{proof}

\begin{theo}\label{nocut}
Let $f $ be a $C^{2k+1}$ function on $[0, L]$  satisfying Assumptions \eqref{H_f}  and \eqref{H_f2}. Assume that for some $  k \ge 2$, \bq
f(L/2 + h ) &=& f(L/2) +  \frac{  f^{(2k)}(L/2)}{ (2k)!}  h^{2k} + o(h^{2k})\eq where $f^{(2k)}(L/2) < 0 $.
For any $n\in\NN\setminus\{1\}$, let  $X_n\df(X_n(t))_{t\geq 0}$ be the Brownian motion described in Definition \ref{BM}. Then the family of diffusion processes $(X_n)_{n\in\NN\setminus\{1\}}$ has no cut-off in separation.
\end{theo}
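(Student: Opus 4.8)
The plan is to argue by contradiction, reducing everything to Proposition~\ref{calvar}. Suppose $(X_n)_{n\in\NN\setminus\{1\}}$ had a cut-off in separation with some mixing times $(a_n)$ and windows $(b_n)$, $0<b_n\le a_n$. Since $\tau_n$ is a sharp strong stationary time, $\fs(\cL(X_n(t)),\cU_n)=\PP[\tau_n>t]$, so the two defining limits of Section~\ref{cutoff} read $\PP[\tau_n>a_n+rb_n]\to0$ and $\PP[\tau_n>a_n-rb_n]\to1$. Using only $b_n\le a_n$, for any $\epsilon\in(0,1)$ one gets $\PP[\tau_n>(1+\epsilon)a_n]\le\PP[\tau_n>a_n+\epsilon b_n]\to0$ and $\PP[\tau_n>(1-\epsilon)a_n]\ge\PP[\tau_n>a_n-\epsilon b_n]\to1$, that is $\tau_n/a_n\to1$ in probability. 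The task is then to show this concentration is incompatible with $\EE[\tau_n^2]/\EE[\tau_n]^2\to1+c$ for the explicit $c>0$ of Proposition~\ref{calvar}.

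The first ingredient I would establish is the a priori estimate $\EE[\tau_n^k]\le k!\,\EE[\tau_n]^k$, valid for all $k\in\ZZ_+$ and all $n$. By Proposition~\ref{mean-var}, $\EE[\tau_n^k]=k!\,G_n^{\circ k}[\un](0)$; from the explicit formula \eqref{phinr} the operator $G_n$ is positivity preserving, and (as noted in the proof of Proposition~\ref{mean-var}) $G_n[g]$ is maximized at $0$ whenever $g\ge0$. Hence $G_n[g](0)\le\|g\|_\iy\,G_n[\un](0)$ for $g\ge0$, and iterating gives $G_n^{\circ k}[\un](0)\le(G_n[\un](0))^k=\EE[\tau_n]^k$. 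Two consequences: $\EE[\tau_n^2]\le2\,\EE[\tau_n]^2$, and $\sup_n\EE[(\tau_n/\EE[\tau_n])^3]\le6$, so the family $\big((\tau_n/\EE[\tau_n])^2\big)_n$ is uniformly integrable.

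The second step is to identify the normalization: $a_n\sim\EE[\tau_n]$. From $\tau_n/a_n\to1$ in probability, $\EE[\tau_n]\ge\theta a_n\,\PP[\tau_n>\theta a_n]=\theta a_n(1+o(1))$ for $\theta\in(0,1)$, so $\liminf_n\EE[\tau_n]/a_n\ge1$. Conversely, splitting $\EE[\tau_n]=\EE[\tau_n\un_{\tau_n\le(1+\epsilon)a_n}]+\EE[\tau_n\un_{\tau_n>(1+\epsilon)a_n}]$, bounding the first term by $(1+\epsilon)a_n$ and the second, by Cauchy--Schwarz and $\EE[\tau_n^2]\le2\EE[\tau_n]^2$, by $\sqrt2\,\EE[\tau_n]\sqrt{\PP[\tau_n>(1+\epsilon)a_n]}=o(\EE[\tau_n])$, yields $\limsup_n\EE[\tau_n]/a_n\le1+\epsilon$ for all $\epsilon>0$. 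Therefore $a_n\sim\EE[\tau_n]$, so $\tau_n/\EE[\tau_n]\to1$ in probability, and the uniform integrability from Step~1 upgrades this to $\EE[\tau_n^2]/\EE[\tau_n]^2\to1$ — contradicting Proposition~\ref{calvar}, whose limit $1+\frac{2}{c_k}\int_{-\iy}^{\iy}h_k^2(-a)e^{a^{2k}/C(f,2k)^{2k}}\big(\int_{-\iy}^{a}h_k^2(\tilde a)e^{\tilde a^{2k}/C(f,2k)^{2k}}\,d\tilde a\big)\,da$ is strictly larger than $1$.

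The only genuinely delicate point is conceptual rather than computational: a cut-off only hands us sequences $(a_n),(b_n)$ with $0<b_n\le a_n$, so one cannot normalize by $\EE[\tau_n]$ from the outset, and a priori the mass of $\tau_n$ could escape to large values on a vanishingly small event. The polynomial moment bound $\EE[\tau_n^k]\le k!\,\EE[\tau_n]^k$ is exactly the lever that closes both gaps — first forcing $a_n\sim\EE[\tau_n]$ through Cauchy--Schwarz, and then allowing one to pass from convergence in probability to convergence of the second moment through uniform integrability. All the hard analysis — the Laplace asymptotics near $L/2$ at scale $n^{-1/2k}$ establishing $\Var(\tau_n)\asymp\EE[\tau_n]^2$ — is already packaged in Propositions~\ref{prop-mean} and~\ref{calvar}, so what remains for Theorem~\ref{nocut} is precisely this short soft argument.
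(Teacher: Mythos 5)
Your proof is correct and follows essentially the same strategy as the paper: argue by contradiction, use the Green-operator bound $\EE[\tau_n^k]\le k!\,\EE[\tau_n]^k$ to get uniform integrability, deduce $\EE[\tau_n^2]/\EE[\tau_n]^2\to 1$ from the concentration $\tau_n/a_n\to 1$ forced by the sharp strong stationary time, and contradict Proposition~\ref{calvar}. The only (minor) difference is that you establish $a_n\sim\EE[\tau_n]$ directly via a truncation and Cauchy--Schwarz argument, whereas the paper extracts a subsequence converging in law and invokes Slutsky's theorem; both routes are valid and yours is, if anything, slightly more self-contained.
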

\begin{proof}
The proof is by contradiction. Suppose that the family of diffusion processes $(X_n)_{n\in\NN\setminus\{1\}}$ has a cut-off in separation with mixing times $(a_n)_{n\in\NN\setminus\{1\}}$. Since for any $n\in\NN\setminus\{1\}$, $\tau_n$ is a sharp strong stationary time for $X_n$, the following convergence  in probability holds for large $n$,
 \bq \frac{\tau_n}{a_n} &\overset{\PP}{\to}& 1 . \eq
 Note that by Propositions \ref{Poisson} and \ref{mean-var}, we have
 $$ \parallel G_n(\un) \parallel_{\infty} = u_{n,1}(0)=  \EE[ \tau_n ], $$
 $$  \parallel G_n \parallel_{\mathbb{L}^{\infty}, \mathbb{L}^{\infty}} = \EE[ \tau_n ],$$
 and
 $$\EE[ \tau_n^k ] = k! G_n^{\circ k}[\un] \le k! \EE[ \tau_n ]^k. $$
 Hence $\frac{\tau_n}{\EE[ \tau_n ]}$ is uniformly integrable.
 Up to  extracting a subsequence, we can suppose that $\frac{\tau_n}{\EE[ \tau_n ]}$ converges in law, says toward a random variable $Y$.
 
 Using Slutsky Theorem we have that $(\frac{\tau_n}{\EE[ \tau_n ]} , \frac{\tau_n}{a_n}) $ converge in law to $(Y,1)$, and so  $ \frac{a_n}{\EE[ \tau_n ]}$ converge in law to $Y$, 
 and there exist $ \lambda \in \mathbb{R}^+$ such that
 $$ \frac{a_n}{\EE[ \tau_n ]} \to \lambda = Y.$$
 Hence $\frac{\tau_n}{\EE[ \tau_n ]} \overset{\PP}{\to}  \lambda ,$ and since $\frac{\tau_n}{\EE[ \tau_n ]}$ is uniformly integrable, the convergence takes place in $ \mathbb{L}^1$, and $ \lambda =1$.
 Also $\frac{\tau_n^2}{\EE[ \tau_n ]^2} \overset{\PP}{\to} 1 ,$
and $\frac{\tau_n^2}{\EE[ \tau_n ]^2}$ is uniformly integrable, so the convergence takes place in $ \mathbb{L}^1$, and 
$$\EE\lt[ \frac{\tau_n^2}{\EE[ \tau_n ]^2} \rt] \to 1,$$
and we get a contradiction with Proposition \ref{calvar}.
\end{proof}



In the next propositions, we show that the cut-off phenomenon in separation occurs  when   $f(x)$ looks like $f(L/2) -C \vert L/2 -x \vert ^{1+\alpha} $ for $x$ near $ L/2$ , with $ \alpha \in (0,1) $, but with a different speed in comparison  with the case $\alpha = 1$ in Theorem  \ref{cut-off-ge}. 

\begin{pro}\label{mean-ge-alpha}
Let $f $ be a $C^2$ function on $[0, L] \setminus \{L/2\}$ and $ C^1$ on $[0, L]$  satisfying Assumptions \eqref{H_f} and \eqref{H_f2}. Assume that locally around $L/2$ we have for some $ \alpha \in (0,1)$ and $C >0$,
 \bq
 f''(L/2- h)&=& -C \vert h \vert^{\alpha -1} + o(\vert h \vert^{\alpha -1})  \eq
  then
$$\EE[ \tau_n] \sim  \frac{2}{n} \int_0^{L/2} \frac{f(s)}{f'(s)} ds$$
\end{pro}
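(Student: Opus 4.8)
The plan is to follow the skeleton of the proof of Proposition~\ref{mean-ge}, starting from the exact identity $\EE[\tau_n]=u_{n,1}(0)=2\int_0^{L/2}J_n(s)\,ds$ provided by Proposition~\ref{mean-var} and~\eqref{J_def_ge}, but with a boundary layer of the appropriate width. The decisive structural difference with Theorem~\ref{cut-off-ge} is that, $\alpha$ being $<1$, integrating the hypothesis twice gives $f'(L/2-h)\sim (C/\alpha)\,h^{\alpha}$ and $f(L/2-h)=f(L/2)-\tfrac{C}{\alpha(\alpha+1)}h^{\alpha+1}+o(h^{\alpha+1})$, so that $\frac{f(s)}{f'(s)}\sim\frac{\alpha f(L/2)}{C(L/2-s)^{\alpha}}$ as $s\to L/2^-$ and therefore $\mathcal I:=\int_0^{L/2}\frac{f(s)}{f'(s)}\,ds<\infty$; this is why $\EE[\tau_n]$ is of order $1/n$ with a finite constant, and not of order $\ln n/n$. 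Accordingly I would fix the cutoff $s_n:=L/2-A_n\,n^{-1/(\alpha+1)}$ with $A_n:=e^{\sqrt{\ln n}}$, so that $L/2-s_n\to 0$ while $n(L/2-s_n)^{\alpha+1}=A_n^{\alpha+1}\to\infty$, and prove the two estimates $2\int_0^{s_n}J_n(s)\,ds=\frac2n\,\mathcal I\,(1+o(1))$ and $2\int_{s_n}^{L/2}J_n(s)\,ds=o(1/n)$, which together give the claim.

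For the bulk term I would use the integration-by-parts identity~\eqref{I_cal_ge}, namely $I_n(s)=\frac{f^n(s)}{nf'(s)}+\frac1n\int_0^s\frac{f^n(t)f''(t)}{(f'(t))^2}\,dt$. Since $f'>0$ is non-increasing on $[0,L/2)$ and $f''\le 0$, bounding $|f''|$ on $[0,s]$ by $M_n:=\sup_{[0,s_n]}|f''|$ yields, exactly as in~\eqref{I_born},
\[
\frac{f^{n}(s)}{nf'(s)}\Bigl(1-\frac{M_n\,f(s)}{(n+1)(f'(s))^{2}}\Bigr)\ \le\ I_n(s)\ \le\ \frac{f^{n}(s)}{nf'(s)},\qquad s\in[0,s_n].
\]
By the asymptotics above $M_n=O\bigl((L/2-s_n)^{\alpha-1}\bigr)$ and $(f'(s_n))^{-2}=O\bigl((L/2-s_n)^{-2\alpha}\bigr)$, so (using $f'$ non-increasing and $f$ increasing to reduce to the worst point $s=s_n$) the relative error above is $O\bigl(1/(n(L/2-s_n)^{\alpha+1})\bigr)=O(A_n^{-(\alpha+1)})=o(1)$ uniformly on $[0,s_n]$. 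Together with $\frac{I_n(L-s)}{I_n(L)}=1-\frac{I_n(s)}{I_n(L)}=1+o(1)$ uniformly on $[0,s_n]$ — which follows from $I_n(s)\le I_n(s_n)\le \tfrac L2\,f^{n-1}(s_n)$, the Laplace-type lower bound $I_n(L)\ge I_n(L/2)\ge c\,f^{n-1}(L/2)\,n^{-1/(\alpha+1)}$, and $f^{n-1}(s_n)/f^{n-1}(L/2)=\exp[-c'\,n(L/2-s_n)^{\alpha+1}(1+o(1))]$, which tends to $0$ faster than any power of $n$ — one gets $J_n(s)=\frac{f(s)}{nf'(s)}(1+o(1))$ uniformly on $[0,s_n]$. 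Integrating and using $\int_0^{s_n}\frac{f(s)}{f'(s)}\,ds\to\mathcal I$ (the limit being finite because $\alpha<1$ makes $s\mapsto f(s)/f'(s)$ integrable on $[0,L/2]$) produces $2\int_0^{s_n}J_n(s)\,ds=\frac2n\,\mathcal I\,(1+o(1))$.

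For the tail I would use the one-sided bound $J_n(s)=\frac{I_n(s)}{f^{n-1}(s)}\cdot\frac{I_n(L-s)}{I_n(L)}\le\frac{f(s)}{nf'(s)}$, which holds on all of $[0,L/2)$ because $f''\le 0$ forces $I_n(s)\le\frac{f^n(s)}{nf'(s)}$ and $I_n(L-s)\le I_n(L)$ trivially. Hence $2\int_{s_n}^{L/2}J_n(s)\,ds\le\frac2n\int_{s_n}^{L/2}\frac{f(s)}{f'(s)}\,ds=\frac2n\,O\bigl((L/2-s_n)^{1-\alpha}\bigr)=o(1/n)$, using $\alpha<1$ and $L/2-s_n\to 0$. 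Combining this with the bulk estimate gives $\EE[\tau_n]=\frac2n\,\mathcal I\,(1+o(1))$, which is the assertion.

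The \textbf{main obstacle} is the uniform bulk estimate. In Proposition~\ref{mean-ge} the quantity $m=\min_{[0,L/2]}f''$ is a fixed negative constant, but here $f''$ blows up at $L/2$, so the second-derivative bound must be localized to $M_n=\sup_{[0,s_n]}|f''|\asymp(L/2-s_n)^{\alpha-1}$, and one must check that the resulting relative error $M_n f(L/2)/(n(f'(s_n))^{2})$ is still negligible. This is exactly what forces the width of the boundary layer to satisfy $n(L/2-s_n)^{\alpha+1}\to\infty$, whereas the tail estimate forces $L/2-s_n\to 0$; both constraints can be met because $\alpha+1<2$ leaves room between $n^{-1/(\alpha+1)}$ and $1$. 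A secondary, routine point is the generalized Laplace lower bound for $I_n(L/2)$ with the non-quadratic profile $h^{\alpha+1}$, but only a lower bound is needed, obtained by the substitution $u=(cn)^{1/(\alpha+1)}h$ after dominating $f(L/2-h)/f(L/2)$ from below by $e^{-c\,h^{\alpha+1}}$ near $L/2$.
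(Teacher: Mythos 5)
Your argument is correct, and its skeleton is the one the paper uses for Proposition \ref{mean-ge-alpha}: the exact identity $\EE[\tau_n]=2\int_0^{L/2}J_n(s)\,ds$, the integration-by-parts bound \eqref{I_cal_ge} with a localized second-derivative bound (your $M_n$ plays the role of the paper's $m_n=\inf f''$ over the bulk region), and the observation that $f/f'$ has an integrable singularity of order $(L/2-s)^{-\alpha}$, $\alpha<1$, so that the bulk produces $\tfrac2n\int_0^{L/2}f/f'$. Where you genuinely deviate is in the treatment of the boundary layer. The paper cuts at $L/2-A/n^{1/(1+\alpha)}$ with $A$ fixed, gets bulk bounds of the form $(1-A^{-\alpha/2})(1-A^{-(1+\alpha)/2})\le\liminf\le\limsup\le1$, evaluates the layer term by a Laplace-method computation (\eqref{lemme_alpha}, giving $B_n\sim c\,n^{-2/(1+\alpha)}$), and only then lets $A\to\infty$; you instead widen the layer to $e^{\sqrt{\ln n}}\,n^{-1/(1+\alpha)}$, which makes the bulk relative error $O(A_n^{-(1+\alpha)})=o(1)$ uniformly and dispenses with the final limit in $A$, and you dispose of the layer by the crude domination $J_n\le f/(nf')$ together with $\int_{s_n}^{L/2}f/f'=O((L/2-s_n)^{1-\alpha})\to0$, needing only a one-sided Laplace lower bound $I_n(L/2)\ge c f^{n-1}(L/2)n^{-1/(1+\alpha)}$ to control $I_n(s)/I_n(L)$. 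This is slightly more economical for the proposition at hand (no asymptotic evaluation of the layer term, no $A\to\infty$ bookkeeping); the price is that you do not obtain the precise boundary-layer asymptotics \eqref{lemme_alpha}--\eqref{10_gen_k1_alpha}, which the paper reuses in the variance estimate of Proposition \ref{var-ge-alpha} and in the no-cut-off regime, so your shortcut is local to the expectation and would have to be supplemented later. All the individual steps you invoke check out: the uniform bound $M_n f(L/2)/(n(f'(s_n))^2)=O(1/(n(L/2-s_n)^{1+\alpha}))$, the super-polynomial decay of $f^{n-1}(s_n)/f^{n-1}(L/2)$ since $A_n^{1+\alpha}\gg\ln n$, and the validity of $J_n\le f/(nf')$ on all of $(0,L/2)$ from $f''\le0$.
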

\proof
We follow the same computations  as in the proof of Proposition \ref{mean-ge}, and we adopt the same notations.  For $ A$ big enough and $n$ sufficiently large, let
\begin{align}\label{u_ge_alpha}
\EE[\tau_n] &=2\lt( \underbrace{\int_0^{L/2 - A/n^{1/(1 + \alpha)}}  \frac{I_n(s)}{f^{n-1}(s)} \lt( \frac{I_n(L)-I_n(s)}{I_n(L)}\rt)  ds}_{A_n} \rt. \\
& \quad \quad \quad + \lt. \underbrace{\int_{L/2 - A/n^{1/(1 + \alpha)}}^{L/2 }   \frac{I_n(s)}{f^{n-1}(s)} \lt( \frac{I_n(L)-I_n(s)}{I_n(L)}\rt)  ds}_{B_n} \rt)\label{C10}
\end{align}
Since $f'(L/2)= 0$ we have,
 $$ f'(L/2 - h) = \frac{C}{\alpha} \sign(h) \vert h \vert^{\alpha} + o (\vert h \vert^{\alpha}), $$
 $$ f(L/2 - h) = f(L/2) - \frac{C}{\alpha(\alpha +1)}  \vert h \vert^{ 1 + \alpha} + o (\vert h \vert^{1+\alpha}). $$
 Let $ C_{\alpha} =  \frac{C}{\alpha(\alpha +1)}$,  since $ \ln (f(L/2 - h) ) = \ln (f(L/2)) - \frac{C_{\alpha}}{f(L/2)} \vert h \vert^{ 1 + \alpha} + o (\vert h \vert^{1+\alpha})  ,$
 using Laplace's method we get:
\bqn{C11} I_n(L/2) &= &\int_0^{\frac{L}{2}} f^{n-1}(t)dt  \sim_{n \ri \infty}   \frac{f^{n-1}(L/2)}{n^{1/(1 + \alpha)}} C(\alpha, f), \eqn
 where $C(\alpha, f) = (\frac{f(L/2)}{C_{\alpha}})^{1/(1 + \alpha)} \frac{1}{1 + \alpha}\Gamma( \frac{1}{1+\alpha}) .$
\begin{itemize}
\item For $ A$ big enough and for all  $n$ sufficiently large, and for  $ s \in [0,L/2 - A/n^{1/(1 + \alpha)}]$, let us compute an equivalent of $A_n$ in \eqref{u_ge_alpha}.
 
 Let $ m_n = \inf_{[0,L/2 - A/n^{1/(1 + \alpha)}]} f''$
 by hypothesis on $f$, we have for large $n$,
 $$ m_n \sim -C \frac{A^{\alpha - 1}}{n^{(\alpha - 1)/(1 + \alpha)}}.$$
  Since 
 $$\frac{m_n f(s)}{(n+1)(f'(s))^2} \ge \frac{m_n f(L/2)}{(n+1)(f'(L/2 - A/n^{1/(1 + \alpha)}))^2} \sim_{n\ri \iy} - \frac{c_1}{A^{1+\alpha}} , $$
 we get that for $A$ big enough and $n$ sufficiently large
 $$\frac{m_n f(s)}{(n+1)(f'(s))^2} \ge -\frac{1}{A^{(1+\alpha) /2}}. $$
 Also for large $n$,$$\frac{f^n(s)}{nf'(s)I_n(L)} \le \frac{f^n( L/2 - A/n^{1/(1 + \alpha)})}{n f'(L/2 - A/n^{1/(1 + \alpha)}) I_n(L)}  \sim c_1 \frac{e^{-\frac{C_{\alpha}}{f(L/2)}A^{1+\alpha}} }{A^{\alpha}},$$
 so  for $A$ big enough and $n$ sufficiently large
 $$\frac{f^n(s)}{nf'(s)I_n(L)} \le  \frac{1}
 {A^{\alpha/2}}.$$
 Using \eqref{I_born} and \eqref{vol_born}, it follows that for $A$ big enough and $n$ sufficiently large,  uniformly in   $ s \in [0,L/2 - A/n^{1/(1 + \alpha)}]$,
 \begin{equation}\label{I_born_alpha}
 \frac{f^n(s)}{nf'(s)} \lt( 1  - \frac{1}{A^{(1+\alpha) /2}} \rt)   \le I_n(s)\le \frac{f^n(s)}{nf'(s)},
\end{equation}
 and so 
 \begin{equation}\label{J_unif_alpha}
 \frac{f(s)}{nf'(s)}  \lt(1 -\frac{1 }{A^{\alpha/2}} \rt)  \lt( 1  - \frac{1}{A^{(1+\alpha) /2}}  \rt)  \le J_n(s)\le \frac{f(s)}{nf'(s)}.
\end{equation}
where $J_n$ is defined in \eqref{J_def_ge}.\par
Since $\frac{f(s)}{f'(s)} \sim_{s \ri (L/2)_{-}} \frac{\alpha}{C} \frac{f(L/2)}{\vert L/2-s \vert^\alpha} $, is integrable at $ L/2$ we get that for all $A$ big enough:
   \begin{equation} \label{sup_inf_alpha}
 \lt(1 -\frac{1 }{A^{\alpha/2}} \rt)  \lt( 1  - \frac{1}{A^{(1+\alpha) /2}}  \rt) \le \liminf_{n\ri\iy}  \frac{A_n}{\frac{1}{n} \int_0^{L/2} \frac{f(s)}{f'(s)} ds } \le \limsup_{n\ri\iy}  \frac{A_n}{\frac{1}{n} \int_0^{L/2} \frac{f(s)}{f'(s)} ds }\le 1  
   \end{equation} 
   
  \item  For $ A$ big enough and for all  $n$ sufficiently large, and for  $ s \in [L/2 - A/n^{1/(1 + \alpha)}, L/2]$, let us compute the equivalent of $B_n$ in \eqref{C10}.
  
 More generally when $s \in [\frac{L}{2}-\frac{A}{n^{1/(1 + \alpha)}}, \frac{L}{2}+\frac{A}{n^{1/(1 + \alpha)}}]$,  write $s = L/2 + a/n^{1/(1 + \alpha)} $, with $a \in [-A,A] $. We have uniformly in $a \in [-A,A] $:
\begin{align}
I_n(L/2 & + a/n^{1/(1 + \alpha)} ) = I_n(L/2) + \int_{L/2}^{L/2 + a/n^{1/(1 + \alpha)}} f^{n-1}(x)dx \notag \\
&= I_n(L/2) + \frac{1}{n^{1/(1 + \alpha)}} \int_{0}^{a} f^{n-1}(L/2 + \frac{h}{n^{1/(1 + \alpha)}})dh \notag \\
&= I_n(L/2) + \frac{1}{n^{1/(1 + \alpha)}}\int_{0}^{a} \lt(f\lt(\f{L}2\rt) - \frac{C_{\alpha}\vert h \vert ^{1+\alpha}}{n} +o\lt(\f{\vert h \vert ^{1+\alpha}}{n}\rt) \rt)^{n-1}dh \notag \\
 &= I_n(L/2) + \frac{f(L/2)^{n-1}}{n^{1/(1 + \alpha)}}
 \int_{0}^{a} e^{(n-1) \ln \lt(1 - \frac{C_{\alpha}\vert h \vert ^{1+\alpha}}{f(L/2)n} 
+o\lt(\f{\vert h \vert ^{1+\alpha}}{n}
\rt)\rt)}dh 
\notag \\
 &\sim \frac{ f^{n-1}(L/2)}{n^{1/(1 + \alpha)}} \lt( C(\alpha, f)  + \int_0^a e^{- \frac{C_{\alpha}\vert h \vert^{1+\alpha}}{f(L/2)}  } dh   \rt) \notag \\
 &= \frac{ f^{n-1}(L/2)}{n^{1/(1 + \alpha)}}   \int_{-\infty }^a e^{- \frac{C_{\alpha}\vert h \vert^{1+\alpha}}{f(L/2)} } dh  .\label{lemme_alpha}
\end{align} 
Concerning the justification of the equivalent in the above computation, 
note the integral term  
$ \int_{0}^{a} e^{(n-1) \ln (1 - \frac{C_{\alpha}\vert h \vert ^{1+\alpha}}{f(L/2)n} +o(\vert h \vert ^{1+\alpha}/n) ) }dh $ converges for fixed $a$ to $\int_0^a e^{- \frac{C_{\alpha}h^{1+\alpha}}{f(L/2)}  } dh $, by the dominated convergence theorem (since the integrand is bounded by $1$), finally by Dini's theorem this convergence is  uniform in $ a \in [-A,A]$. The last equality follows by a change of variable formula that shows that  $C(\alpha, f)  = \int_{-\infty}^0 e^{- \frac{C_{\alpha}\vert h \vert^{1+\alpha}}{f(L/2)}  } dh $.

Define $ h_{\alpha}(a) =\int_{-\infty }^a e^{- \frac{C_{\alpha}\vert h \vert^{1+\alpha}}{f(L/2)} } dh  $,  we get that for  $A$ big enough, and for $B_n$ defined in \eqref{C10}
 \begin{align}
 B_n &\df \int_{L/2 - A/n^{1/(1 + \alpha)}}^{L/2 }   \frac{I_n(s)}{f^{n-1}(s)} \lt( \frac{I_n(L)-I_n(s)}{I_n(L)}\rt)  ds \notag \\
 &\sim \frac{1}{n^{3/(1 + \alpha)}I_n(L)} \int_{-A}^{0} \frac{h_{\alpha}(a)h_{\alpha}(-a) f^{2n-2}(L/2)}  {f^{n-1}(L/2)} e^{\frac{C_{\alpha}\vert a \vert^{1+\alpha}}{f(L/2)} }   da \notag \\
 &\sim \frac{c}{n^{2/(1 + \alpha)}} = o\lt(\frac{1}{n}\rt) ,
 \end{align}
 where we took \eqref{C11} into account.
Hence using \eqref{sup_inf_alpha} we have for all  $A$ large enough 
$$   \lt(1 -\frac{1 }{A^{\alpha/2}} \rt)  \lt( 1  - \frac{1}{A^{(1+\alpha) /2}}  \rt)\le  \liminf_{n\ri\iy} \frac{\EE[\tau_n]}{ \frac{2}{n} \int_0^{L/2} \frac{f(s)}{f'(s)} ds }   \le  \limsup_{n\ri\iy}\frac{\EE[\tau_n]}{ \frac{2}{n} \int_0^{L/2} \frac{f(s)}{f'(s)} ds } \le 1  , $$ 
and so leting $ A$ tends to infinity we get
$$\EE[ \tau_n] \sim  \frac{2}{n} \int_0^{L/2} \frac{f(s)}{f'(s)} ds.$$ 
   
\end{itemize} 
\wwtbp
\begin{pro}\label{var-ge-alpha}
Let $f $ be a $C^2$ function on $[0, L] \setminus \{L/2\}$ and $ C^1$ on $[0, L]$  satisfying Assumptions \eqref{H_f} and \eqref{H_f2}. Assume that  for some $ \alpha \in (0,1)$ and $C >0$, we have for all $\vert h\vert >0$ small enough,
 $$f''(L/2- h)= -C \vert h \vert^{\alpha -1} + o(\vert h \vert^{\alpha -1})  $$ then
\bq \Var(\tau_n) & = & o\lt(\frac{1}{n^2}\rt). \eq
\end{pro}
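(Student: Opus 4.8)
The strategy is to mimic the proof of Proposition~\ref{var-ge} almost verbatim, only replacing the Gaussian width $1/\sqrt n$ there by the width $1/n^{1/(1+\alpha)}$ produced by the Laplace estimate \eqref{C11}, and to feed in the pointwise bounds on $I_n$ and $J_n$ already obtained in the proof of Proposition~\ref{mean-ge-alpha}. Starting from Proposition~\ref{var} and integrating by parts as in the first lines of the proof of Proposition~\ref{var-ge}, one gets $\frac12\Var(\tau_n)=\int_0^{L}J_n(s)(u_{n,1}'(s))^2\,ds$, with $J_n$ as in \eqref{J_def_ge} and $(u_{n,1}')^2$ as in \eqref{dv}. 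I would then split this integral over the three ranges $[0,L/2-A/n^{1/(1+\alpha)}]$, $[L/2-A/n^{1/(1+\alpha)},L/2+A/n^{1/(1+\alpha)}]$ and $[L/2+A/n^{1/(1+\alpha)},L]$, call the pieces $A_n$, $B_n$, $C_n$, and show that each is $o(1/n^2)$ once $A$ is fixed large and $n\to\infty$.

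\textbf{The inner piece.} On $[0,L/2-A/n^{1/(1+\alpha)}]$ the two-sided estimate \eqref{I_born_alpha} gives $\frac{I_n^2(s)}{f^{n-1}(s)}\le (1+o(1))\frac{f^{n+1}(s)}{n^2(f'(s))^2}$, and since $f''\le 0$ on all of $[0,L]\setminus\{L/2\}$, an integration by parts on $W_n(t)=\int_0^t f^{n+1}(s)/(f'(s))^2\,ds$ (exactly as in the proof of Proposition~\ref{var-ge}) yields $\int_0^t\frac{I_n^2(s)}{f^{n-1}(s)}\,ds\le\frac{f^{n+2}(t)}{n^3(f'(t))^3}$; together with the lower bound in \eqref{I_born_alpha} this gives $(u_{n,1}'(t))^2\le\frac{c}{n^2}\bigl(\frac{f(t)}{f'(t)}\bigr)^2$ for $A$ large. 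Using $J_n(s)\le\frac{f(s)}{nf'(s)}$ from \eqref{I_born} and $\frac{f(s)}{f'(s)}\sim_{s\to (L/2)^-}\frac{\alpha f(L/2)}{C}\,|L/2-s|^{-\alpha}$, I get $A_n\le\frac{c}{n^3}\int_0^{L/2-A/n^{1/(1+\alpha)}}\bigl(\frac{f(s)}{f'(s)}\bigr)^3\,ds=O\bigl(n^{-3+\max(3\alpha-1,0)/(1+\alpha)}\bigr)$ (with a $\ln n$ factor in the borderline case $3\alpha=1$), which is $o(1/n^2)$ precisely because $\frac{3\alpha-1}{1+\alpha}<1\Leftrightarrow\alpha<1$.

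\textbf{The window and the right piece.} For $B_n$ I would substitute $s=L/2+a/n^{1/(1+\alpha)}$, $a\in[-A,A]$, and use \eqref{lemme_alpha} and \eqref{C11} together with the uniform asymptotics $f^n(L/2\pm a/n^{1/(1+\alpha)})\sim f^n(L/2)e^{-C_\alpha|a|^{1+\alpha}/f(L/2)}$. Splitting the inner integral in $u_{n,1}'$ at $L/2-A/n^{1/(1+\alpha)}$ and bounding the tail by $\frac{f^{n+2}}{n^3(f')^3}$ as above leads to $|u_{n,1}'(L/2+a/n^{1/(1+\alpha)})|\le\frac{c(A)}{n^{1/(1+\alpha)}}\,\frac{e^{-C_\alpha|a|^{1+\alpha}/f(L/2)}}{h_\alpha^2(a)}\bigl(\theta_A(a)+cA^{-3\alpha}\bigr)$ with $\theta_A(a):=\int_{-A}^a h_\alpha^2(\tilde a)e^{C_\alpha|\tilde a|^{1+\alpha}/f(L/2)}\,d\tilde a$, while $J_n(L/2+a/n^{1/(1+\alpha)})\sim\frac{c\,h_\alpha(a)h_\alpha(-a)e^{C_\alpha|a|^{1+\alpha}/f(L/2)}}{n^{1/(1+\alpha)}}$; hence $B_n=\frac1{n^{1/(1+\alpha)}}\int_{-A}^A J_n(u_{n,1}')^2\,da\le\frac{c(A)}{n^{4/(1+\alpha)}}=o(1/n^2)$ since $4/(1+\alpha)>2\Leftrightarrow\alpha<1$. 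For $C_n$, use $J_n(s)=J_n(L-s)$ to write $C_n=\int_0^{L/2-A/n^{1/(1+\alpha)}}J_n(s)(u_{n,1}'(L-s))^2\,ds$, and estimate $u_{n,1}'(L-s)$ for such $s$ by decomposing $\int_0^{L-s}\frac{I_n^2(t)}{f^{n-1}(t)}\,dt$ over $[0,L/2-A/n^{1/(1+\alpha)}]$, the window, and $[L/2+A/n^{1/(1+\alpha)},L-s]$; on the last range one uses $f(L-t)=f(t)$, an integration by parts, and $f'(t)\ge f'(L/2-A/n^{1/(1+\alpha)})\sim cA^\alpha n^{-\alpha/(1+\alpha)}$. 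Combining these with $I_n(L-s)\ge I_n(L/2)$ and \eqref{C11} gives $|u_{n,1}'(L-s)|\le c(A)/n^{1/(1+\alpha)}$ uniformly, so $C_n\le\frac{c(A)^2}{n^{2/(1+\alpha)}}\int_0^{L/2-A/n^{1/(1+\alpha)}}J_n(s)\,ds\le\frac{c(A)^2}{n^{2/(1+\alpha)}}\cdot\frac{\EE[\tau_n]}{2}=O\bigl(n^{-1-2/(1+\alpha)}\bigr)=o(1/n^2)$ by Proposition~\ref{mean-ge-alpha} and $2/(1+\alpha)>1\Leftrightarrow\alpha<1$. Summing, $\Var(\tau_n)=2(A_n+B_n+C_n)=o(1/n^2)$.

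\textbf{Where the difficulty lies.} Most of the work is exponent bookkeeping; the one genuine change from Proposition~\ref{var-ge} is that here $f''$ blows up at $L/2$. Since however $f''\le 0$ throughout, all the integration-by-parts steps only ever \emph{drop} non-positive remainder terms, so the unboundedness of $f''$ is harmless, and the single point that needs care is to verify that each of the three decisive inequalities collapses to exactly the condition $\alpha<1$ — which is precisely where the hypothesis $\alpha\in(0,1)$ is used.
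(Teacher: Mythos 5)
Your proposal is correct and follows essentially the same route as the paper's own proof: the identity $\tfrac12\Var(\tau_n)=\int_0^L J_n\,(u_{n,1}')^2$, the three-piece split at $L/2\pm A/n^{1/(1+\alpha)}$, the bounds \eqref{I_born_alpha}, \eqref{Int_I/f^}, \eqref{lemme_alpha} and \eqref{C11}, and the same exponent bookkeeping yielding $A_n=O(n^{-\min(3,4/(1+\alpha))})$ (with the $\ln n$ borderline at $3\alpha=1$), $B_n=O(n^{-4/(1+\alpha)})$ and $C_n=O(n^{-1-2/(1+\alpha)})$, each $o(1/n^2)$ exactly because $\alpha<1$. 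The only cosmetic difference is that for $C_n$ you invoke Proposition~\ref{mean-ge-alpha} to bound $\int J_n$ by $\EE[\tau_n]/2$, whereas the paper bounds $J_n\le f(s)/(nf'(s))$ directly; the two are equivalent.
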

\proof
From Proposition \ref{var} and after integration by parts, we have that  for all $A$ large enough and for all $ n $ large enough:
\begin{align}
 \frac{\Var(\tau_n)}{2}  &= \underbrace{\int_0^{L/2- A/n^{1/(1 + \alpha)}} J_n(s) (u_{n,1}'(s))^2 ds}_{A_n}
 + \underbrace{\int_{L/2- A/n^{1/(1 + \alpha)}}^{L/2+ A/n^{1/(1 + \alpha)}}J_n(s) (u_{n,1}'(s))^2 ds}_{B_n} \label{**_ge_k1_alpha0} \\
 & + \underbrace{\int_{L/2+ A/n^{1/(1 + \alpha)}}^{L} J_n(s) (u_{n,1}'(s))^2 ds}_{C_n}, \label{**_ge_k1_alpha} \\
 \notag
\end{align}
where $J_n$ is defined in \eqref{J_def_ge} and $(u_{n,1}')^2$ in \eqref{dv}.
\begin{itemize}
\item  Let us start by estimating the term $ A_n$, using \eqref{I_born_alpha}, it follows that for  $ A$ big enough and for all $n$ sufficiently large, and for all $s \in [0, L/2 -A/n^{1/(1 + \alpha)} ], $ 
\begin{equation}\label{21_ge_alpha}
\frac{f^{n+1}(s)}{n^2 (f'(s))^2} \lt(1 -\frac{1 }{A^{(1+\alpha)/2}} \rt) ^2 \le \frac{I^2_n(s)}{f^{n-1}(s)} \le \frac{f^{n+1}(s)}{n^2 (f'(s))^2}.
 \end{equation}
Since $f'' \le 0$ in $[0, L/2[$,  \eqref{Int_I/f^} holds for all $ 0 \le t < L/2$,
hence  for  $ A$ big enough and for all $n$ sufficiently large, and for all $ t\in[0, L/2 -A/n^{1/(1 + \alpha)} ]$:
$$(u_{n,1}'(t))^2 \le \lt(\frac{f(t)}{nf'(t) (1-\frac{1}{A^{(1+\alpha)/2}})^2 }  \rt)^2 .$$
Also by \eqref{21_ge},  
$$ J_n(s) \df \frac{I_n(s)}{f^{n-1}(s)} \frac{I_n(L-s)}{I_n(L)} \le \frac{f(s)}{n f'(s)}. $$
 Hence for $A$ big enough, for all $n$ sufficiently large, and for $A_n$ defined in \eqref{**_ge_k1_alpha0}, we have
\begin{align*}
A_n &\le \frac{1}{n^3 (1 - \frac{1}{A^{(1+\alpha)/2}})^4} \int_0^{L/2- A/n^{1/(1 + \alpha)}}\frac{f^{3}(s)}{(f'(s))^3} ds\\
& \le \frac{f(L/2)^3}{n^3 (1 - \frac{1}{A^{(1+\alpha)/2}})^4} \int_0^{L/2- A/n^{1/(1 + \alpha)}}\frac{1}{(f'(s))^3} ds
\end{align*}
Taking into account that $ \frac{1}{f'(s)} \sim_{s \ri L/2-} \frac{c}{\vert(L/2-s) \vert^{\alpha}} $, we get
\bq
\int_0^{L/2- A/n^{1/(1 + \alpha)}}\frac{1}{(f'(s))^3} ds&\sim&
\lt\{\begin{array}{ll}
c&\hbox{, if $3\alpha<1$}\\
c\ln(n^{1/(1 + \alpha)}/A)&\hbox{, if $3\alpha=1$}\\
c\frac{n^{(3\alpha - 1)/(1+\alpha)}}{A^{(3\alpha -1)}}&\hbox{, if $3\alpha>1$.}
\end{array}\rt.\eq
Hence  for $A $ big enough, we get
\bq
A_n&\sim&
\lt\{\begin{array}{ll}
\di {c}/{n^3}&\hbox{, if $3\alpha<1$}\\[2mm]
\di{c\ln(n)}/{n^3}&\hbox{, if $3\alpha=1$}\\[2mm]
\di{c}/{n^{\f{4}{1+\alpha}}}&\hbox{, if $3\alpha>1$,}
\end{array}\rt.\eq
in particular,
since $ \alpha \in (0,1)$,  
\bqn{compl2_alpha} A_n &=& o\lt(\frac{1}{n^2}\rt).\eqn

\item For the term $B_n $ in \eqref{**_ge_k1_alpha0}: for $A$ big enough, for all $n$  large enough and  for $ a \in [-A,A] $, let $x = L/2 +a/ n^{1/(1 + \alpha)} $ we have
\begin{align*}
 & \lt\vert u_{n,1}'\lt(\frac{L}{2}  + \frac{a}{n^{1/(1 + \alpha)} }\rt)\rt\vert \\
& =  \frac{f^{n-1}\lt(\frac{L}{2} +\frac{a}{n^{1/(1 + \alpha)} }\rt)}{I^2_n\lt(\frac{L}{2} +\frac{a}{n^{1/(1 + \alpha)} }\rt)} \lt(\int_0^{L/2-A/n^{1/(1 + \alpha)}} \frac{I^2_n(s)}{f^{n-1}(s)} ds  +  \int_{L/2-A/n^{1/(1 + \alpha)}}^{L/2+\frac{a}{n^{1/(1 + \alpha)} }} \frac{I^2_n(s)}{f^{n-1}(s)} ds  \rt) .
\end{align*}

By the above computation and \eqref{Int_I/f^}, we have 

 $$ \int_0^{L/2-A/n^{1/(1 + \alpha)}} \frac{I^2_n(s)}{f^{n-1}(s)} ds \le \frac{f^{n+2}(L/2-\frac{A}{n^{1/(1 + \alpha)} })}{n^3 (f'(L/2-\frac{A}{n^{1/(1 + \alpha)} }))^3} \sim c\frac{f^{n+2}(L/2)e^{-\frac{C_{\alpha}}{f(L/2)}A^{(1 + \alpha)}}}{n^{3/(1+ \alpha)}A^{3\alpha}}.$$

Recall that from \eqref{lemme_alpha}, and for $h_{\alpha} (a) = \int_{-\infty }^a e^{- \frac{C_{\alpha}\vert h \vert^{1+\alpha}}{f(L/2)} } dh$, we have uniformly over  $ a \in [-A,A] $
   \begin{equation} I_n(L/2 + a/n^{1/(1 + \alpha)}) \sim f^{n-1}(L/2)\f{h_{\alpha}(a)}{n^{1/(1 + \alpha)}}, \label{10_gen_k1_alpha} 
   \end{equation}
   hence uniformly in $ a \in [-A,A] $, 
   
\begin{align*}
 \int_{L/2-A/n^{1/(1 + \alpha)}}^{L/2+\frac{a}{n^{1/(1 + \alpha)} }} \frac{I^2_n(s)}{f^{n-1}(s)} ds  &=  \frac{1}{n^{1/(1 + \alpha)}}\int_{-A}^{a} \frac{I^2_n(L/2+\frac{\tilde{a}}{n^{1/(1 + \alpha)} })}{f^{n-1}(L/2+\frac{\tilde{a}}{n^{1/(1 + \alpha)} })} d \tilde{a}\\
 & \sim  \frac{f^{n-1}(L/2)}{n^{3/(1 + \alpha)}} \int_{-A}^{a} h_{\alpha}^2(\tilde{a})e^{\frac{C_{\alpha}}{f(L/2)} \vert \tilde{a}\vert^{(1 + \alpha)}} d \tilde{a} \\
 &=\frac{f^{n-1}(L/2)\theta_{\alpha,A}(a)}{n^{3/(1+ \alpha)}},\\
\end{align*}
where $\theta_{\alpha,A}(a) \df \int_{-A}^{a} h_{\alpha}^2(\tilde{a})e^{\frac{C_{\alpha}}{f(L/2)} \vert \tilde{a} \vert^{(1 + \alpha)}} d \tilde{a}  $.

 Since uniformly in $ a \in [-A,A] $,
 
 $$ f^n(L/2 - a/n^{1/(1 + \alpha)} )\sim f^n(L/2)e^{-\frac{C_{\alpha}}{f(L/2)}\vert a \vert^{(1 + \alpha)}} ,$$

 we have  for $A$ big enough  
 $$\lt\vert u_{n,1}'\lt(\frac{L}{2} +\frac{a}{n^{1/(1 + \alpha)} }\rt) \rt\vert \le c \frac{e^{-\frac{C_{\alpha}}{f(L/2)}\vert a \vert ^{(1 + \alpha)}}} {n^{1/(1 + \alpha)}h_{\alpha}^2(a)} \lt(\theta_{\alpha,A}(a) + \frac{1}{A^{3\alpha}}\rt).$$
Hence 
\begin{align*}
 B_n &\df \int_{L/2- A/n^{1/(1 + \alpha)}}^{L/2+ A/n^{1/(1 + \alpha)}}J_n(s) (u_{n,1}'(s))^2 ds\\
 &= \frac{1}{n^{1/(1 + \alpha)}}\int_{-A}^{A}J_n(L/2+ a/n^{1/(1 + \alpha)}) (u_{n,1}'(L/2+ a/n^{1/(1 + \alpha)}))^2 da\\
 &\le \frac{1}{n^{1/(1 + \alpha)}}\int_{-A}^{A}J_n(L/2+ a/n^{1/(1 + \alpha)}) \lt(c \frac{e^{-\frac{C_{\alpha}}{f(L/2)}\vert a \vert^{(1 + \alpha)}}} {n^{1/(1 + \alpha)}h_\alpha^2(a)} \lt(\theta_{\alpha,A}(a) + \frac{1}{A^{3\alpha}}\rt)\rt)^2 da\\
 &\sim  \frac{c(A)}{n^{4/(1+\alpha)} },\\
\end{align*}
where we use in the third line that
\begin{align*} J_n(L/2 + a/n^{1/(1 + \alpha)}) & =\frac{I_n(L/2 +a/n^{1/(1 + \alpha)})}{f^{n-1}(L/2 +a/n^{1/(1 + \alpha)})}  \frac{I_n(L/2 -a/n^{1/(1 + \alpha)})}{I_n(L)} \\
&\sim c \frac{h_{\alpha}(a)h_{\alpha}(-a)e^{\frac{C_{\alpha}}{f(L/2)}\vert a \vert^{(1 + \alpha)}}}{n^{1/(1 + \alpha)} } ,
\end{align*}
 and $c(A) $ is a constant that depends on $A$.
It follows that for all $A$ large enough,  
\bqn{compl5_alpha} B_n &=&   o\lt(\frac{1}{n^2}\rt).\eqn

\item For the last term $C_n $ in \eqref{**_ge_k1_alpha}, note that $J_n(s)=J_n(L -s)$ so
\begin{align*}
C_n &= \int_{L/2+ A/n^{1/(1 + \alpha)}}^{L} J_n(s) (u_{n,1}'(s))^2 ds\\
&=\int_0^{L/2- A/n^{1/(1 + \alpha)}} J_n(s) (u_{n,1}'(L-s))^2 ds.\\
\end{align*}
Also for $ s \le L/2- A/n^{1/(1 + \alpha)} $
\begin{align*}
\vert u_{n,1}'(L - s) \vert &=  \frac{f^{n-1}(s)}{I^2_n(L-s)} \int_0^{L-s} \frac{I^2_n(t)}{f^{n-1}(t)} dt \\
&= \frac{f^{n-1}(s)}{I^2_n(L-s)} \lt( \int_0^{L/2-A/n^{1/(1 + \alpha)}} \frac{I^2_n(t)}{f^{n-1}(t)} dt  \rt. \\
& \quad \quad + \lt. \int_{L/2-A/n^{1/(1 + \alpha)}}^{L/2+A/n^{1/(1 + \alpha)}} \frac{I^2_n(t)}{f^{n-1}(t)} dt +
\int_{L/2+A/n^{1/(1 + \alpha)}}^{L-s} \frac{I^2_n(t)}{f^{n-1}(t)} dt \rt).
\end{align*}
The first two terms in the above bracket has been computed in the above item,  for the last term since for $s\leq L/2-A/n^{1/(1 + \alpha)}$,

\begin{align*}
\int_{L/2+A/n^{1/(1 + \alpha)}}^{L-s} \frac{I^2_n(t)}{f^{n-1}(t)} dt
&\le I^2_n(L-s)\int_{L/2+A/n^{1/(1 + \alpha)}}^{L-s} \frac{1}{f^{n-1}(t)} dt\\
&=  I^2_n(L-s)\int_s^{L/2-A/n^{1/(1 + \alpha)}}\frac{1}{f^{n-1}(t)} dt\\
&=  I^2_n(L-s)\int_s^{L/2-A/n^{1/(1 + \alpha)}}\frac{f'(t)}{f^{n-1}(t)f'(t)} dt\\
&\le \frac{I^2_n(L-s)}{f'(L/2-A/n^{1/(1 + \alpha)})} \lt(\frac{f^{-n +2}(s)}{n-2} \rt) \\
&\sim c\frac{f^{-n+2}(s)I^2_n(L-s)}{n^{1/(1+\alpha)}A^{\alpha}}\\
\end{align*}
Since $L -s \ge L/2 $, we have for $A$ big enough and for all $n$ sufficiently large,
\begin{align*}
\vert u_{n,1}'(L - s)\vert &\le  \frac{f^{2n-2}(L/2)}{n^{3/(1+\alpha)}I_n^2(L/2)}\lt(  \frac{1}{A^{3\alpha}} + \theta_{A}(A)   \rt) +  c \frac{f(s)}{n^{1/(1+\alpha)}A^{\alpha}}\\
 &\le   \frac{c}{n^{1/(1+\alpha)}} \lt(  \frac{1}{A^{3\alpha}} + \theta_{A}(A)   \rt)
 +c \frac{f(s)}{n^{1/(1+\alpha)}A^{\alpha}}
 \le   \frac{c(A)}{n^{1/(1+\alpha)}} 
\end{align*}
where in the second inequality, we used \eqref{C11}. Since, for $ s \le L/2 - A/n^{1/(1 + \alpha)} $, \eqref{I_born_alpha} yield  $ J_n(s) \le \frac{f(s)}{n f'(s)} $, so for $A$ big enough and for all $n$ sufficiently large,
\begin{align*}
C_n 
&=\int_0^{L/2- A/n^{1/(1 + \alpha)}} J_n(s) (u_{n,1}'(L-s))^2 ds.\\
&\le \frac{c^2(A)}{n^{2/(1 + \alpha)}} \int_0^{{L/2- A/n^{1/(1 + \alpha)}} } J_n(s)ds\\
&\le  \frac{c^2(A) f(L/2)}{n^{1+2/(1 + \alpha)}} \int_0^{{L/2- A/n^{1/(1 + \alpha)}} } \frac{1}{f'(s)} ds\\
&\sim  c \frac{c^2(A) f(L/2)}{n^{1+2/(1 + \alpha)}}  = o( \frac{1}{n^2}),
\end{align*}
where for the equivalent we use that $f'(s) \sim_{(L/2)^{-}} c \vert L/2-s\vert^{\alpha}  $ and the integral is convergent. The last equality follows since $ \alpha \in (0,1)$.
Hence \bqn{compl6_alpha}C _n &= &  o\lt(\frac{1}{n^2}\rt).\eqn
\end{itemize}
Putting \eqref{compl2_alpha}, \eqref{compl5_alpha} and \eqref{compl6_alpha} together, we deduce that $ \Var(\tau_n)= o\lt(\frac{1}{n^2}\rt) .$
\wwtbp

\begin{theo}\label{cut-off-ge_alpha}
Let $f $ be a $C^2$ function on $[0, L] \setminus \{L/2\}$ and $ C^1$ on $[0, L]$  satisfying Assumptions \eqref{H_f} and \eqref{H_f2}.
Assume that  for some $ \alpha \in (0,1)$ and $C >0$, we have for all $\vert h\vert >0$ small enough,
\bq
f''(L/2- h)&=& -C \vert h \vert^{\alpha -1} + o(\vert h \vert^{\alpha -1}).  \eq
Let  $X_n\df(X_n(t))_{t\geq 0}$ be the Brownian motion described in Definition \ref{BM}.
Then the family of diffusion processes $(X_n)_{n\in\NN\setminus\{1\}}$ has a cut-off in separation with mixing times $(a_n)_{n\in\NN\setminus\{1\}} =  \lt(\frac{2}{n} \int_0^{L/2} \frac{f(s)}{f'(s)} ds\rt)_{n\in\NN\setminus\{1\}}$, in the sense of Section \ref{cutoff}. 
\end{theo}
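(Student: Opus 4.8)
The plan is to obtain Theorem~\ref{cut-off-ge_alpha} as an immediate corollary of the abstract cut-off criterion Theorem~\ref{th1}, feeding it the two moment estimates that have just been proved for this regime: Proposition~\ref{mean-ge-alpha} for the first moment of the covering time $\tau_n$, and Proposition~\ref{var-ge-alpha} for its variance. Since the hypotheses on $f$ in the present statement are exactly those required by these two propositions, the only thing to do is to fix the mixing sequence $a_n := \frac{2}{n}\int_0^{L/2}\frac{f(s)}{f'(s)}\,ds$ and to exhibit an admissible window sequence $(b_n)$. I would simply take $b_n := a_n$, for which the constraint $0 < b_n \le a_n$ of Theorem~\ref{th1} holds trivially (note that $a_n>0$ because the integrand $f/f'$ is positive on $(0,L/2)$, and $a_n<\infty$ because $f(s)/f'(s)\sim_{s\to (L/2)^-}\frac{\alpha}{C}\frac{f(L/2)}{|L/2-s|^{\alpha}}$ is integrable at $L/2$ precisely when $\alpha\in(0,1)$).

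With this choice, the first hypothesis of Theorem~\ref{th1}, namely $a_n-\EE[\tau_n]=o(b_n)$, follows directly from Proposition~\ref{mean-ge-alpha}, which gives $\EE[\tau_n]\sim_{n\to\infty} a_n$; hence $a_n-\EE[\tau_n]=o(a_n)=o(b_n)$. For the second hypothesis, $\Var(\tau_n)=o(b_n^2)$, I would invoke Proposition~\ref{var-ge-alpha}, which yields $\Var(\tau_n)=o(1/n^2)$; since $b_n^2=a_n^2\sim \big(2\int_0^{L/2} f/f'\big)^2 n^{-2}$ is a fixed positive constant times $n^{-2}$, we conclude $\Var(\tau_n)/b_n^2\to 0$. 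Both conditions being met, Theorem~\ref{th1} produces a cut-off in separation at time $a_n$ with window $b_n=a_n$, which by the convention recorded in Section~\ref{cutoff} is exactly the asserted cut-off in separation with mixing times $(a_n)_{n\in\NN\setminus\{1\}}$, and this completes the argument.

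I do not expect any genuine obstacle inside this theorem itself; the entire difficulty has been absorbed into Propositions~\ref{mean-ge-alpha} and~\ref{var-ge-alpha}, whose hard point is the Laplace-type asymptotics near the maximum $s=L/2$ of $f$: there $f'$ vanishes like $|L/2-s|^{\alpha}$ instead of linearly, so the correct rescaling is $s=L/2+a\,n^{-1/(1+\alpha)}$ and the Gaussian profile of the $\alpha=1$ case is replaced by $\exp\!\big(-C_\alpha|h|^{1+\alpha}/f(L/2)\big)$. The one feature worth underlining, as the contrast with Theorem~\ref{cut-off-ge}, is that here the boundary-layer contribution of the window around $L/2$ to $\EE[\tau_n]$ is of strictly lower order ($o(1/n)$ against the bulk term $\sim c/n$ coming from $\int_0^{L/2-A n^{-1/(1+\alpha)}} J_n$), which is exactly why no logarithmic correction appears, why $\EE[\tau_n]$ is asymptotic to the clean constant $2\int_0^{L/2} f/f'$ divided by $n$, and why the maximal window $b_n=a_n$ is affordable.
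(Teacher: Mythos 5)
Your proposal is correct and follows exactly the paper's own proof: invoke Theorem~\ref{th1} with the moment estimates from Proposition~\ref{mean-ge-alpha} (giving $\EE[\tau_n]\sim a_n$, hence $a_n-\EE[\tau_n]=o(a_n)$) and Proposition~\ref{var-ge-alpha} (giving $\Var(\tau_n)=o(1/n^2)=o(a_n^2)$), taking $b_n=a_n$ so that the convention of Section~\ref{cutoff} applies. Your observation that $a_n$ is finite precisely because $\alpha\in(0,1)$ makes $f/f'$ integrable at $L/2$, and your remark contrasting the bulk-dominated $\EE[\tau_n]$ here with the boundary-layer-dominated case of Theorem~\ref{cut-off-ge}, are both accurate and help explain why the full window $b_n=a_n$ works.
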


\proof
Use Theorem \ref{th1},  Proposition \ref{mean-ge-alpha} and Proposition \ref{var-ge-alpha}.
\wwtbp

%
\begin{cor}
With same hypothesis as in Theorem \ref{cut-off-ge_alpha}, there exist $\tilde{C} >0$ and $n_0 \in \mathbb{N} $ such that for all  $ r > 0$, $0< r' < 1 $ and for all $ n \ge n_0$,
 \bq
   \lVe \mathcal{L}\lt(X_n({(1+r){\frac{2}{n} \int_0^{L/2} \frac{f(s)}{f'(s)} ds }}) \rt) - \mathcal{U}_n \rVe_{\mathrm{tv}} &\le &\frac{\tilde{C}}{r^2 n^{\frac{1-\alpha}{1+\alpha}}}\\
\fo y \in M_f^n,\qquad    P^{(n)}_{(1+r)\frac{2}{n} \int_0^{L/2} \frac{f(s)}{f'(s)} ds} ( \tilde{0},y)&\ge &\lt(1- \frac{\tilde{C}}{r^2 n^{\frac{1-\alpha}{1+\alpha}}} \rt) \frac{1}{\Vol(M_f^n )}\\
\inf_{ y \in M_f^n}   P^{(n)}_{(1-r')\frac{2}{n} \int_0^{L/2} \frac{f(s)}{f'(s)} ds} ( \tilde{0},y)&\le &\lt( \frac{\tilde{C}}{r'^2 n^{\frac{1-\alpha}{1+\alpha}}} \rt) \frac{1}{\Vol(M_f^n )}
 \eq
 \end{cor}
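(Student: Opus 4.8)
The strategy is to reproduce the proof of Corollary~\ref{cor_ker} verbatim, with Propositions~\ref{mean-ge-alpha} and~\ref{var-ge-alpha} replacing Propositions~\ref{mean-ge} and~\ref{var-ge}. Write $a_n\df\frac2n\int_0^{L/2}\frac{f(s)}{f'(s)}\,ds$; since $f/f'$ is integrable near $L/2$ when $\alpha\in(0,1)$ we have $a_n\asymp 1/n$. Two ingredients are needed: $(\mathrm{i})$ $\EE[\tau_n]=a_n(1+o(1))$, which is Proposition~\ref{mean-ge-alpha}; and $(\mathrm{ii})$ a quantitative version of Proposition~\ref{var-ge-alpha}, namely $\Var(\tau_n)=O(n^{-(3+\alpha)/(1+\alpha)})$ with a constant independent of $n$. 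For $(\mathrm{ii})$ one re-reads that proof with the cut-off parameter $A$ fixed large enough: among the three pieces $A_n$, $B_n$, $C_n$ of~\eqref{**_ge_k1_alpha0}--\eqref{**_ge_k1_alpha} the dominant one is $C_n=O(n^{-(1+2/(1+\alpha))})=O(n^{-(3+\alpha)/(1+\alpha)})$, while $A_n=o(n^{-(3+\alpha)/(1+\alpha)})$ and $B_n=o(n^{-(3+\alpha)/(1+\alpha)})$ because $0<\alpha<1$. Since $(3+\alpha)/(1+\alpha)=2+\frac{1-\alpha}{1+\alpha}$ and $a_n^2\asymp n^{-2}$, this gives $\Var(\tau_n)/a_n^2=O(n^{-(1-\alpha)/(1+\alpha)})$.

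Next, the Bienaym\'e--Tchebychev inequality gives, for $n\ge n_0$ and every $r>0$,
\begin{equation*}
\PP[\tau_n>(1+r)a_n]\ \le\ \frac{\Var(\tau_n)}{\bigl((1+r)a_n-\EE[\tau_n]\bigr)^2}.
\end{equation*}
If $\tilde C\,r^{-2}n^{-(1-\alpha)/(1+\alpha)}\ge 1$ the assertion is trivial (total variation distances, as well as $\Vol(M_f^n)P^{(n)}_t(\wi 0,\cdot)$ and its infimum, are $\le 1$); otherwise $r\gtrsim n^{-(1-\alpha)/(2(1+\alpha))}$, and upgrading Proposition~\ref{mean-ge-alpha} to $\EE[\tau_n]-a_n=o(a_n\,n^{-(1-\alpha)/(2(1+\alpha))})$ — obtained by letting the truncation level $A$ in its proof grow as a small power of $n$, so that both the relative error in~\eqref{J_unif_alpha} and the discarded tail $\int_{L/2-A/n^{1/(1+\alpha)}}^{L/2}f/f'$ become $o(n^{-(1-\alpha)/(2(1+\alpha))})$ — one gets $|(1+r)a_n-\EE[\tau_n]|\ge \frac12 r a_n$, hence $\PP[\tau_n>(1+r)a_n]\le 4\Var(\tau_n)/(r^2a_n^2)\le\tilde C\,r^{-2}n^{-(1-\alpha)/(1+\alpha)}$. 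The same dichotomy applied to $(1-r')a_n$ gives $\PP[\tau_n\le(1-r')a_n]\le\tilde C\,r'^{-2}n^{-(1-\alpha)/(1+\alpha)}$ for $0<r'<1$, $n\ge n_0$, with $n_0$ depending only on $\tilde C$ and not on $r,r'$.

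It remains to convert these tail bounds into the three stated inequalities, exactly as at the end of the proof of Corollary~\ref{cor_ker}. Since $\tau_n$ is a sharp strong stationary time for $X_n$, $\fs(\cL(X_n(t)),\cU_n)=\PP[\tau_n>t]$, so with $\lVe\mu-\nu\rVe_{\mathrm{tv}}\le\fs(\mu,\nu)$ and $t=(1+r)a_n$ we obtain the first inequality. For the heat kernel statements, $\frac{d\cL(X_n(t))}{d\cU_n}(y)=\Vol(M_f^n)\,P^{(n)}_t(\wi 0,y)$, and by smoothness of the heat kernel the essential supremum in $\fs(\cL(X_n(t)),\cU_n)=\esssup_{\cU_n}(1-\frac{d\cL(X_n(t))}{d\cU_n})$ is a genuine supremum; hence $1-\Vol(M_f^n)P^{(n)}_t(\wi 0,y)\le\PP[\tau_n>t]$ for every $y$ (giving the second inequality at $t=(1+r)a_n$), and $\Vol(M_f^n)\inf_y P^{(n)}_t(\wi 0,y)=1-\PP[\tau_n>t]=\PP[\tau_n\le t]$ (giving the third at $t=(1-r')a_n$).

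The main obstacle will be the quantitative sharpening of the two propositions. For the variance one must identify which of $A_n,B_n,C_n$ dominates and with exactly which power of $n$, rather than stopping at $o(n^{-2})$ as in Proposition~\ref{var-ge-alpha}; this is bookkeeping within the estimates already carried out there. More delicate is the polynomial rate for $\EE[\tau_n]-a_n$, needed to keep the Chebyshev denominator $\gtrsim r^2a_n^2$ uniformly in $r$: the bare statement of Proposition~\ref{mean-ge-alpha} only gives $o(1)$ for $\EE[\tau_n]/a_n-1$, so one has to re-run its proof choosing the cut-off $A$ as a power of $n$ and optimising it against the tail of $\int_0^{L/2}f/f'$ near $L/2$ and against the term $B_n$. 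Everything downstream is then identical to Corollary~\ref{cor_ker}.
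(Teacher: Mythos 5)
Your proposal is correct and follows essentially the paper's own route: the paper likewise observes that the dominant term in the proof of Proposition \ref{var-ge-alpha} is $C_n$, yielding $\Var(\tau_n)=O\bigl(n^{-1-\frac{2}{1+\alpha}}\bigr)=O\bigl(n^{-2-\frac{1-\alpha}{1+\alpha}}\bigr)$, and then repeats verbatim the Chebyshev and separation/heat-kernel argument of Corollary \ref{cor_ker}. The uniformity in small $r,r'$ that you treat via the trivial-bound dichotomy and a quantitative sharpening of Proposition \ref{mean-ge-alpha} (letting $A$ grow with $n$) is left implicit in the paper; note also that on the upper-tail side no sharpening is needed, since $J_n(s)\le \frac{f(s)}{nf'(s)}$ on $[0,L/2)$ already gives $\EE[\tau_n]\le a_n$, so the refined mean estimate is really only required for the third inequality.
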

\begin{proof}
  In the proof of Proposition \ref{var-ge-alpha}  we have in fact (since the dominant term is $C_n$) $$\Var(\tau_n) =  O\lt(\frac{1}{n^{1+\frac{2}{1+\alpha}}}\rt). $$
The result follows with the same proof as the proof of Corollary  \ref{cor_ker}.
\end{proof}
\begin{theo}\label{cut-off-ge_alpha2}
Let $f $ be a $C^2$ function on $[0, L] \setminus \{L/2\}$ and $ C^1$ on $[0, L]$  satisfying Assumptions \eqref{H_f} and \eqref{H_f2}. Assume that  for some $ \alpha >1$ and $C >0$, we have for all $\vert h\vert >0$ small enough,
\bq
f''(L/2- h)&=& -C \vert h \vert^{\alpha -1} + o(\vert h \vert^{\alpha -1}).  \eq
Let  $X_n\df(X_n(t))_{t\geq 0}$ be the Brownian motion described in Definition \ref{BM}.
Then the family of diffusion processes $(X_n)_{n\in\NN\setminus\{1\}}$ has  no cut-off in separation. 
\end{theo}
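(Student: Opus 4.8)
The plan is to run the argument of the proof of Theorem~\ref{nocut} in the present setting. Its only substantial input is a non-degeneracy estimate of the type of Proposition~\ref{calvar}: once one knows
\[
\liminf_{n\to\infty}\frac{\Var(\tau_n)}{\EE[\tau_n]^2}\ >\ 0,
\]
the rest goes through verbatim. Indeed, $\tau_n/\EE[\tau_n]$ is uniformly integrable because $\EE[\tau_n^k]=k!\,G_n^{\circ k}[\un](0)\le k!\,\EE[\tau_n]^k$ (Propositions~\ref{Poisson} and~\ref{mean-var}), so if $(X_n)$ had a cut-off in separation with mixing times $(a_n)$, then $\tau_n/a_n\to 1$ in probability as in the proof of Theorem~\ref{nocut}, Slutsky's theorem would give $a_n/\EE[\tau_n]\to 1$ and hence $\tau_n/\EE[\tau_n]\to 1$ in $\mathbb{L}^1$, whence also $\tau_n^2/\EE[\tau_n]^2\to 1$ in $\mathbb{L}^1$, i.e.\ $\Var(\tau_n)/\EE[\tau_n]^2\to 0$, contradicting the display above. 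So the whole task is to prove that lower bound under the assumption $f''(L/2-h)=-C|h|^{\alpha-1}+o(|h|^{\alpha-1})$ with real $\alpha>1$; this contains the case $\alpha=2k-1$, $k\ge2$, already covered by Proposition~\ref{calvar}.

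First I would transpose the Laplace-type asymptotics of Propositions~\ref{mean-ge-alpha} and~\ref{prop-mean} to a general real exponent $\alpha>1$. The relevant scale near $L/2$ is $n^{-1/(1+\alpha)}$: writing $s=L/2+a\,n^{-1/(1+\alpha)}$ one obtains, uniformly for $a$ in compact sets and exactly as in~\eqref{lemme_alpha},
\[
I_n\!\lt(\tfrac{L}{2}+\tfrac{a}{n^{1/(1+\alpha)}}\rt)\ \sim\ \frac{f^{n-1}(L/2)}{n^{1/(1+\alpha)}}\,h_{\alpha}(a),
\]
with $C_{\alpha}=C/(\alpha(\alpha+1))$ and $h_{\alpha}(a)=\int_{-\infty}^{a}e^{-C_{\alpha}|h|^{1+\alpha}/f(L/2)}\,dh$ as in Proposition~\ref{mean-ge-alpha}, and in particular $I_n(L)=2I_n(L/2)\sim 2\,C(\alpha,f)\,f^{n-1}(L/2)\,n^{-1/(1+\alpha)}$ as in~\eqref{C11}. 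Splitting the integrals representing $\EE[\tau_n]$ and $\Var(\tau_n)$ (in the form used in the proof of Proposition~\ref{calvar}) into a bulk part over $[0,L/2-A/n^{1/(1+\alpha)}]$, a boundary-layer part over $[L/2-A/n^{1/(1+\alpha)},\,L/2+A/n^{1/(1+\alpha)}]$ and its mirror image, the new feature compared with the case $\alpha\in(0,1)$ is that $f(s)/f'(s)\sim c\,|L/2-s|^{-\alpha}$ is no longer integrable at $L/2$; but $\int_0^{L/2-\delta}|L/2-s|^{-\alpha}\,ds\asymp\delta^{1-\alpha}$, so the bulk contributions are only $O(A^{-(\alpha-1)})$ times the dominant order and disappear after letting $n\to\infty$ and then $A\to\infty$. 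This yields $\EE[\tau_n]\sim c_{\alpha}\,n^{-2/(1+\alpha)}$ and $\EE[\tau_n]^2 I_n(L)^2\sim c_{\alpha}'\,f^{2n-2}(L/2)\,n^{-6/(1+\alpha)}$ with $c_{\alpha},c_{\alpha}'>0$ explicit; the finiteness of the limiting constants (the analogue of $C(2k)$ in~\eqref{C1}, namely $\int_0^{\infty}h_{\alpha}(a)h_{\alpha}(-a)e^{C_{\alpha}a^{1+\alpha}/f(L/2)}\,da$) is precisely where $\alpha>1$ enters, since that integrand decays like $|a|^{-\alpha}$ at infinity.

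For the variance I would start from the identity extracted in the proof of Proposition~\ref{calvar},
\[
\Var(\tau_n)=\frac{2}{I_n(L)^2}\int_0^L\frac{(I_n(L)-I_n(s))^2}{f^{n-1}(s)}\lt(\int_0^s\frac{I_n^2(t)}{f^{n-1}(t)}\,dt\rt)ds,
\]
and retain only the non-negative ``double boundary-layer'' piece $B_n^{(2)}$, where both $s$ and $t$ lie in $[L/2-A/n^{1/(1+\alpha)},\,L/2+A/n^{1/(1+\alpha)}]$ with $t\le s$. The scaling above gives $B_n^{(2)}/(\EE[\tau_n]^2 I_n(L)^2)\to C(A)/c_{\alpha}'$ with
\[
C(A)=\int_{-A}^{A}h_{\alpha}^2(-a)\,e^{\frac{C_{\alpha}|a|^{1+\alpha}}{f(L/2)}}\lt(\int_{-A}^{a}h_{\alpha}^2(\tilde a)\,e^{\frac{C_{\alpha}|\tilde a|^{1+\alpha}}{f(L/2)}}\,d\tilde a\rt)da\ >\ 0,
\]
and, since every one of the six pieces in the decomposition of the double integral is the integral of a non-negative function, $\Var(\tau_n)\ge \frac{2}{I_n(L)^2}B_n^{(2)}$, so $\liminf_n\Var(\tau_n)/\EE[\tau_n]^2\ge 2\,C(A)/c_{\alpha}'>0$ for any fixed $A$. (Letting $A\to\infty$ in fact produces the full analogue of Proposition~\ref{calvar}: $\EE[\tau_n^2]/\EE[\tau_n]^2$ converges to $1$ plus a strictly positive explicit quantity; but the lower bound is all that is needed.) Feeding this into the scheme of Theorem~\ref{nocut} completes the proof.

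The main difficulty is purely technical: performing the Laplace asymptotics with a non-integer power $1+\alpha$ and, above all, controlling all the bulk remainders ($A_n$, $C_n^{(1)}$, $C_n^{(3)}$, etc., in the notation of Proposition~\ref{calvar}) uniformly in $A$ and $n$ --- each must be shown to be $O(A^{-\delta})$ relative to $\EE[\tau_n]^2 I_n(L)^2$ for some $\delta>0$, the exponents $6k-4$, $12k-6$, \dots\ of Proposition~\ref{calvar} becoming $3\alpha-1$, $6\alpha$, \dots, which remain positive since $\alpha>1$. No new idea is needed: it is Propositions~\ref{prop-mean}, \ref{calvar}, \ref{mean-ge-alpha} and~\ref{var-ge-alpha} re-run with $2k$ replaced by $1+\alpha$ and the Gaussian weights by $e^{-C_{\alpha}|h|^{1+\alpha}/f(L/2)}$, the only genuinely new point being the finiteness of the limiting constants, which holds exactly in the regime $\alpha>1$.
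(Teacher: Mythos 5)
Your proposal is correct and follows essentially the same route as the paper's own (very condensed) proof: redo the Laplace asymptotics of Propositions \ref{mean-ge-alpha} and \ref{calvar} at the scale $n^{-1/(1+\alpha)}$, identify the double boundary-layer term $B_n^{(2)}$ as producing a variance contribution of order $\EE[\tau_n]^2$, and conclude by the contradiction argument of Theorem \ref{nocut}; your observation that, by nonnegativity of all pieces of the decomposition, only the lower bound $\Var(\tau_n)\ge 2B_n^{(2)}/I_n(L)^2$ is required is a mild simplification of the paper's claim that $\Var(\tau_n)/2$ is asymptotically equivalent to $B_n^{(2)}/I_n(L)^2$. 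Your rate $\EE[\tau_n]\sim c_\alpha\, n^{-2/(1+\alpha)}$ is the one consistent with the hypothesis $f''(L/2-h)=-C|h|^{\alpha-1}+o(|h|^{\alpha-1})$ as stated in the theorem (the exponent $n^{-2/(2+\alpha)}$ appearing in the paper's proof corresponds to the shifted parametrization of Theorem \ref{theo1}).
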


\proof
Following the proof of Proposition \ref{mean-ge-alpha}, 
we  show that
\bq\EE[\tau_n] &\sim &\frac{C_\alpha(f)}{n^{2/(2+\alpha)}} .\eq
Following the proof of Proposition \ref{calvar}, we  show that $\Var(\tau_n)/2$ is
 equivalent for $n$ large to $\frac{B_n^{(2)}}{I_n(L)^2} $ with the same decompositions as introduced there. It follows that
 $\Var(\tau_n)/\EE[\tau_n]^2$ converges toward a positive constant and we conclude as in 
  Theorem \ref{nocut}. 
\wwtbp

To end the paper, let us give the
\prooff{Proof of Theorem \ref{theo1}}
The items of Theorem \ref{theo1} correspond respectively to 
 Theorem \ref{cut-off-ge_alpha}, 
 Theorem \ref{cut-off-ge} and 
Theorem \ref{cut-off-ge_alpha2}.
\wwtbp

\vskip2cm
\hskip70mm
\vbox{
\copy4
 \vskip5mm
 \copy5
  \vskip5mm
 \copy6
}

\end{document}